\definecolor{BLUE}{rgb}{0.0,0.0,1.0}
\definecolor{RED}{rgb}{1.0,0.0,0.0}
\definecolor{CYAN}{rgb}{0.0,1.0,1.0}
\definecolor{DGREEN}{rgb}{0.0,0.5,0.0}
\definecolor{DORANGE}{rgb}{1.0,0.5,0.0}
\definecolor{purple}{rgb}{0.9,0,0.8}
\newcommand\R{{\mathbb R}}
\newcommand{\reals}{\R}
\newcommand{\abbr}[1]{{\sc\lowercase{#1}}}
\newcommand{\abbrs}[1]{{\sc \footnotesize \lowercase{#1}}}
\theoremstyle{plain}
\newcommand{\E}{\mathbf{E}}
\newtheorem{thm}{Theorem}[section]
\newtheorem{lem}[thm]{Lemma}
\newtheorem{prop}[thm]{Proposition}
\newtheorem{defn}[thm]{Definition}
\newtheorem{Remark-lbl}[thm]{Remark}
\newcommand{\eqnsection}{\renewcommand{\theequation}{\thesection.\arabic{equation}}
      \makeatletter \csname @addtoreset\endcsname{equation}{section}\makeatother}
\DeclareMathOperator{\var}{Var}
\renewcommand{\P}{\mathbf{P}}
\newcommand{\one}{\mathbf{1}}
\newcommand{\wt}{\widetilde}
\newcommand{\ul}{\underline}
\newcommand{\ol}{\overline}
\newcommand{\CF}{\mathcal {F}}
\newcommand{\CH}{\mathcal {H}}
\newcommand{\CG}{\mathcal {G}}
\newcommand{\CL}{\mathcal {L}}
\newcommand{\CU}{\mathcal {U}}
\newcommand{\CZ}{\mathcal {Z}}
\newcommand{\SA}{\mathsf {A}}
\newcommand{\SB}{\mathsf {B}}
\newcommand{\SC}{\mathsf {C}}
\newcommand{\SH}{\mathsf {H}}
\newcommand{\sh}{\mathsf {h}}
\newcommand{\SG}{\mathsf {G}}
\newcommand{\SJ}{\mathsf {J}}
\newcommand{\SK}{\mathsf {K}}
\newcommand{\SN}{\mathsf {N}}
\newcommand{\SQ}{\mathsf {Q}}
\newcommand{\SW}{\mathsf {W}}
\newcommand{\wh}{\widehat}
\newcommand{\p}{\mathbf{P}}
\newcommand{\Q}{\mathbf{Q}}
\newcommand{\Z}{\mathbb{Z}}
\newcommand{\N}{\mathbb{N}}
\newcommand{\tmix}{t_{\rm mix}}
\newcommand{\thit}{t_{\rm hit}}
\newcommand{\tcov}{t_{\rm cov}}
\newcommand{\tcovp}{t_{\rm cov}^{\square}}
\newcommand{\Green}{G}
\newcommand{\ball}{\SB}
\newcommand{\cyl}{\SC}
\newcommand{\ttorus}{\SG_n(a)}
\newcommand{\loc}{\CL}
\newcommand{\oloc}{\ol{\loc}}
\newcommand{\hloc}{\wh{\loc}}
\newcommand{\nballE}{\SN \ball}
\newcommand{\ballE}{\ol{\nballE}}
\newcommand{\ncylE}{\SN \cyl}
\newcommand{\cylE}{\ol{\SN \cyl}}
\newcommand{\TV}{{\rm TV}}
\newcommand{\giv}{\,|\, }
\newcommand{\Wpq}{W_{\textsc{p-q}}}
\newcommand{\twoD}{2\mathrm{D}}
\newcommand{\threeD}{3\mathrm{D}}
\begin{document}
\title[Cut-off for lamplighter chains on tori: dimension interpolation]{Cut-off for lamplighter chains on tori:\\dimension interpolation and Phase transition}

\author[A.\ Dembo]{Amir Dembo$^\star$}
\author[J.\ Ding]{Jian Ding$^\dagger$}
\author[J.\ Miller]{Jason Miller$^\ddagger$}
\author[Y.\ Peres]{Yuval Peres$^\S$}

\address{$^\star$Department of Mathematics, Stanford University
\newline\indent Building 380, Sloan Hall, Stanford, California 94305}

\address{$^\dagger$Department of Statistics, University of Pennsylvania, Philadelphia, PA 19104
\newline\indent }

\address{$^\ddagger$Statistical Laboratory, University of Cambridge
\newline\indent Centre for Mathematical Sciences, Cambridge CB3 0WB, United Kingdom}

\address{$^{*\dagger\S}$Microsoft Research, Redmond
\newline\indent }

\date{\today}

\subjclass[2010]{60J10, 60D05, 37A25}

\keywords{wreath product, lamplighter walk, mixing time, cutoff, 
uncovered set.}

\thanks{$^{*}$Research partially supported by NSF grants
DMS-1106627 and DMS-1613091.}
\thanks{$^{\dagger}$Research partially supported by NSF grant DMS-1313596, DMS-1757479 and an Alfred
Sloan fellowship.}
\thanks{$^{\ddagger}$Research partially supported by NSF grant DMS-1204894.}
\thanks{Part of the work was done when A.D. and J.D. participated  
in the MSRI program on Random Spatial Processes.}

\begin{abstract}
Given a finite, connected graph $\SG$, the lamplighter chain 
on $\SG$ is the lazy random walk $X^\diamond$ on the associated
lamplighter graph $\SG^\diamond=\Z_2 \wr \SG$. 
The mixing time of the lamplighter chain on the torus $\Z_n^d$ is known to have a cutoff at a time asymptotic to the cover time of $\Z_n^d$ if $d=2$, and to half the cover time if $d \ge 3$.  We show that the mixing time of the lamplighter chain on $\ttorus=\Z_n^2 \times \Z_{a \log n}$ has a cutoff at $\psi(a)$ times the cover time of $\ttorus$ as $n \to \infty$, where $\psi$ is an explicit weakly decreasing map from $(0,\infty)$ onto $[1/2,1)$. In particular, as $a > 0$ varies, the threshold continuously interpolates between the known thresholds for $\Z_n^2$ and $\Z_n^3$. Perhaps surprisingly, we find a phase transition 
(non-smoothness of $\psi$) at the point $a_*=\pi r_3 (1+\sqrt{2})$,
where high dimensional behavior ($\psi(a)=1/2$ for all $a \ge a_*$) 
commences. Here $r_3$ is the effective resistance from $0$ to $\infty$ in $\Z^3$.
\end{abstract}

\maketitle

\section{Introduction}
\label{sec::intro}

\subsection{Setup}
\label{subsec::setup}

Suppose that $\SG$ is a finite, connected graph with vertices $V(\SG)$ and edges $E(\SG)$, respectively. Each vertex $(\ul f,x)$ of the 
\emph{wreath product} $\SG^\diamond = \Z_2 \wr \SG$ 
consists of a $\{0,1\}$-labeling $\ul f$ of $V(\SG)$ 
and $x \in V(\SG)$.
There is an edge between $(\ul f,x)$ and $(\ul g,y)$ 
if and only if $\{x,y\} \in E(\SG)$ and $f_z = g_z$ for all $z \notin \{x,y\}$.  
Recall that the transition kernel of the \emph{lazy random walk} $X$ on $\SG$ is 
\begin{equation}
\label{eqn::lazy_rw_definition}
P(x,y) := \p_x[X_1 = y] = \begin{cases} \frac{1}{2} \quad&\text{if}\quad x = y,\\ \frac{1}{2d(x)} \quad&\text{if}\quad \{x,y\} \in E(\SG), \end{cases}
\end{equation}
where $d(x)$ is the degree of $x \in V(\SG)$ and $\p_x$ denotes the law under which $X_0 = x$. The \emph{lamplighter chain} $X^\diamond$ is the lazy random 
walk on $\SG^\diamond$. Explicitly, it moves from $(\ul f,x)$ by
\begin{enumerate}
\item picking $y$ adjacent to $x$ in $\SG$ according to $P$, then
\item if $y \neq x$, updating each of the values of $f_x$ and $f_y$ independently according to the uniform measure on $\Z_2$ (with $f_z$ unchanged 
for all $z \notin \{x,y\}$).
\end{enumerate}
We refer to $f_x$ as the state of the lamp at $x$. 
If $f_x = 1$ (resp.\ $f_x = 0$) we say that the lamp at $x$ is on (resp.\ off); this is the source of the name ``lamplighter.''  Note that the projection of $X^\diamond$ to $\SG$ evolves as a lazy random walk on $\SG$.  It is easy to see that the unique stationary distribution of $X^\diamond$ is given by the 
product of the (unique) stationary distribution of $P(\cdot,\cdot)$ and the uniform measure over the $\{0,1\}$-labelings of $V(\SG)$.  See Figure~\ref{fig:lamplighter} for an illustration of the lamplighter chain.

The purpose of this work is to determine the asymptotics of the total variation mixing time of the lamplighter chain on a particular sequence of graphs.  In order to state our main results precisely and put them into context, we will first review some basic terminology from the theory of Markov chains.  Suppose that $\mu,\nu$ are measures on a finite probability space.  The \emph{total variation distance} between $\mu,\nu$ is given by
\begin{equation}
\label{eqn::tv_definition}
 \| \mu - \nu \|_{\TV} = \max_{A} |\mu(A) - \nu(A)| = \frac{1}{2} \sum_{x} |\mu(x) - \nu(x)|.
\end{equation}
The $\delta$-\emph{total variation mixing time} of a transition kernel $Q$ on a graph $\SH$ with stationary distribution $\pi(\cdot)$ is given by
\begin{equation}
\label{eqn::tmix_definition}
 \tmix(\SH, \delta) = \min\left\{ t \geq 0: \max_{x \in V(\SH)} \|Q^t(x,\cdot) - \pi(\cdot)\|_{\TV} \leq \delta \right\}.
\end{equation}
Throughout, we let $\tmix(\SH) = \tmix(\SH,\tfrac{1}{2e})$.  Lazy random walk 
$\wh{X}$ on a family of graphs $(\SH_n)$ is said to exhibit \emph{cutoff} if
\begin{equation}
\label{eqn::cutoff}
\lim_{n \to \infty} \frac{\tmix(\SH_n,\delta)}{\tmix(\SH_n,1-\delta)} = 1\quad\quad\text{for all}\quad\quad \delta > 0.
\end{equation}
For each $x \in V(\SH)$ let $\tau_x = \min\{k \geq 0 : \wh{X}_k = x\}$ be the hitting time of $x$.  With $\E_x$ the expectation associated with $\p_x$, the \emph{maximal hitting time} of $\SH$ is given by
\[ \thit = \thit(\SH) = \max_{x,y \in V(\SH)} \E_y[\tau_x]\]
and the \emph{cover time} of $\SH$ is
\[ \tcov =\tcov(\SH) = \max_{y \in V(\SH)} \E_y\left[ \max_{x \in V(\SH)} \tau_x \right].\]

\subsection{Related work}

The mixing time of $\SG^\diamond$ was first studied by H\"aggstr\"om and Jonasson in \cite{HJ97} in the case of the complete graph $\SK_n$ and the one-dimensional cycle $\Z_n$.  Their work implies a total variation cutoff with threshold $\tfrac{1}{2} \tcov(\SK_n)$ in the former case and that there is no cutoff in the latter.  The connection between $\tmix(\SG^\diamond)$ and $\tcov(\SG)$ is explored further in \cite{PR} (see also the account given in \cite[Chapter~19]{LPW}), in addition to developing the relationship between $\thit(\SG)$ and the relaxation time (i.e., inverse spectral gap) of $\SG^\diamond$, and
the relationship between exponential moments of
the size of
the uncovered set $\CU(t)$ of $\SG$ at time $t$ and
the uniform, i.e., $\ell_\infty$-mixing time
of $\SG^\diamond$.
In particular, it is shown in \cite[Theorem~1.3]{PR} that if $(\SG_n)$ is a sequence of graphs with $|V(\SG_n)| \to \infty$ and $\thit(\SG_n) = o(\tcov(\SG_n))$ then
\begin{equation}
\label{eqn::pr_bounds}
 \frac{1}{2}(1+o(1)) \tcov(\SG_n) \leq \tmix(\SG_n^\diamond) \leq (1+o(1))\tcov(\SG_n) \quad\text{as}\quad n \to \infty.
\end{equation}
Related bounds on the order of magnitude of the uniform mixing time and the relaxation with generalized lamps were obtained respectively in \cite{KMP11} and \cite{KP12}.

By combining the results of \cite{DPRZ-late} and \cite{ALD-cover}, it is observed in \cite{PR} that $\tmix((\Z_n^2)^\diamond)$ has a threshold at $\tcov(\Z_n^2)$. Thus,~\eqref{eqn::pr_bounds}
gives the best \emph{universal} bounds,
since $\SK_n$ attains the lower bound and $\Z_n^2$ attains the upper bound.  In \cite{Miller-Peres}, it is shown that $\tmix((\Z_n^d)^\diamond) \sim \tfrac{1}{2} \tcov(\Z_n^d)$ when $d \geq 3$ and more generally that $\tmix(\SG_n^\diamond) \sim \tfrac{1}{2} \tcov(\SG_n)$ whenever $(\SG_n)$ is a sequence of graphs with $|V(\SG_n)| \to \infty$ satisfying certain uniform local transience assumptions.  This prompted the question \cite[Section~7]{Miller-Peres} of whether for each $\gamma \in (\tfrac{1}{2},1)$ there exists a (natural) family of graphs $(\SG_n)$ such that $\tmix(\SG_n^\diamond) \sim \gamma \tcov(\SG_n)$ as $n \to \infty$.  In this work we give an affirmative answer to this question by analyzing the lamplighter chain on a thin $3$D torus.

Cutoff for lazy random walks on $\SG_n^\diamond$ 
is further examined in \cite{DKN-fractal} for a large 
class of fractal graphs $\SG_n$. They show that cutoff 
never occurs for strongly recurrent $\SG_n$ 
(namely of spectral dimension $d_s<2$), while the 
sufficient conditions of \cite{Miller-Peres} for cutoff 
at $\frac{1}{2}\tcov(\SG_n)$, apply for transient $\SG_n$ 
(i.e. having $d_s>2$). However, such universality seem 
to not hold in the setting of $d_s=2$, namely for 
the fractal analog of the $2$D and thin $3$D torus 
considered here.

\begin{figure}[ht!]
\includegraphics[width=0.27\textwidth]{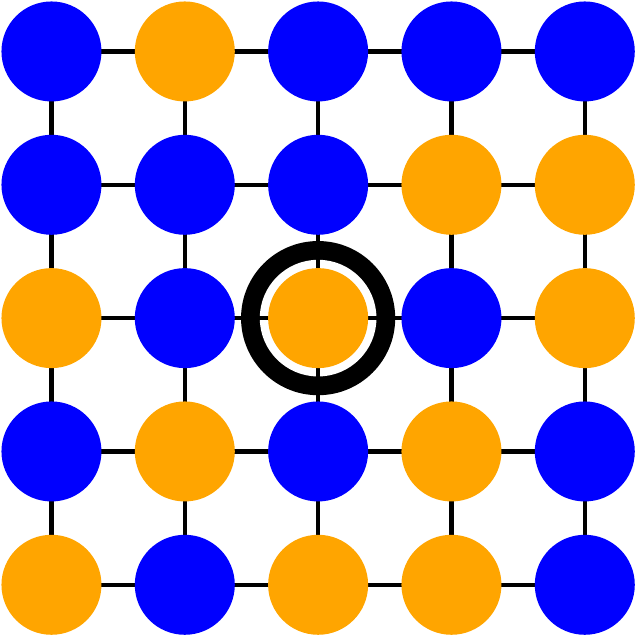}
\caption{\label{fig:lamplighter} Shown is a lamplighter configuration on $\Z_5^2$ (without the wraparound edges).  The state of the lamps is indicated by the colors.  The circle gives the position of the underlying random walker.}
\end{figure}

\subsection{Main results}
\label{subsec::main_results}

Fix $a > 0$.  We consider the mixing time for the \abbr{srw} $X^\diamond_k$,
$k \in \N$, on the
lamplighter graph $(\ttorus)^\diamond$ for the $3$D thin tori $\ttorus= (V_n,E_n) = \Z_n^2 \times \Z_h$
of size $n \times n\times h$, where $h = [a\log n]$.
From the main result of \cite{DPRZ-cover} we know
that the cover time of the $2$D projection of
\abbr{srw} on $\ttorus$ to $\Z_n^2$ is given by
\[
\tcovp :=\frac{3}{2} \tcov( \Z_n^2)  \quad\quad\text{where}\quad\quad
\tcov(\Z_n^2) := \frac{4}{\pi} n^2 (\log n)^2(1+o(1))
\]
(where the factor $\tfrac{3}{2}$ is due to the lazy steps
of walk in the $h$-direction, which occur with probability~$\tfrac{1}{3}$).   Let
\begin{equation}
\label{eqn::phi_def}
\phi := \pi r_3 a
\end{equation} where $r_3$ denotes the resistance $0 \leftrightarrow \infty$ for the \abbr{srw} in $\Z^3$. That is,
\begin{equation}
\label{eqn::r3_def}
r_3= \frac{1}{6 q} \quad \text{where}  \quad q = \p_0[T_0 = \infty] ,
\end{equation}
and $T_0$ denotes the return time to zero by \abbr{srw} in $\Z^3$
(see \cite[Proposition~9.5]{LPW}
for the relation~\eqref{eqn::r3_def} and an explicit formula for $q$).
In Section~\ref{sec:cover}, we use the recent development
which relates cover time with the extremes of Gaussian fields, see \cite{Ding11}, to establish the following theorem.
\begin{thm}
\label{thm-cover-time}
The cover time $\tcov(a, n)$ of $\ttorus$ by \abbr{srw} is given by
\[ \tcov(a, n) = (1+o(1)) C(a,n), \quad \mbox{ as } n\to \infty\]
where
\begin{equation}\label{eq:cover-time}
C(a,n) := (1 + 2 \phi) \tcovp
\end{equation}
and $\phi$ is as in~\eqref{eqn::phi_def}.
\end{thm}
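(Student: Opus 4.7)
The plan is to invoke the Gaussian free field (GFF) representation for cover times from~\cite{Ding11}, which reduces the proof to computing the leading-order behavior of $\E[\max_v \eta^{\ttorus}_v]$, where $\eta^{\ttorus}$ is the discrete GFF on $\ttorus$ pinned at an arbitrary vertex $o$. Ding's theorem asserts that $\tcov(\ttorus)$ is asymptotic to an explicit multiple of $|E(\ttorus)|\,(\E\max \eta^{\ttorus})^2$, with $|E(\ttorus)| = 3n^2h$; the multiplicative constant is fixed by matching the known $d=2$ case, so the task becomes one about extrema of a specific Gaussian field.

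The first ingredient is the variance $\text{Var}(\eta^{\ttorus}_v) = R^{\ttorus}_{\mathrm{eff}}(v,o)$ (up to the chosen GFF normalization). For $v$ at 2D distance of order $n$ from $o$, decompose the current flow into three stages: a local 3D ``escape'' from $v$ to the boundary of a 3D ball of radius $\sim h$, contributing $r_3(1+o(1))$; a bulk 2D traversal through the $h$-fold parallel bundle of $z$-layers (which effectively short-circuits at 2D scales $\gg h$), contributing $\frac{1}{\pi h}\log n\,(1+o(1))$; and a symmetric local 3D piece near $o$, contributing another $r_3(1+o(1))$. Summing,
\begin{equation*}
 R^{\ttorus}_{\mathrm{eff}}(v,o) = 2r_3 + \frac{\log n}{\pi h} + o(1) = \sigma_*^2 + o(1), \qquad \sigma_*^2 := \frac{1+2\phi}{\pi a},
\end{equation*}
using $h = [a\log n]$ and $\phi = \pi r_3 a$. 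An analogous decomposition gives $R^{\ttorus}_{\mathrm{eff}}(v,w) = 2 r_3 + \frac{1}{\pi h}\log(r/h) + o(1)$ for $v,w$ at 2D separation $h \ll r \ll n$; in particular $\text{Var}(\eta^{\ttorus}_v - \eta^{\ttorus}_w) \le 2\sigma_*^2(1+o(1))$ for all $v,w$.

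The main technical obstacle is to establish
\begin{equation*}
 \E[\max_v \eta^{\ttorus}_v] = (1+o(1))\sqrt{2\sigma_*^2 \log(n^2 h)} = (1+o(1)) \cdot 2\sqrt{\frac{(1+2\phi)\log n}{\pi a}},
\end{equation*}
namely the same leading order as for $N := n^2 h$ iid centered Gaussians of variance $\sigma_*^2$. The upper bound follows from a union bound using $\max_v \text{Var}(\eta^{\ttorus}_v) \le \sigma_*^2(1+o(1))$. For the matching lower bound, the field admits an (approximate) orthogonal decomposition $\eta^{\ttorus}_v = \eta^{(2)}_{x_v} + \eta^{(3)}_v$, where $\eta^{(2)}$ is the $z$-averaged GFF---a 2D GFF-like field on $\Z_n^2$ of variance $\approx 1/(\pi a)$, log-correlated with coefficient $1/(\pi h)$---and $\eta^{(3)}$ is the columnwise-mean-zero fluctuation, a 3D-like field of variance $\approx 2 r_3$ with unit-scale decorrelation. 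Standard 2D GFF extreme-value theory yields $\max \eta^{(2)} \sim 2\sqrt{\log n/(\pi a)}$, while rapid decorrelation of $\eta^{(3)}$ across the $N$ sites gives $\max \eta^{(3)} \sim 2\sqrt{2r_3\log n}$. The Pythagorean combination $(2\sqrt{\log n/(\pi a)})^2 + (2\sqrt{2r_3\log n})^2 = 4\sigma_*^2\log n$ is then realized via a second-moment argument on a carefully chosen subset of 3D points---thinned in the $z$-direction so the $\eta^{(3)}$-values are effectively independent, yet dense in 2D so the 2D log-correlations exhibit standard 2D GFF behavior.

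Combining the ingredients, $(\E\max \eta^{\ttorus})^2 \sim 4\sigma_*^2\log n = \frac{4(1+2\phi)\log n}{\pi a}$. Substituting into Ding's formula (with the normalization fixed by the $d=2$ case), we obtain
\begin{equation*}
 \tcov(\ttorus) \sim \tfrac{1}{2}\cdot 3 n^2 h \cdot \frac{4(1+2\phi)\log n}{\pi a} = \frac{6(1+2\phi)}{\pi}n^2(\log n)^2 = (1+2\phi)\tcovp(1+o(1)) = C(a,n)(1+o(1)),
\end{equation*}
using $h = [a\log n]$ and $\tcovp = \frac{3}{2}\tcov(\Z_n^2) = \frac{6}{\pi}n^2(\log n)^2(1+o(1))$, as claimed.
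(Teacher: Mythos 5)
Your high-level strategy is the paper's: appeal to Ding's theorem to reduce to $\E[\max\eta]$ for the GFF on $\ttorus$, compute effective resistances (your three-stage flow decomposition and the resulting $R_{\mathrm{eff}}\approx 2r_3+\frac{1}{\pi a}$ match Lemma~\ref{lem-green-function-2}), and substitute into the cover-time formula. However, your two key quantitative claims are each off by a compensating factor, and the lower-bound argument as sketched cannot actually close.

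First, Ding's formula in this normalization is $\tcov = (1+o(1))|E_n|\bigl(\E\sup\eta\bigr)^2$ (Theorem~\ref{thm-Ding}) --- no extra $\tfrac{1}{2}$. Your ``matching the $d=2$ case'' only produces a $\tfrac{1}{2}$ because of the second error: you claim $\E[\max\eta]\sim\sqrt{2\sigma_*^2\log(n^2h)}$, i.e.\ $2\sigma_*\sqrt{\log n}$, as if the field behaved like $N=n^2h$ i.i.d.\ Gaussians of variance $\sigma_*^2$. But the correct value (Lemma~\ref{lem-GFF-torus}) is $\sqrt{2\sigma_*^2\log n}=\sqrt{2}\,\sigma_*\sqrt{\log n}$ --- smaller by a factor $\sqrt{2}$. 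The point is that $\operatorname{Var}(\eta_u-\eta_v)=R_{\mathrm{eff}}(u,v)\le\sigma_*^2$ for \emph{all} pairs, so Sudakov--Fernique against i.i.d.\ Gaussians of variance $\sigma_*^2/2$ already forces $\E\max\eta\le\sigma_*\sqrt{\log N}=\sqrt{2\sigma_*^2\log n}(1+o(1))$. Your target $2\sigma_*\sqrt{\log n}$ is strictly above this upper bound, so the proposed ``Pythagorean'' lower bound cannot succeed, no matter how the subset or second-moment details are arranged. The two errors of course cancel in the final arithmetic, which is why you land on the right answer, but the intermediate steps are not correct.

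The decomposition $\eta = \eta^{(2)}+\eta^{(3)}$ is also mis-calibrated. Computing covariances directly from $R_{\mathrm{eff}}$, the $z$-averaged field has variance $\approx r_3+\frac{1}{\pi a}$ (not $\frac{1}{\pi a}$) and log-correlation coefficient $\frac{1}{2\pi h}$ (not $\frac{1}{\pi h}$), and the residual has variance $\approx r_3$ (not $2r_3$). More fundamentally, the maximum of a sum of two fields is not the Pythagorean combination of their maxima unless both behave like i.i.d.\ Gaussians; here $\eta^{(2)}$ lives on $\Z_n^2$ and $\eta^{(3)}$ ranges over one thin column per 2D site, so the column maxima contribute only $O(\sqrt{\log h})=O(\sqrt{\log\log n})$. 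The paper's matching lower bound (Lemma~\ref{lem-GFF-torus}) instead Sudakov--Fernique-compares against a single rescaled 2D GFF on a coarse $(n/8h)$-grid, via the resistance inequality $R^{\ttorus}_{\mathrm{eff}}(g(u),g(v))\ge(2\phi+1+o(1))\,h^{-1}R^{\SA}_{\mathrm{eff}}(u,v)$; the 3D part enters multiplicatively through the factor $2\phi+1$ rather than additively as a second extreme. That is a genuinely different and simpler mechanism than the one you propose, and it is the one that actually reaches $\sqrt{2\sigma_*^2\log n}$.
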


\begin{Remark-lbl} One expects the 
cutoff threshold transition from $2$D to $3$D behavior
to occur when $\tcov(\ttorus)/\tcov(\Z_n^2)=O(1)$, while 
depending on the height multiplier $a$. By Theorem \ref{thm-cover-time} 
the correct scaling for this is $\log n$ (which as shown in 
Section~\ref{sec:cover}, has to do with the decay rate of 
${\mathrm{Diam}}_{R_{\mathrm{eff}}}(\Z_n^2)$, see \eqref{eq:bd-reff}).
\end{Remark-lbl}

\begin{figure}[ht!]
\includegraphics[width=0.7\textwidth]{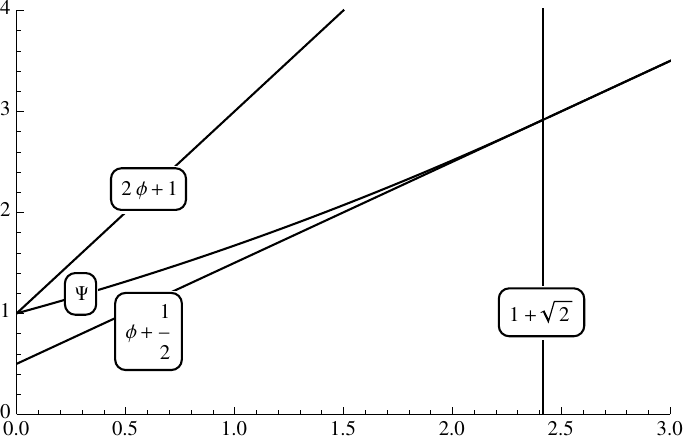}
\caption{\label{fig:mixing_plot} \small{
The function $\Psi$ from~\eqref{eq:mix-bd} which
gives the asymptotic ratio of $\tmix/\tcovp$.  
Also shown are the
bounds of $2\phi+1$ and $\phi+\tfrac{1}{2}$
on $\tmix / \tcovp$; recall~\eqref{eqn::pr_bounds}.  
The lower bound is attained by $\Psi$ starting at 
$\phi=1+\sqrt{2}$. 
}}
\end{figure}

Our main result establishes cutoff for 
\abbr{srw} $\{X^\diamond_k\}$ on the lamplighter graph $(\ttorus)^\diamond$ and determines its location as a function of the height parameter $a$.
\begin{thm}
\label{thm:cut-off}
Total-variation cut-off occurs for
$\{X^\diamond_k\}$ on $\ttorus$
at $\Psi(\phi) \tcovp$, where
\begin{equation}
\label{eq:mix-bd}
\Psi(\phi) :=
\begin{cases}
 \left(1+ (1-\frac{1}{\sqrt{2}}) \phi \right)^2, &\mbox{ if } \phi \leq \sqrt{2} + 1, \\
\frac{1+2\phi}{2}, &\mbox{ if } \phi > \sqrt{2} + 1\,.
\end{cases}
\end{equation}
In particular, $\tmix = (\Psi(\phi) + o(1)) \tcovp$.
\end{thm}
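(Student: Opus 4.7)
I would adopt the Peres--Revelle / Miller--Peres framework reducing $\tmix((\ttorus)^\diamond)$ to statistics of the uncovered set $\CU(t) \subseteq V(\ttorus)$ of the underlying walker. Conditional on the walker's trajectory, the lamps on visited sites are uniform and independent while those on $\CU(t)$ remain frozen at their initial values; a direct chi-squared computation then gives
\[
\|\mu^t - \pi^\diamond\|_{\TV}^2 \leq \tfrac{1}{4}\bigl(\E[2^{|\CU(t)|}] - 1\bigr) + o(1),
\]
where the $o(1)$ absorbs the negligible mixing of the walker marginal (since $\tmix(\ttorus) \ll \tcovp$). The upper bound at $t = (\Psi(\phi)+\epsilon)\tcovp$ therefore reduces to $\E[2^{|\CU(t)|}] = 1 + o(1)$. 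For the lower bound at $t = (\Psi(\phi)-\epsilon)\tcovp$, I would construct a distinguishing statistic --- a signed lamp count over a well-chosen set of ``typical uncovered'' sites --- and use first- and second-moment control on $|\CU(t)|$ to show its distribution differs non-negligibly from stationary.

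Next I would exploit the product structure $\ttorus = \bigsqcup_{x \in \Z_n^2} C_x$, $C_x := \{x\} \times \Z_h$, and condition on the 2D projection $\ol{X}$ of the walker. Writing $|\CU(t)| = \sum_x |\CU_x(t)|$ with $\CU_x(t) := \CU(t) \cap C_x$, the column counts become conditionally independent given the local-time profile $\{L^x(t)\}_{x \in \Z_n^2}$ of $\ol{X}$. The walker's within-column motion is a sequence of excursions of a 1D random walk on $\Z_h$ whose frequencies and lengths --- governed by transient 3D behaviour at scales below $h$ and quasi-2D behaviour above --- are controlled by the 3D resistance $r_3$; tracking both scales produces the parameter $\phi = \pi r_3 a$ and a closed expression for $\E[2^{|\CU_x(t)|} \mid L^x(t)]$ in terms of $L^x(t)$ and $h = a \log n$. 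Taking $\log$ and summing over $x$ reduces the global moment to $\sum_x \log \E[2^{|\CU_x(t)|}\mid L^x(t)]$, which, via the Gaussian free field representation of the 2D local-time profile at $t \asymp \tcovp$ (the very machinery of \cite{Ding11} used in Section~\ref{sec:cover} to establish Theorem~\ref{thm-cover-time}), becomes a large-deviation variational problem parametrised by the thickness fraction $\alpha = L^x(t)/\E L^x(t)$: the 2D log-density cost of columns of thickness $\alpha$ must balance the 1D exponential uncovered gain within such columns. The critical $t$ at which this minimum vanishes is $\Psi(\phi) \tcovp$.

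The phase transition at $\phi_\star = 1 + \sqrt{2}$ is then precisely the point at which the minimizer of the variational problem switches regime. For $\phi \leq \phi_\star$ the optimal $\alpha$ is interior and a quadratic balance of 2D log-density against 1D coverage gives $\Psi(\phi) = (1+(1-1/\sqrt{2})\phi)^2$; for $\phi > \phi_\star$ the optimizer saturates at $\alpha = 1/2$ and one recovers the 3D half-cover-time regime of \cite{Miller-Peres}, $\Psi(\phi) = (1+2\phi)/2$. Continuity at $\phi_\star$ is automatic from the variational formulation, while the discontinuity in slope reflects the change of active constraint. I expect the main technical obstacle to be the sharp matching of upper and lower bounds across $\phi_\star$: it requires joint concentration of the logarithmically correlated (GFF-type) profile $\{L^x(t)\}$ together with the column-level coverages, without allowing the $\log\log n$ corrections that typically arise in thick-point estimates to smear the threshold.
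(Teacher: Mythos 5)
Your reduction to exponential moments of $|\CU(t)|$ via the Miller--Peres framework, the choice to decompose the torus into columns $C_x = \{x\} \times \Z_h$, and the recognition that the column-level analysis feeds into a GFF-type large-deviation question for the 2D local-time profile --- all of this is in the right spirit and overlaps with what the paper actually does (the paper's excursion counts $\nballE$, $\ncylE$ at scales $r,r'$ and $R_k,R_k'$ play the role of your local-time conditioning). However, there is a decisive gap: your variational problem is parametrised by a \emph{single} column-thickness fraction $\alpha$, whereas the correct variational problem (Lemma~\ref{lem:Psi} in the paper) is over a \emph{pair} $(\rho,z)$ --- or, at the level of proof, a profile $\underline{z}\in[0,1]^{L+1}$ of excursion counts across cylinder scales $n^{\rho_k} R$. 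The extra parameter $\rho$ is not cosmetic: it encodes the scale at which the uncovered set clusters, and it is responsible for the factor $\rho/2$ in $\alpha_\rho(z) = sz^2/(\rho/2+\phi)$. If you run your balance of ``2D log-density cost of thin columns'' against ``per-column uncovered gain'' at the column scale only (effectively $\rho=0$), you get $\sup_z \frac{2}{2(1-z)^2 + z^2/\phi} = 1 + 2\phi$, which is the trivial upper bound $\tmix\lesssim\tcov$, not the claimed $\Psi(\phi)=(1+(1-1/\sqrt 2)\phi)^2$. The exponential moment $\E[2^{|\CU(t)\cap\CU'(t)|}]$ (not $\E[2^{|\CU(t)|}]$) is what needs to be controlled, and its size is dominated by hierarchical clustering of co-uncovered points at an interior scale $n^{\rho_\star}$ with $\rho_\star = (\sqrt 2 - 1)\phi$, which your ansatz never surfaces.

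A related consequence is that your proposed mechanism for the phase transition is incorrect. You attribute the kink at $\phi_\star = 1+\sqrt 2$ to ``the optimizer saturating at $\alpha = 1/2$''; in fact (see the proof of Lemma~\ref{lem:Psi}) it is the optimal \emph{spatial scale} $\rho_\star = (\sqrt 2 -1)\phi$ that hits the boundary $\rho = 1$ of the admissible range, switching the active constraint from an interior stationary point of $\rho\mapsto\sqrt{\rho(\phi+\rho/2)}+1-\rho$ to the boundary $\rho=1$. Finally, even setting aside the variational identification, the claimed conditional independence of the column uncoverings given $\{L^x(t)\}$ is not exact --- visits to a column at different times yield dependent within-column excursion starts --- and the sharp GFF-type control on $\E[\exp(\sum_x g(L^x(t)))]$ you would need (exponential functionals of log-correlated thin-point profiles with no $\log\log n$ slack) is not available in the literature. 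The paper circumvents both issues by working directly with conditionally independent internal parts of $R_k$-excursions at an increasing sequence of scales, proving quantitative tail bounds (Lemmas~\ref{lem:3d} and~\ref{lem:2d}) at each scale and iterating the resulting moment-generating-function estimates (cf.~\eqref{eq:iterate}). That explicit multi-scale iteration is the idea your sketch is missing.
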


Comparing Theorems~\ref{thm-cover-time}
and~\ref{thm:cut-off} we see that the ratio between
the mixing time of $\{X_k^\diamond\}$ and the
cover time $C(a,n)$ of the
base graph by the \abbr{srw} $\{X_k\}$,
monotonically interpolates between the fraction of the cover time necessary to mix in two dimensions (ratio $1$) \cite{DPRZ-cover,PR} and the fraction in three dimensions (ratio $1/2$) \cite{Miller-Peres}.  This gives an affirmative answer to the first question posed in \cite[Section~7]{Miller-Peres}.  See Figure~\ref{fig:mixing_plot} for a plot of the quantities from Theorem~\ref{thm:cut-off} and how they relate to the bounds~\eqref{eqn::pr_bounds}.

We note in passing that for all $\phi>0$ the value of
$\tmix/\tcovp \to \Psi(\phi)$ is bounded
away from its trivial bound $1$. The latter corresponds
to the mixing time for the lamplighter graph
on the $2$D torus of side length $n$ that corresponds to the base sub-graph 
$(x_1,x_2,1)$ of $\ttorus$ (which as shown in \cite{PR} coincides with
the cover time $\tcovp (1+o(1))$ for the corresponding (lazy)
$2$D projected \abbr{srw}). However, when $\phi \ge \sqrt{2}+1$
asymptotically $\tmix$ matches the
elementary bound $\tmix \ge \frac{(1+o(1))}{2} C(a, n)$
(see~\eqref{eq:cover-time}, and \cite[Lemmas~19.3 and~19.4]{LPW}), which 
applies for the lamplighter chain on any base graph having
maximal hitting time which is significantly smaller
than the corresponding cover time.

\begin{Remark-lbl} It is possible to adapt the proof of 
Theorem~\ref{thm:cut-off} so that it will yield a similar conclusion in the 
setting of a more general $3$-dimensional lattice confined to 
a thin slab of size $n \times n \times h$.

\end{Remark-lbl}
\begin{Remark-lbl}\label{rmk:unvisited}
Clearly, $X^\diamond_t$ is not mixed for as long as the uncovered set 
$\CU(t)$ of $X$ exhibits some non-trivial systematic 
geometric structure that makes the corresponding lamp states
distinguishable from the uniform marking of $V(\SG)$ by i.i.d.\ fair coin flips.
Further, the uniformity of $\CU(t)$ typically determines the
threshold $t$ for mixing time of $X^\diamond$, and indeed our work contributes to the literature on the 
geometric structure of the last visited points by the \abbr{srw} 
(see \cite{BH91,DPRZ-cover,DPRZ-late,Miller-Peres,BEL12,Miller-Sousi}).  
\end{Remark-lbl}

\begin{Remark-lbl} By the reasoning of Remark \ref{rmk:unvisited}, 
up to technical issues, we expect that $\tmix(\SG_n^\diamond)$ is 
$\gamma \tcov(\SG_n)(1+o(1))$ for some $\gamma \in (1/2,1)$, provided that: 
\begin{itemize}
\item The Green's functions $G_n(x,y)$ for $\SG_n$ are bounded above 
on the diagonal.  (This should prevent clustering in $\CU(\gamma \tcov(\SG_n))$ for 
$\gamma$ sufficiently close to $1$.)
\item The decay of $G_n(x,y)$ in terms of the distance between $x$ and $y$ is
non-uniform in $n$. 
(This should lead to clustering in $\CU(\gamma \tcov(\SG_n))$ beyond $\gamma=\tfrac{1}{2}$, while \cite{Miller-Peres} show that a uniform decay rate results in the threshold at $\tfrac{1}{2}\tcov(\SG_n)$.)
\end{itemize}
One interesting family of graphs $\SG_n$ of this type is given by the infinite cluster for super-critical Bernoulli percolation restricted to a thin
slab of size $n \times n \times h$.
\end{Remark-lbl}

\subsection{Outline of the proof of Theorem \ref{thm:cut-off}}\label{subsec:pf-outline}

Fixing $s \ge 1$, 
for any $\rho,z \in [0,1]$, the functions 
\begin{equation}\label{eq:b-def}
b_\rho (z)=1-\rho-\frac{s(1-z)^2}{1-\rho} , 
\qquad \qquad \alpha_\rho (z) = \frac{s z^2}{\frac{\rho}{2} + \phi} ,
\end{equation}
control the structure of $\CU(s \tcovp)$. 
Specifically, for any
$\rho \in [0,1]$ we associate with each $x \in V_n$
a type $z \in [0,1]$ according to the number of excursions
of the \abbr{srw}, by time $s \tcovp$, across the
$2$D cylindrical annulus of radii $M h n^\rho$ and
$M^2 h n^{\rho}$, centered at the $2$D projection of $x$.
Our parameters are such that for $n \to \infty$
followed by $M \to \infty$, \abbrs{whp} 
about $n^{2 b_\rho(z) + o(1)}$ of the $n^{2 (1-\rho) + o(1)}$
such annuli are of $z$-type and points $x \in V_n$
whose 2D projection is not far from the center of 
such $z$-type annulus, are unvisited
by the \abbr{srw} with probability $n^{-\alpha_\rho(z)+o(1)}$.
Further, in Section \ref{sec:ubd-gen-str} we confirm 
the following representation of $\Psi(\phi)$. 
\begin{lem}\label{lem:Psi}
For $s \ge 1$ and $\rho,z \in [0,1]$ let 
$b_\rho(z)$, $\alpha_\rho(z)$ be as in \eqref{eq:b-def}, with the convention that $b_1(z)=-\infty {\bf 1}_{\{z \ne 1\}}$.
Then, $\Psi(\cdot)$ of~\eqref{eq:mix-bd} emerges
from the following variational problem:
\begin{align}\label{eq:var-pbm}
\Psi(\phi) &=\inf\{ s \ge 1 : \forall \rho,z \in [0,1], \;
b_\rho(z) \ge 0 \;\;\Longrightarrow \;\;\alpha_\rho(z) \ge \rho \}
\\
&= \sup\{ s \ge 1 : \exists \rho,z \in [0,1], \mbox{ such that } b_\rho(z) \ge 0 \; \mbox{ and } \; \alpha_\rho(z) \le \rho \}.
\label{eq:t-lbd}
\end{align}
\end{lem}
\noindent
Calling a $z$-type $\rho$-\emph{admissible} if 
$b_\rho(z) > 0$, we know from \eqref{eq:t-lbd} 
that for any $s < \Psi(\phi)$ there exist $\rho \in (0,1)$ 
and $\rho$-admissible $z' \in (0,1)$ with $\alpha_\rho(z') < \rho$.
By continuity, the same applies for $L$ large enough and 
$\rho_k = k/L$ with $k:=[\rho L]$. Using this approximation, 
we show in Section~\ref{sec-lbd} that the maximum discrepancy at time
$s \tcovp$ between ``off-lamps'' and ``on-lamps''
over a certain large enough (and spatially well separated)
collection $\SA_{\twoD,k}$ of $2$D disjoint cylinders
of radii $h n^{\rho_k}$, far exceeds its value under the
invariant (uniform) law for the \abbr{srw} $\{X_\cdot^\diamond\}$.
This statistics distinguishes between the law of 
the lamplighter chain at time $s \tcovp$ and
its stationary law, thereby yielding the stated
lower bound on $\tmix=\tmix(\ttorus^\diamond)$.
 
In contrast, by the dual variational problem \eqref{eq:var-pbm},
for $s>\Psi(\phi)$, if $b_\rho(z) \ge 0$ then
the discrepancy of about $n^{-\alpha_\rho(z)}$ between the
fractions of ``off-lamps'' and ``on-lamps''
within each such annulus,
is buried under the inherent noise level of $n^{-\rho}$.
Thus, all such statistics agree with the stated upper bound 
$\tmix \le \Psi(\phi) \tcovp$.
As explained in Section \ref{sec:ubd-mix}, to actually 
upper bound $\tmix$, one needs to  
control exponential moments of the size of $\CU(s\tcovp)$
(more precisely, the size of the intersection of the unvisited 
sites by two independent random walks), 
which is the main technical challenge here. This is carried
out by carefully estimating the number of excursions within consecutive 
annuli. Specifically, utilizing H\"older's inequality it 
suffices to separately consider each $z$-type and to 
do so on a certain sparse sub-lattice $\SA$ 
of $V_n$, where at 
$\rho=0$ the Bernoulli$(n^{-\alpha_\rho(z)})$ variables
corresponding to $z$-type unvisited sites in $\SA$
are approximately independent
even in terms of tail probabilities.

At any $\rho>0$
the corresponding Bernoulli($n^{-\alpha_\rho(z)}$)
variables are no longer asymptotically independent.
To circumvent this problem, we group the vertices of 
$\SA$ into nested, growing cylindrical annuli, 
centered at sub-lattices $\SA_{\twoD,k}$ that correspond 
to $\rho_k=k/L$, $k=0,1,\ldots,L$. Then, for each 
vertex/base point, the excursion counts across different 
scale annuli define a type profile 
$\underline{z} \in [0,1]^{L+1}$ (that coincide at $k=0$ 
with its $z_0$-type). We characterize the collection
of all \emph{possible} excursion count profiles by
a careful extension of the concept of 
$\rho$-admissible $z$-types to that of admissible
$\underline{z}$-types. The bulk of this article is 
thus about controlling the exponential moment of the 
number of unvisited sites per
fixed admissible $\underline{z}$-type.
Taking first $n \to \infty$, then $M \to \infty$ and
finally $L \to \infty$, this is done in 
Sections~\ref{sec:ubd-mix}--\ref{sec:2d}
via estimates on modified Green functions and utilizing stochastic domination to employ 
large deviation tail estimates for sums 
of i.i.d. variables.

We note in passing that while lower bounding 
$\tmix$ we find that the most likely way to have $z$-type
at the $O(h)$ size 2D annulus corresponding to $\rho=0$,
is via the profile $z(\rho)=1-(1-\rho)(1-z)$.  However, we also show in Section~\ref{sec-lbd} that 
such profiles are \emph{highly unlikely} for the set $\CU(s \tcovp)$.
Thus, for a sharp upper bound on $\tmix$ one must
control the large deviations of 
\emph{all admissible} $z(\cdot)$-type profiles.

\section{Cover time for the thin torus: proof of Theorem~\ref{thm-cover-time}}
\label{sec:cover}

The Gaussian Free Field (in short \abbr{GFF}), on finite, connected graph $\SG = (V, E)$,
with respect to some fixed $v_0 \in V$, is the stochastic process
$\{\eta_u\}_{u \in V}$ with $\eta_{v_0}=0$, whose
density with respect to Lebesgue measure on
$V \setminus \{v_0\}$ is proportional to
\begin{equation}
\label{eq:density}
\exp\Big(-\frac14 \sum_{u\sim v} |\eta_u-\eta_v|^2
\Big),
\end{equation}
where we used $u \sim v$ to denote
 $\{u,v\} \in E$.
An important connection between \abbr{GFF} and the
\abbr{srw} on $\SG$ is the following identity
(see for example, \cite[Theorem~9.17]{Janson97}):
\begin{equation}\label{eq-GFF-resistance}
\E[(\eta_u - \eta_v)^2] = R_{\mathrm{eff}}(u, v)\,.
\end{equation}
Here $R_{\mathrm{eff}}(u, v)$ is the effective resistance between $u , v\in V$ in
the electrical network associated with $\SG$
by placing a unit resistor on each edge $\{u,v\} \in E$ (and we sometimes use $R^{\SG}_{\mathrm{eff}}(u, v)$ to emphasize the
underlying graph $\SG$, in case of possible ambiguity).

Our proof of Theorem~\ref{thm-cover-time} relies on the
following relation between the cover time
$\tcov(\SG)$ of $\SG$ by \abbr{srw} and the
maximum of the corresponding \abbr{GFF}.

\begin{thm}[$\text{\cite[Theorem~1.1]{Ding11}}$]
\label{thm-Ding}
Consider a sequence of graphs $\SG_n = (V_n, E_n)$ of
uniformly bounded maximal degrees, such that $\thit(\SG_n) =
o(\tcov(\SG_n))$ as  $n \to \infty$. For each $n$, let
$\{\eta_v\}_{v\in V_n}$ denote a \abbr{GFF} on $\SG_n$
with $\eta_{v_0^n} = 0$ for certain $v_0^n \in V_n$.
Then, as $n  \to \infty$,
\begin{equation}
\label{eq-cover-asymp}
\tcov(\SG_n) =
(1+o(1))|E_n| \Big(\E \, [\{ \sup_{v\in V_n} \eta_v \} ] \Big)^2\,.
\end{equation}
\end{thm}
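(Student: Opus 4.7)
The plan is to prove Theorem~\ref{thm-Ding} via the generalized second Ray--Knight theorem (Dynkin--Eisenbaum isomorphism), which identifies the local-time field of the walk at an inverse local time with a shifted squared \abbr{GFF}. Writing $L_t(v)$ for the local time at $v$ of the continuous-time \abbr{SRW} on $\SG_n$ and $\tau(u) := \inf\{t:L_t(v_0^n) \ge u\}$, the isomorphism states that for a \abbr{GFF} $\{\eta_v\}_{v \in V_n}$ pinned at $v_0^n$ and independent of the walk,
\[
\Big( L_{\tau(u)}(v) + \tfrac{1}{2}\eta_v^2 \Big)_{v \in V_n} \stackrel{d}{=} \Big( \tfrac{1}{2}(\eta_v + \sqrt{2u})^2 \Big)_{v \in V_n} .
\]
Since $\tau(u)$ is a sum of i.i.d.\ excursion lengths from $v_0^n$, with mean $c_n := 2|E_n|/d(v_0^n)$ and variance of each excursion at most $O(|E_n|\thit(\SG_n))$, the uniformly-bounded-degree hypothesis combined with $\thit(\SG_n)=o(\tcov(\SG_n))$ yields, by Chebyshev, $\tau(u) = (1+o_\P(1))\, c_n u$ uniformly for $u$ on the scale $u_n := (\E\sup_v\eta_v)^2$, which is precisely the scale on which $c_n u \asymp |E_n|(\E\sup\eta)^2$.

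For the upper bound I would choose $u_n^+$ so that $\tau(u_n^+) = (1+\varepsilon)^2(1+o_\P(1))|E_n|(\E\sup\eta)^2$ and $\sqrt{2u_n^+} \ge (1+\varepsilon)\,\E\sup\eta$. The event $\{\tcov(\SG_n) > \tau(u_n^+)\}$ is that some $v$ has $L_{\tau(u_n^+)}(v)=0$, and via the isomorphism this event has the same probability as the existence of $v$ with $\tfrac{1}{2}(\eta_v+\sqrt{2u_n^+})^2 \le \tfrac{1}{2}\eta_v^{\prime 2}$, where $\eta,\eta'$ are two independent copies of the \abbr{GFF} obtained by un-shifting the isomorphism. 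Borell--TIS concentration gives $\max_v|\eta_v|, \max_v|\eta_v'| \le (1+\varepsilon/2)\E\sup\eta$ with probability $1-o(1)$, and under this event the required inequality fails at every $v$; hence $\tcov(\SG_n) \le (1+O(\varepsilon))|E_n|(\E\sup\eta)^2$ \abbrs{whp}. A uniform-integrability step, for example via the crude universal bound $\tcov(\SG_n)=O(|V_n|\thit(\SG_n))$, transfers this to expectation.

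For the lower bound I would pick $u_n^-$ so that $\tau(u_n^-) = (1-\varepsilon)^2(1+o_\P(1))|E_n|(\E\sup\eta)^2$ and $\sqrt{2u_n^-} = (1-\varepsilon/2)\E\sup\eta$. By the $\pm\eta$ symmetry of the \abbr{GFF} and Borell--TIS, \abbrs{whp} there is a sizable level set of vertices $v$ with $\eta_v \le -(1-\varepsilon/3)\E\sup\eta$, at each of which $|\eta_v+\sqrt{2u_n^-}|=O(\varepsilon\E\sup\eta)$, so that $\tfrac{1}{2}(\eta_v+\sqrt{2u_n^-})^2$ is dominated by the independent $\tfrac{1}{2}\eta_v^{\prime 2}$ with probability bounded below; averaging over this near-minimizing set produces \abbrs{whp} a vertex $v_*$ with $L_{\tau(u_n^-)}(v_*)=0$, i.e.\ $\tcov(\SG_n) > \tau(u_n^-)$, and a tightness/subsequence argument upgrades this to the matching asymptotic lower bound on $\E[\tcov(\SG_n)]$. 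The main obstacle is converting the \emph{pointwise-in-law} isomorphism into a \emph{simultaneous} uniform comparison over all $v \in V_n$, which requires jointly controlling the two independent \abbr{GFF}s that appear after un-shifting via Gaussian concentration and absorbing excursion-length fluctuations (scale $\thit$) into the overall scale $\tcov$ — which is precisely what the bounded-degree and $\thit=o(\tcov)$ hypotheses afford.
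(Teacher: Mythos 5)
This statement is not proved in the paper at all: it is quoted verbatim as \cite[Theorem~1.1]{Ding11}, so there is no internal proof to compare against. Judged on its own terms, your proposal starts from the right place --- the generalized second Ray--Knight isomorphism is indeed the engine behind \cite{Ding11} --- but it contains a genuine gap at the decisive step.

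The gap is the ``un-shifting'' move. The isomorphism is an identity \emph{in law} between the single field $\bigl(L_{\tau(u)}(v)+\tfrac12\eta_v^2\bigr)_v$ and $\bigl(\tfrac12(\eta_v+\sqrt{2u})^2\bigr)_v$; it does not provide a coupling, and in particular it does not say that $\p[\exists v:\,L_{\tau(u)}(v)=0]$ equals the probability that some $v$ satisfies $\tfrac12(\eta_v+\sqrt{2u})^2\le\tfrac12(\eta'_v)^2$ for two independent copies $\eta,\eta'$. The event $\{L_{\tau(u)}(v)=0\ \text{for some}\ v\}$ is a functional of $L$ alone, and that information is destroyed when you pass to the law of the sum $L+\tfrac12\eta^2$: a small value of the sum at $v$ may come from $\tfrac12\eta_v^2$ being small rather than from $L_{\tau(u)}(v)$ vanishing. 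One direction can be salvaged softly (if uncovered vertices exist with probability $\ge\delta$, then conditioning on the walk and using that $\eta$ is independent forces $\min_v\tfrac12(\eta_v+\sqrt{2u})^2$ to be small with probability $\ge\delta\,p$, whence $\sqrt{2u}\le(1+o(1))\E\sup\eta$ by Borell--TIS together with $\max_v\var(\eta_v)\le\thit/|E_n|=o\bigl((\E\sup_v\eta_v)^2\bigr)$ from the commute-time identity; this yields the \emph{upper} bound on $\tcov$). The converse direction --- producing an actually uncovered vertex at time $(1-\epsilon)|E_n|(\E\sup\eta)^2$ --- cannot be read off the identity in law and is the hard content of \cite{Ding11}; it requires a sprinkling/second-moment argument (and, in settings beyond bounded-degree graphs and trees, was only resolved later via signed versions of the isomorphism). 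Your closing paragraph names this obstacle but then asserts that Gaussian concentration and the hypotheses ``afford'' the simultaneous comparison; they do not, and no argument is supplied. As written, both your upper- and lower-bound paragraphs rest on the invalid two-independent-copies reduction, so the proof does not go through.
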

In light of the preceding theorem, the key to the proof of Theorem~\ref{thm-cover-time} is an estimate on the expected supremum for the associated \abbr{GFF}. To this end, we start with few estimates of effective resistances
assuming familiarity with the connection
between random walks and electric flows (see for example \cite[Chapter~2]{LP}).
\begin{lem}\label{lem-send-flow}
Let $\{X_n\}$ denote the \abbr{srw} on the graph
$\SG = (V, E)$ started at some $o\in V$, independent
of a Geometric random variable $T$.  Then, there exists a current flow
$\theta =\{ \theta_{u,v} : \{u,v\} \in E\}$
with unit current source at $o$, current $p_v := \P[X_T = v]$ reaching
each $v \in V$, and the
Dirichlet energy bound
$$
{\mathcal D}(\theta) :=
\sum_{(u,v) \in E} \theta_{u,v}^2 \leq
\tfrac{1}{d_o}
\E \Big[\sum_{n=0}^T \one_{\{X_n = o\}} \Big]\,.
$$
\end{lem}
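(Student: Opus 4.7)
The plan is to construct $\theta$ explicitly as the \emph{expected net edge-crossing flow} of the trajectory $(X_0,\dots,X_T)$, recognize it as the unit-conductance current flow with the advertised source/sink profile, and then bound its Dirichlet energy by a routine Ohm--Kirchhoff calculation. Concretely, for each ordered adjacent pair $(u,v)$ I would set
\begin{equation*}
\theta_{u,v} := \E\Big[\sum_{k=0}^{T-1}\bigl(\one_{\{X_k=u,\,X_{k+1}=v\}} - \one_{\{X_k=v,\,X_{k+1}=u\}}\bigr)\Big].
\end{equation*}
By a telescoping identity the net flow out of any $v\in V$ equals $\P[X_0=v]-\P[X_T=v]=\one_{\{v=o\}}-p_v$, confirming that $\theta$ is a flow delivering current $p_v$ to each sink $v$ from a unit external source at $o$, as required.

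The key observation is that $\theta$ is automatically a \emph{gradient}. Using $P(u,v)=1/(2d_u)$ for the lazy walk and writing $N_v^{<T}:=\sum_{k=0}^{T-1}\one_{\{X_k=v\}}$,
\begin{equation*}
\theta_{u,v} = P(u,v)\,\E[N_u^{<T}] - P(v,u)\,\E[N_v^{<T}] = g(u)-g(v), \qquad g(v):=\tfrac{1}{2d_v}\E[N_v^{<T}],
\end{equation*}
so $\theta$ satisfies Ohm's law with unit conductances and potential $g$, and thus coincides with the unique current flow matching the boundary data just identified.

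Finally, summation by parts combined with the divergence formula gives
\begin{equation*}
\mathcal{D}(\theta) = \sum_{v\in V} g(v)\Big(\sum_{w\sim v}\theta_{v,w}\Big) = g(o) - \sum_{v\in V} g(v)\,p_v \le g(o),
\end{equation*}
since $g\geq 0$. Because $N_o^{<T}\le \sum_{n=0}^T\one_{\{X_n=o\}}$, the target bound follows, the extra factor of $2$ being absorbed by reading $\sum_{(u,v)\in E}$ as a sum over oriented edges (each unoriented edge counted twice). The main thing to watch is this orientation/laziness bookkeeping; all sums are finite and Fubini applies because $T$ is independent of the walk and has geometric (hence integrable) tail.
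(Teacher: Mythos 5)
Your argument is essentially the paper's: both construct $\theta$ as the expected net edge-crossing flow of $(X_0,\dots,X_T)$, telescope to identify the source/sink profile, recognize $\theta$ as a gradient, and bound $\mathcal D(\theta)$ by summation by parts plus positivity of $g$ and $p$. The paper phrases this with the potential $\oloc(v)=\tfrac1{d_v}\E[\sum_{n=0}^T\one_{\{X_n=v\}}]$ and $\theta^\star_{u,v}=t(\oloc(u)-\oloc(v))$, $t=\P[T\ge1]$; by the memoryless property your $g(v)=\tfrac1{d_v}\E[N_v^{<T}]$ equals $t\,\oloc(v)$, so the two potentials agree up to the trivial rescaling by $t$. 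The one thing to fix: Lemma~\ref{lem-send-flow} concerns the \emph{simple} random walk (as in the rest of Section~\ref{sec:cover}), so $P(u,v)=1/d_u$, not $1/(2d_u)$; your $g(v)=\tfrac1{2d_v}\E[N_v^{<T}]$ is off by a factor $2$, and the compensating appeal to ``oriented edges'' is not where the discrepancy actually lives --- the summation-by-parts identity $\sum_{\{u,v\}\in E}(g(u)-g(v))^2=\sum_v g(v)(\one_{\{v=o\}}-p_v)$ is naturally over unoriented edges. With the correct SRW kernel your chain of inequalities gives $\mathcal D(\theta)\le g(o)=\tfrac1{d_o}\E[N_o^{<T}]\le\tfrac1{d_o}\E[\sum_{n=0}^T\one_{\{X_n=o\}}]$ directly, with no orientation bookkeeping needed.
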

\begin{proof}
Let $t=\P[T \ge 1] \in (0,1)$. Set $\oloc(v) := \tfrac{1}{d_v} \E [\sum_{n=0}^T \one_{\{X_n = v\}}]$ and
$N(u,v):=\sum_{n=0}^{T-1} \one_{\{X_{n}= u, X_{n+1} = v
\}}$, for each $u,v\in V$.
Then, due to the memory-less
property of Geometric random variables, clearly
$$
p_v = \one_{v=o} +
\sum_{u:u\sim v} ( \E [ N(u, v) ] - \E [ N(v,u)] ) = \one_{v=o} + t \sum_{u:u\sim v}(\oloc(u) - \oloc(v)) \,.
$$
Thus, the current flow $\theta^\star_{u,v} := t (\oloc(u) - \oloc(v))$ on $(u,v) \in E$, together with
external unit current into $o$, results with
current $p_v$ reaching each $v \in V$. Furthermore,
\begin{align*}
\sum_{ (u , v) \in E} (\theta^\star_{u, v})^2 &= \tfrac{t^2}{2}
\sum_{ (u , v) \in E} (\oloc(u) - \oloc(v))^2 \leq
t \sum_{u\in V} ( \oloc(u) \sum_{v: v\sim u}
(\oloc(u) - \oloc(v)) ) \\
& \leq  t \oloc(o) \sum_{v:v\sim o} (\oloc(o) - \oloc(v)) \leq \oloc(o),
\end{align*}
since $t \sum_{v: v\sim u} (\oloc(u) - \oloc(v)) = - p_u \leq 0$ for all $u \neq o$, and is at most one at $u=o$.
\end{proof}

We will also need the following claim.
\begin{lem}\label{lem-bad-jian}
For any graph $\SG=(V, E)$, let $R$ be the diameter for the effective resistance
(of the \abbr{srw}, namely with unit edge weights).
Consider a collection of numbers $\{\rho_v: v\in V\}$ such that $\sum_{v\in V} \rho_v = 0$ and $\frac{1}{2}\sum_{v\in V} |\rho_v| = 1$, and let $\Theta$ denote the collection of all flows on $\SG$
such that at any vertex $v$ the difference between
out-going and in-coming flow is $\rho_v$. Then,
$$
\min_{\theta\in \Theta} \{\mathcal D(\theta) \} \leq R\,.
$$
\end{lem}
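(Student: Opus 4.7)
The plan is to reduce the minimization over $\Theta$ to a convex combination of unit current flows between pairs of vertices, and then bound the Dirichlet energy by Jensen's inequality applied to the joint law of such pairs.

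Concretely, first decompose the signed measure as $\rho = \rho^+ - \rho^-$, where $\rho^+(v) := \max(\rho_v,0)$ and $\rho^-(v) := \max(-\rho_v,0)$ are nonnegative. The hypotheses $\sum_v \rho_v = 0$ and $\tfrac{1}{2}\sum_v |\rho_v| = 1$ imply that both $\rho^+$ and $\rho^-$ are probability measures on $V$. Next, pick any coupling $\pi$ of these two measures on $V \times V$ (the product coupling $\pi(u,v) = \rho^+(u)\rho^-(v)$ already suffices). For each ordered pair $(u,v)$ let $\theta^{u,v}$ denote the unit current flow from $u$ to $v$ in the electrical network on $\SG$, which is the energy minimizer among unit flows from $u$ to $v$ and satisfies $\mathcal D(\theta^{u,v}) = R_{\mathrm{eff}}(u,v) \le R$.

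Now define the superposition
\begin{equation*}
\theta \;:=\; \sum_{u,v \in V} \pi(u,v)\, \theta^{u,v}.
\end{equation*}
At every vertex $w$, the net outflow of $\theta^{u,v}$ equals $\one_{w=u} - \one_{w=v}$, so the net outflow of $\theta$ at $w$ is $\rho^+(w) - \rho^-(w) = \rho_w$. Thus $\theta \in \Theta$. Viewing $\pi$ as a probability measure on $V \times V$, Jensen's inequality applied edgewise gives, for each $e \in E$,
\begin{equation*}
\theta_e^2 \;=\; \Bigl(\sum_{u,v} \pi(u,v)\, \theta^{u,v}_e\Bigr)^2 \;\le\; \sum_{u,v} \pi(u,v)\, (\theta^{u,v}_e)^2.
\end{equation*}
Summing over edges and interchanging the order of summation,
\begin{equation*}
\mathcal D(\theta) \;\le\; \sum_{u,v} \pi(u,v)\, \mathcal D(\theta^{u,v}) \;=\; \sum_{u,v} \pi(u,v)\, R_{\mathrm{eff}}(u,v) \;\le\; R,
\end{equation*}
which is the desired bound.

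No step looks genuinely hard: the only point that needs a little care is verifying the divergence computation for the superposition $\theta$, which relies on the fact that $\pi$ is a coupling with the correct marginals. The Jensen step is clean because $\pi$ has total mass $1$; if one instead tried to superimpose unit flows weighted by a general positive measure of mass $m$, a factor of $m$ would appear on the right-hand side, and this is precisely why the normalization $\tfrac{1}{2}\sum_v |\rho_v| = 1$ enters. Note also that by Thomson's principle the infimum over $\Theta$ is actually attained, so ``$\min$'' in the statement is justified.
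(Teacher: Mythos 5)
Your proof is correct and takes essentially the same approach as the paper: the paper's unnamed weight function $w:V^+\times V^-\to[0,\infty)$ with row sums $\rho^+$ and column sums $\rho^-$ is precisely your coupling $\pi$, the superposition $\theta=\sum w(v,u)\theta^{v,u}$ is identical, and the convexity step you attribute to Jensen is what the paper labels Cauchy--Schwarz. Your explicit remark that the product coupling $\pi=\rho^+\otimes\rho^-$ already works is a nice concretization of the paper's bare existence claim for $w$, but it does not change the argument.
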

\begin{proof}
Let $V^+ = \{v\in V: \rho_v \geq 0\}$ and $V^- = V\setminus V^+$. We define a function $w: V^+ \times V^- \mapsto [0, \infty)$ by $w(v, u) = |\rho_v \rho_u|$.  By assumption on $\rho$, we see that
$$\sum_{u\in V^-} w(v, u) = \rho_v \mbox{ for all } v\in V^+ \mbox{ and } \sum_{u\in V^+} w(u, v) = -\rho_v \mbox{ for all } v\in V^-\,.$$
So in particular we have $\sum_{v\in V^+, u\in V^-} w(v, u) = 1$.
For $(v, u)\in V^+\times V^-$, let $\theta^{v, u}$ be an electric current which sends {unit}
amount of flow from $v$ to $u$ {(so in
particular ${\mathcal D}(\theta^{v,u}) \le
R_{\mathrm {eff}}(v,u)$)}. Denoting
$\theta := \sum_{v\in V^+, u\in V^-}
{w(v, u)}
\theta^{v, u}$, {by our construction of $w(\cdot,\cdot)$} we see that $\theta\in \Theta$. It remains to bound the Dirichlet energy of $\theta$.  By Cauchy-Schwarz inequality, we get that
\begin{align*}
\mathcal D(\theta) = \sum_{e\in E} \theta_e^2 = \sum_{e\in E} \Big(\sum_{v\in V^+, u\in V^-} {w(v,u)} \theta^{v, u}_e\Big)^2 &\leq \sum_{e\in E} \sum_{v\in V^+, u\in V^-}w(v, u) (\theta^{v, u}_e)^2 \\
&\leq \sum_{v\in V^+, u\in V^-} w(v, u)\mathcal D(\theta^{v, u}) \leq R,
\end{align*}
completing the proof of the lemma.
\end{proof}

\begin{lem}\label{lem-green-function-2}
With $R_{\mathrm{eff}}(\cdot, \cdot)$ denoting effective resistances on $\ttorus=(V_n,E_n)$, we have
that for all $x, x'\in V_n$,
\begin{equation}
\label{eq:jian-ubd}
R_{\mathrm{eff}}(x, x') \leq 2 r_3 + \tfrac{1}{a \pi} + o(1)\,.
\end{equation}
Furthermore, for $x=(y, 0)$ and $x'= (y', 0)$ where
$y, y' \in \Z^2$
and $\|y - y'\|_{\Z^2_n} \geq 2a \log n$, we have
\begin{equation}
\label{eq:jian-lbd}
R_{\mathrm{eff}}(x,x') = 2 r_3 + \tfrac{1}{\pi a\log n}(\log \|y - y'\|_{\Z_n^2}) + o(1) \,.
\end{equation}
\end{lem}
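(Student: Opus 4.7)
The plan is to exploit the two-scale character of $\ttorus = \Z_n^2 \times \Z_h$ (with $h = a\log n$): on scales much below $h$ the graph looks like $\Z^3$, while on scales well above $h$ the third coordinate has mixed and the geometry reduces to that of $\Z_n^2$ with $h$-fold parallel conductance per bond. The upper bound \eqref{eq:jian-ubd} comes from an explicit three-stage flow construction via Lemmas~\ref{lem-send-flow} and~\ref{lem-bad-jian}, while the matching asymptotic in \eqref{eq:jian-lbd} follows from a direct spectral calculation.

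For the upper bound, I build a unit flow from $x$ to $x'$ as a sum of three sub-flows with essentially disjoint edge supports: an ``escape'' flow $\theta^{(1)}$ confined to a 3D neighborhood of $x$, a 2D ``transport'' flow $\theta^{(2)}$ in the complementary bulk, and a ``funnel'' flow $\theta^{(3)}$ confined to a 3D neighborhood of $x'$. The flow $\theta^{(1)}$ comes from Lemma~\ref{lem-send-flow} with $T$ geometric of mean $\asymp h^2$, delivering current from $x$ to a distribution $p^{(1)}_v = \p_x[X_T = v]$ concentrated on a 2D disk of radius $\asymp h$ about $y$; since the walk has not yet felt the torus wrap-around by time $h^2$, the lemma's bound on $\mathcal{D}(\theta^{(1)})$ converges to $d_x^{-1} G^{\Z^3}(0,0) = 1/(6q) = r_3 + o(1)$, using that the tail of the $\Z^3$ Green function beyond $h^2$ is $O(h^{-1})$. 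A symmetric construction yields $\theta^{(3)}$ near $x'$ with energy $r_3 + o(1)$. For $\theta^{(2)}$, I route the residual signed measure $p^{(1)} - p^{(3)}$ inside the coarsened 2D torus $\Z_n^2$ (viewed as having $h$ parallel edges per bond) via Lemma~\ref{lem-bad-jian} combined with the classical estimate $R^{\Z_n^2}_{\mathrm{eff}}(u,v) = \pi^{-1}\log\|u-v\|_{\Z_n^2}+O(1)$: this gives Dirichlet energy at most $\frac{1}{\pi a} + o(1)$ uniformly, and $\frac{\log\|y-y'\|_{\Z_n^2}}{\pi a\log n}+o(1)$ once $\|y-y'\|\geq 2h$ forces the 2D supports of $p^{(1)}, p^{(3)}$ to be separated. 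Summing the three (additive because the supports are disjoint) energies and applying Thomson's principle yields both \eqref{eq:jian-ubd} and the upper half of \eqref{eq:jian-lbd}.

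For the matching lower bound in \eqref{eq:jian-lbd}, I use the spectral expansion
\[
R_{\mathrm{eff}}(x,x') = \frac{2}{n^2 h}\sum_{(j,k,\ell)\ne(0,0,0)}\frac{1-\cos(2\pi(jz_1+kz_2)/n)}{\mu_{j,k,\ell}}\,,\qquad z = y - y'\,,
\]
with $\mu_{j,k,\ell} = 2(1-\cos\frac{2\pi j}{n})+2(1-\cos\frac{2\pi k}{n})+2(1-\cos\frac{2\pi\ell}{h})$. The $\ell = 0$ part equals exactly $h^{-1}R^{\Z_n^2}_{\mathrm{eff}}(y,y')$, contributing $\frac{\log\|y-y'\|_{\Z_n^2}}{\pi a\log n}+o(1)$. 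For $\ell\ne 0$, the spectral-gap bound $\mu_{j,k,\ell}\geq c h^{-2}$ forces the Fourier kernel in $(j,k)$ to live on spatial scale $h$; since $\|z\|\geq 2h$, the oscillatory factor averages to its mean $1$ up to $o(1)$, and Riemann-sum approximation converts the remaining $\ell\ne 0$ sum to $\frac{1}{(2\pi)^3}\iiint_{[-\pi,\pi]^3}\frac{d\theta\, d\phi\, d\psi}{3-\cos\theta-\cos\phi-\cos\psi} = \frac{1}{3} G^{\Z^3}(0,0) = 2 r_3$, via the Watson integral representation of the $\Z^3$ lattice Green function.

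The main technical obstacle is to make the ``averages-to-$1$'' step quantitative, with error $o(1)$ uniform in $n$ along $\|z\|\geq 2a\log n$ and over all $\ell \ne 0$. I plan to split the $(j,k)$-sum at $|(j,k)|_\infty \asymp n/h$: in the low-frequency range both $1-\cos$ and $\mu^{-1}_{j,k,\ell}$ admit direct control, whereas in the complementary range summation-by-parts against the smoothly-varying kernel $\mu^{-1}_{j,k,\ell}$ (for fixed $\ell \ne 0$) extracts an extra $O(n/(h\|z\|)) = O(1/\log n)$ cancellation.
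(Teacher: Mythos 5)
Your upper-bound argument follows the paper's strategy closely: both build a unit flow from $x$ to $x'$ by concatenating a local ``escape'' flow from Lemma~\ref{lem-send-flow} near $x$, a layer-by-layer 2D transport flow controlled by Lemma~\ref{lem-bad-jian}, and the reversed escape flow near $x'$. The one parameter discrepancy is material: you take $T$ geometric of mean $\asymp h^2$, but at that time scale the vertical coordinate on $\Z_h$ is not yet uniform --- one gets $\max_i|h\,p_{[i]}-1|=\Theta(1)$, not $o(1)$, which breaks the analogue of~\eqref{eq-uniform} needed to close the current balance across layers. The paper takes $\E T=(\log n)^4 \gg h^2$ precisely so the vertical marginal is $o(1)$-uniform; this forces the 2D support of $p_v$ to spread to scale $\asymp h^2$, which is harmless for~\eqref{eq:jian-ubd} (the only bound the paper extracts from the flow construction) but defeats your ``disjoint 2D supports'' refinement aimed at the upper direction of~\eqref{eq:jian-lbd} when $2h\leq\|y-y'\|\ll h^2$. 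You cannot have both.

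Your lower bound is a genuinely different route. The paper proves~\eqref{eq:jian-lbd} by a shorting argument: identify $\partial \SQ_x$, $\partial\SQ_{x'}$ and every vertical fiber to single vertices, and apply Rayleigh monotonicity plus the triangle inequality to split off $r_3+o(1)$ at each end and $h^{-1}R^{\Z_n^2}_{\mathrm{eff}}(y,y')$ in the middle. Your Fourier expansion is sound in outline --- the $\ell=0$ slice is exactly $h^{-1}R^{\Z_n^2}_{\mathrm{eff}}(y,y')$, the $\ell\neq 0$ diagonal part is a Riemann approximation to $\tfrac{1}{(2\pi)^3}\iiint(3-\cos\theta-\cos\phi-\cos\psi)^{-1}d^3\theta = 2r_3$, and the hypothesis $\|y-y'\|\geq 2h$ indeed kills the $\ell\neq 0$ cross term. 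However the technical plan you sketch for the cross term is not correct: the claimed summation-by-parts gain ``$O(n/(h\|z\|))=O(1/\log n)$'' is false in the borderline regime $\|z\|\asymp h=a\log n$, where $n/(h\|z\|)\asymp n/(\log n)^2\gg 1$; even two integrations by parts yield only a factor $(h/\|z\|)^2\leq 1/4$, not $o(1)$. The correct mechanism is that, for fixed $\ell\neq 0$, the 2D kernel $K_\ell(z)=n^{-2}\sum_{j,k}\cos(2\pi\langle(j,k),z\rangle/n)/\mu_{j,k,\ell}$ is the massive Green function with mass $m_\ell\asymp(\ell/h)^2$, hence decays like $\exp(-c\ell\|z\|/h)\leq e^{-2c\ell}$ once $\|z\|\geq 2h$, so $\tfrac{2}{h}\sum_{\ell\neq 0}|K_\ell(z)|=O(1/h)=o(1)$. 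You should also be explicit that the Riemann sum in the $(j,k,\ell)$-grid is strongly anisotropic (spacing $1/n$ in two directions, $1/h$ in the third), so that the na\"ive Riemann-to-integral passage overcounts the $\ell=0$ slab by a $\Theta(1)$ amount; excluding $\ell=0$ and first doing the $(j,k)$-limit per fixed $\ell$, then the $\ell$-sum, is what makes the convergence to $2r_3$ go through, but this needs a uniformity argument since $n$ and $h=a\log n$ are tied. Overall your spectral route is workable and more explicit than the paper's, but it demands considerably more quantitative bookkeeping; the paper's shorting argument is shorter and avoids all of these delicate Riemann-sum and oscillatory-sum estimates.
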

\begin{proof} Fixing arbitrary $x,x' \in V_n$ we
establish~\eqref{eq:jian-ubd} upon constructing
a flow of $1+o(1)$ current from $x$ to $x'$ whose
Dirichlet energy is at most $2 r_3 + 1/(a \pi) + o(1)$.
To this end, for $\{X_n\}$ a \abbr{srw} on $\ttorus$
and an independent Geometric random variable $T$ of mean $(\log n)^4$,
let
$p_v = \P_x[X_T = v]$ for $v\in V_n$, and
$p_{[i]} := \sum_{v\in \Z_n^2 \times \{i\}} p_v$
(namely, the probability that the ``vertical'' coordinate
of $X_T$ is at $i \in \Z_h$).
We claim that
\begin{equation}\label{eq:exp-loc-time}
\frac{1}{6}\E_x \Big[ \sum_{t=0}^T \one_{\{X_t = x\}} \Big] = r_3 +o(1)\,.
\end{equation}
In order to see the lower bound in~\eqref{eq:exp-loc-time}, we note that the random walk is the same as a random walk in $\Z^3$ in the first {$h=[a \log n]$} steps, during which period the expected number of visits accumulated at $x$ is {already} $6(r_3+o(1))$.
Setting {$N=(\log n)^5$}, since
$\E (T \one_{T \ge N}) \to 0$, we get the
matching upper bound upon showing that
\begin{align}
\E_x\Big[\sum_{t=h}^{N} \one_{\{X_t = x\}}\Big ]
=o(1)
 \,. \label{eq-bad-jian-1}
\end{align}
To this end, with $A$ denoting the event that
simultaneously for all $h \leq t \leq N$,
the number of vertical steps made by the
\abbr{srw} up to time $t$ is in the range $(t/10, t/2)$,
we clearly have that {$\P[A^c] \leq (\log n)^{-r}$
for any $r$ finite and all $n$ large enough}. Therefore
\begin{align*}
\E_x\Big[\sum_{t=h}^{N} \one_{\{X_t = x\}}  \Big ]& \leq N \P[A^c] + \E_x\Big[\sum_{t=h}^{N}
\one_{\{X_t = x, \,A\}}  \Big ]
= o(1)+\sum_{t=h}^{N} \frac{O(1)}{\sqrt{\log n}} \frac{O(1)}{t} = o(1),
\end{align*}
with the term $\frac{O(1)}{\sqrt{\log n}}$ upper bounding
the probability of the \abbr{srw} returning at time $t$
to its starting height (referring to its vertical coordinate), and $O(1/t)$ bounding the probability of
its 2D projection returning to the starting point, respectively (we obtain their independence
upon conditioning on the number of
vertical steps the \abbr{srw} made up to time $t$).
Combined with~\eqref{eq-bad-jian-1}, this completes the verification of~\eqref{eq:exp-loc-time}.

Now, by~\eqref{eq:exp-loc-time} and Lemma~\ref{lem-send-flow}, there exists a
unit current flow $\theta^{(x)}$ out of $x$, with
current inflow of $p_v$ into each $v\in V_n$ and
\begin{equation}\label{eq-flow-1}
{\mathcal D}(\theta^{(x)})
= \sum_{ (u,v) \in E_n }
(\theta^{(x)}_{u,v})^2 \leq r_3 +o(1)\,.
\end{equation}
Setting
$p'_v := \P_{x'} [X_T = v]$
and $p'_{[i]} := \sum_{v\in \Z_n^2 \times \{i\}} p'_v$, we have by the same reasoning a unit current
flow $\theta^{(x')}$ out of $x'$, with current inflow
$p'_v$ into each $v \in V_n$ and
\begin{equation}\label{eq-flow-2}
{\mathcal D}(\theta^{(x')})
\leq r_3 +o(1)\,.
\end{equation}
Furthermore,
it is clear that  with probability $1-o(h^{-4/3})$ we have $T \geq h^{5/2}$, and thus by time $T$ the vertical
component of $\{X_{t}\}$ is so nearly uniformly
distributed that  (here we use the fact that the mixing time for a cycle of size $k$ is $O(k^2)$ and we apply this fact to the random walks started at $x$ and $x'$ separately)
\begin{equation}\label{eq-uniform}
\max_{i} \big|h p_{[i]} - 1 \big| = o(1)
= \max_i \big|h p'_{[i]} - 1 \big|\,.
\end{equation}
Next, fixing $i \in \Z_h$ set
$\rho_i, \rho'_i\in [0,1]$ such that

$$
\rho_i p_{[i]}
= \rho'_i p'_{[i]} = \min\{p_{[i]},p'_{[i]}\}
$$
so there exist zero-net current flows
on the sub-graph $\Z_n^2 \times \{i\}$
of $\ttorus$, with outflow $\rho_i p_v$ and
inflow $\rho'_i p'_v$ at each
$v\in \Z_n^2 \times \{i\}$. Let
$\theta^i$ denote the flow of minimal
Dirichlet energy among all such current flows and
$|\theta^i| = \frac{1}{2}\sum_{v\in \Z_n^2 \times \{i\}} |\rho_i p_v - \rho'_i p'_v|$ {its total flow}.
Then, by Lemma~\ref{lem-bad-jian} we have that
$$\mathcal D(\theta^i) \leq |\theta^i|^2 \mathrm{Diam}_{R_{\mathrm{eff}}}(\Z_n^2),$$
where $\mathrm{Diam}_{R_{\mathrm{eff}}}(\Z_n^2)$ is the diameter for the resistance metric in the torus $\Z_n^2$.
Note that 
$$\sum_i (\theta^i)^2 \leq \max_i |\theta^i| \sum_i |\theta^i| \leq \max_i |\theta^i|\,,$$
and that thanks to \eqref{eq-uniform},
\begin{equation}\label{eq:bd-theta}
|\theta^i| \leq \frac{1}{2} \sum_{v\in \mathbb Z_n^2 \times \{i\} } |\rho_i| p_v + |\rho'_i| p'_v = \min\{p_{[i]}, p'_{[i]}\} \,
\le \frac{1+o(1)}{h} \,.
\end{equation}
Combining the three preceding inequalities we obtain that
$$\sum_i {\mathcal D}(\theta^i) \leq \frac{1+o(1)}{h}
\mathrm{Diam}_{R_{\mathrm{eff}}}(\Z_n^2)\,. $$
Combined with the standard estimate
\begin{equation}\label{eq:bd-reff}
\mathrm{Diam}_{R_{\mathrm{eff}}}(\Z_n^2)
\le \frac{1+o(1)}{\pi} \log n
\end{equation}
(see, e..g, \cite[Lemma~3.4]{Ding11b}), we arrive at
\begin{equation}
\label{eq:jian-sum-theta-i}
\sum_i {\mathcal D}(\theta^i) \leq \frac{1+o(1)}{h}
\mathrm{Diam}_{R_{\mathrm{eff}}}(\Z_n^2)
\leq \frac{1}{a\pi} (1+o(1)) \,.
\end{equation}
Consider now the current flow $\theta^\star$
from $x$ to $x'$ obtained by combining
$\theta^{(x)}$ with the union of all flows
$\{\theta^i, i \in {\Z}_h\}$ and
the current flow $-\theta^{(x')}$.
The net amount of current reaching sub-graph
$\Z_n^2 \times \{i\}$ is then
$p_{[i]}-p'_{[i]}$, so by~\eqref{eq-uniform}
the flow from $x$ to $x'$ via $\theta^\star$ is
1+o(1), whereas
by~\eqref{eq-flow-1},~\eqref{eq-flow-2} and~\eqref{eq:jian-sum-theta-i}, its Dirichlet energy
is at most

$$
{\mathcal D}(\theta^{(x)}) + \sum_i {\mathcal D}(\theta^i)
+ {\mathcal D}(\theta^{(x')})
\leq  2 r_3 + \tfrac{1}{a \pi} + o(1)
,
$$
completing the proof of the upper bound~\eqref{eq:jian-ubd}.

For the lower bound, we let $\SQ_x$ and $\SQ_{x'}$ be cubes of side-length
$\log\log n$ centered around $x$ and $x'$, respectively.
Let $\SG_{a,n}$ be the graph obtained
by identifying $\partial \SQ_x$  (also $\partial \SQ_{x'}$) as a single vertex,
as well as identifying
$\{(z,i): 1\leq i\leq h \}$ as a single vertex for each
$z\in \Z_n^2$. By Rayleigh monotonicity principle, we see that
$$
R_{\mathrm{eff}}(x, x') \geq R_{\mathrm{eff}}
(x, \partial \SQ_x) +
R_{\mathrm{eff}}(x', \partial \SQ_{x'}) +
R_{\mathrm{eff}}^{\SG_{a, n}}(\partial \SQ_x, \partial \SQ_{x'})\,.$$
It is clear that $R_{\mathrm{eff}}(x, \partial \SQ_x) = R_{\mathrm{eff}}(x', \partial \SQ_{x'}) =r_3+o(1)$.
In addition, by the triangle inequality we see that
\begin{align*}
R_{\mathrm{eff}}^{\SG_{a, n}}(\partial \SQ_x, \partial \SQ_{x'}) & \geq R_{\mathrm{eff}}^{\SG_{a, n}} (x, x') - R_{\mathrm{eff}}^{\SG_{a, n}} (x,  \partial \SQ_x) - R_{\mathrm{eff}}^{\SG_{a, n}} (x',  \partial \SQ_{x'})\\
&= \frac{1}{h} (R_{\mathrm{eff}}^{\Z_n^2}(y, y') -
2 R_{\mathrm{eff}}^{\Z_n^2}(o, \partial \wt{\SQ}_o)) \\
&= \tfrac{1}{\pi a \log n}(\log \|y - y'\|_{\Z_n^2}) + o(1),
\end{align*}
where $\wt{\SQ}_o$ is a 2D box of side-length
$\log\log n$ centered around $o$, and the last equality follows for example from \cite[Lemma~3.4]{Ding11b}. Altogether, this gives the desired lower bound on the effective resistance.
\end{proof}

The following lemma is useful in comparing the maxima of two Gaussian processes (see for example \cite[Corollary~2.1.3]{Fernique75}).
\begin{lem}[Sudakov-Fernique]\label{lem-sudakov-fernique}
Let $\SJ$ be an arbitrary finite index set and let $\{\eta_j\}_{j\in \SJ}$ 
and $\{\xi_j\}_{j\in \SJ}$ be two centered Gaussian processes such that
\begin{equation}\label{eq-compare-assumption}
\E(\eta_j - \eta_k)^2 \geq \E (\xi_j - \xi_k)^2, \mbox{ for all } j, k \in \SJ \,.
\end{equation}
Then $\E [\max_{j\in \SJ} \eta_j] \geq \E [\max_{j\in \SJ} \xi_j]$.
\end{lem}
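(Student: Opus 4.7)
The plan is to follow the classical Gaussian interpolation (Slepian-type) argument. First, I would realize the two processes on a common probability space so that $\{\eta_j\}_{j \in \SJ}$ and $\{\xi_j\}_{j \in \SJ}$ are independent, and introduce the smooth surrogate for the maximum
\begin{equation*}
F_\beta(x) := \beta^{-1} \log \sum_{j \in \SJ} e^{\beta x_j}, \qquad \beta > 0,
\end{equation*}
which is $C^\infty$ with bounded first partials and satisfies $\max_{j \in \SJ} x_j \leq F_\beta(x) \leq \max_{j \in \SJ} x_j + \beta^{-1} \log |\SJ|$. It therefore suffices to prove $\E F_\beta(\eta) \geq \E F_\beta(\xi)$ for each fixed $\beta$ and then send $\beta \to \infty$.

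Next, I would interpolate by setting $Z_t := \sqrt{1-t}\,\xi + \sqrt{t}\,\eta$ for $t \in [0,1]$ and writing $\phi(t) := \E F_\beta(Z_t)$. Differentiating under the expectation and applying Gaussian integration by parts (Stein's identity) separately to the independent Gaussian vectors $\xi$ and $\eta$, the singular factors $(2\sqrt{t})^{-1}$ and $(2\sqrt{1-t})^{-1}$ cancel the $\sqrt{t}$ and $\sqrt{1-t}$ produced by integration by parts, yielding for $t \in (0,1)$
\begin{equation*}
\phi'(t) = \tfrac{1}{2} \sum_{j,k \in \SJ} \bigl( \E[\eta_j \eta_k] - \E[\xi_j \xi_k] \bigr) \, \E \bigl[ \partial_j \partial_k F_\beta(Z_t) \bigr].
\end{equation*}

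The key structural input is the translation invariance $F_\beta(x + c\mathbf{1}) = F_\beta(x) + c$, which differentiated twice gives $\sum_{k \in \SJ} \partial_j \partial_k F_\beta \equiv 0$ for every $j$ (and symmetrically in $j$). Consequently, subtracting the separable quantity $\tfrac{1}{2}(\E\eta_j^2 - \E\xi_j^2) + \tfrac{1}{2}(\E\eta_k^2 - \E\xi_k^2)$ from the coefficient $\E[\eta_j\eta_k] - \E[\xi_j\xi_k]$ leaves $\phi'(t)$ unchanged while converting raw covariances into increment variances, giving
\begin{equation*}
\phi'(t) = -\tfrac{1}{4} \sum_{j \neq k} \bigl( \E(\eta_j - \eta_k)^2 - \E(\xi_j - \xi_k)^2 \bigr) \, \E \bigl[ \partial_j \partial_k F_\beta(Z_t) \bigr].
\end{equation*}
A short direct computation shows $\partial_j \partial_k F_\beta(x) = -\beta\, p_j(x) p_k(x) \leq 0$ for $j \neq k$, where $p_i(x) := e^{\beta x_i}/\sum_\ell e^{\beta x_\ell}$. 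Combined with the hypothesis \eqref{eq-compare-assumption}, each summand in the previous display is non-positive, and the overall prefactor $-\tfrac{1}{4}$ then gives $\phi'(t) \geq 0$. Integrating from $0$ to $1$ yields $\E F_\beta(\eta) \geq \E F_\beta(\xi)$, and letting $\beta \to \infty$ completes the proof.

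I expect the only delicate step to be the algebraic rearrangement that uses the row- and column-sum identities $\sum_k \partial_{jk} F_\beta = \sum_j \partial_{jk} F_\beta = 0$ to replace raw covariance differences by the hypothesized increment-variance differences; the rest of the argument is routine once the log-sum-exp surrogate, whose off-diagonal Hessian entries have precisely the right sign for the comparison, is in place.
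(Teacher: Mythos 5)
Your interpolation argument is correct and complete. The paper itself does not prove Lemma~\ref{lem-sudakov-fernique}; it simply cites \cite[Corollary~2.1.3]{Fernique75}, so you have supplied a self-contained proof where the authors were content with a reference. Your proof is the now-standard Gaussian interpolation (Slepian/Kahane) argument for Sudakov--Fernique: the log-sum-exp smoothing $F_\beta$, the path $Z_t = \sqrt{1-t}\,\xi + \sqrt{t}\,\eta$ with independent copies, Stein's identity to express $\phi'(t)$ as a quadratic form in the Hessian of $F_\beta$, and --- the step you correctly flag as the crux --- the use of the row/column-sum identities $\sum_k \partial_j\partial_k F_\beta = 0$ to trade raw covariance differences $\E[\eta_j\eta_k]-\E[\xi_j\xi_k]$ for increment-variance differences. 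Since $\partial_j\partial_k F_\beta = -\beta\,p_j p_k \le 0$ for $j\ne k$, the hypothesis~\eqref{eq-compare-assumption} makes every term of the resulting sum contribute with the right sign, giving $\phi'(t)\ge 0$. All intermediate computations check out (in particular $a_{jk}-\tfrac12(a_{jj}+a_{kk}) = -\tfrac12\E(\eta_j-\eta_k)^2+\tfrac12\E(\xi_j-\xi_k)^2$, which vanishes on the diagonal so the restriction to $j\ne k$ is harmless), and the $\beta\to\infty$ passage is justified by the uniform sandwich $\max_j x_j \le F_\beta(x) \le \max_j x_j + \beta^{-1}\log|\SJ|$. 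This is a valid and indeed the canonical way to establish the lemma from scratch.
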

We are now ready to estimate the maximum of the \abbr{GFF} on the thin torus.
\begin{lem}\label{lem-GFF-torus}
Let $\{\eta_v: v\in V_n\}$ be a \abbr{GFF} on $\ttorus$ with $\eta_{v_0} = 0$. Then,
$$
\E \big[\max_{v \in V_n} \eta_v\big] = 2\sqrt{ r_3 + \tfrac{1}{2a \pi}+o(1)}\sqrt{ \log n} \,.$$
\end{lem}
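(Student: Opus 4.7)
The plan is to establish matching upper and lower bounds on $\E\max_v \eta_v$ via Sudakov–Fernique (Lemma~\ref{lem-sudakov-fernique}), using the two sides of Lemma~\ref{lem-green-function-2}.

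For the upper bound, fix $\epsilon > 0$ and let $\{\xi_v\}_{v \in V_n}$ be i.i.d.\ centered Gaussians of common variance $\sigma^2 = r_3 + (2a\pi)^{-1} + \epsilon$. By~\eqref{eq:jian-ubd} and~\eqref{eq-GFF-resistance},
\[
\E(\eta_u - \eta_v)^2 = R_{\mathrm{eff}}(u,v) \leq 2r_3 + (a\pi)^{-1} + o(1) \leq 2\sigma^2 = \E(\xi_u - \xi_v)^2
\]
for all $u,v \in V_n$ and $n$ large enough. Sudakov–Fernique then gives $\E\max_v \eta_v \leq \E\max_v \xi_v = \sigma\sqrt{2\log|V_n|}(1+o(1))$. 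Since $|V_n| = n^2[a\log n]$ yields $\log|V_n| = 2\log n(1+o(1))$, this is $2\sigma\sqrt{\log n}(1+o(1))$, and letting $\epsilon \downarrow 0$ delivers the desired upper bound.

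For the lower bound, I restrict to the two-dimensional slice $\SA = \SA' \times \{0\} \subset V_n$, where $\SA' \subset \Z_n^2$ is a sublattice of spacing $K = \lceil 2a\log n\rceil$; then $|\SA| = n^{2 - o(1)}$ and every pair of distinct points in $\SA'$ satisfies $\|y-y'\| \geq 2a\log n$, so~\eqref{eq:jian-lbd} is applicable. I introduce the Gaussian comparison process
\[
\xi_{(y,0)} := (a\log n)^{-1/2}\,\phi_y + g_y, \qquad y \in \SA',
\]
where $\phi$ is a $2$D \abbr{GFF} on $\Z_n^2$ pinned at a fixed $y_0 \in \SA'$ (so $\phi_{y_0}=0$) and $\{g_y\}_{y\in\SA'}$ are i.i.d.\ $N(0, r_3-\epsilon)$, independent of $\phi$, with $g_{y_0}=0$. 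Using the standard $2$D torus asymptotic $R^{\Z_n^2}(y,y') = \pi^{-1}\log\|y-y'\| + o(\log n)$ together with~\eqref{eq:jian-lbd},
\[
\E(\xi_u - \xi_v)^2 = \tfrac{R^{\Z_n^2}(y,y')}{a\log n} + 2(r_3 - \epsilon) \leq R_{\mathrm{eff}}(u,v) = \E(\eta_u - \eta_v)^2
\]
holds uniformly for distinct $u=(y,0), v=(y',0) \in \SA$ once $n$ is large. Sudakov–Fernique then yields $\E\max_{V_n}\eta \geq \E\max_{\SA}\xi$.

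The remaining step — and the principal technical obstacle — is to prove the matching lower bound $\E\max_\SA \xi \geq 2\sqrt{r_3 + (2a\pi)^{-1}}\sqrt{\log n}(1 - o(1))$ before sending $\epsilon \downarrow 0$. Note that $\xi$ is itself a log-correlated Gaussian field on $|\SA| = n^{2-o(1)}$ sites (an independent sum of a rescaled planar \abbr{GFF} and i.i.d.\ Gaussian noise); its maximum matches, to leading order, the Sudakov–Fernique upper bound $2\sigma_\xi \sqrt{\log n}(1+o(1))$ with $\sigma_\xi^2 = r_3 + (2a\pi)^{-1} - \epsilon$, mirroring the classical leading-order asymptotic for the planar \abbr{GFF} maximum. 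This last assertion can be carried out by a multiscale second moment computation that exploits the independence of $\{g_y\}$ together with the logarithmic covariance of the rescaled $2$D \abbr{GFF}; alternatively, one may quote and adapt the known sharp asymptotics for maxima of log-correlated planar Gaussian fields. Letting $\epsilon \downarrow 0$ then concludes the matching lower bound.
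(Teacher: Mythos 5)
Your upper bound is fine (Sudakov--Fernique against i.i.d.\ Gaussians instead of the paper's union bound plus integration; both yield the same conclusion). The lower bound, however, has a genuine gap in its final step. You compare $\eta$ with the hybrid $\xi_{(y,0)}=(a\log n)^{-1/2}\phi_y+g_y$ and then assert $\E\max_\SA\xi$ "matches the Sudakov--Fernique upper bound $2\sigma_\xi\sqrt{\log n}$ with $\sigma_\xi^2=r_3+(2a\pi)^{-1}-\epsilon$." But by~\eqref{eq-GFF-resistance} the actual variance is
\[
\var(\xi_u)=(a\log n)^{-1}R^{\Z_n^2}_{\mathrm{eff}}(y,y_0)+(r_3-\epsilon)\approx (\pi a)^{-1}+r_3-\epsilon,
\]
since the torus GFF \emph{pinned at a point} has $\var(\phi_y)\approx\pi^{-1}\log n$, twice the Dirichlet-box value $\approx(2\pi)^{-1}\log n$. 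So the genuine Sudakov--Fernique/union bound for $\xi$ is $2\sqrt{(\pi a)^{-1}+r_3-\epsilon}\sqrt{\log n}$, strictly larger than your target, and $\xi$ does \emph{not} attain it: writing $\phi_y=\phi^0_y-\phi^0_{y_0}$ with $\phi^0$ the zero-mean torus GFF, the extra variance sits entirely in the global shift $-\phi^0_{y_0}$, which inflates $\var(\xi_u)$ but cannot raise $\max\xi$. The value $2\sqrt{r_3+(2a\pi)^{-1}}\sqrt{\log n}$ you want is in fact correct, but the stated rationale for it is wrong, and actually proving it for your bespoke field (rescaled torus-pinned GFF on a sparse sublattice plus fresh i.i.d.\ noise) would require redoing a BDG-type first-order maximum estimate from scratch --- exactly the multiscale second-moment argument you explicitly leave unproved.

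The paper bypasses all of this with a cleaner comparison: it matches $\eta$ against a constant multiple of the standard \emph{Dirichlet} planar GFF $\xi$ on a box $\SA$ of side $n/(8h)$, embedded via $v\mapsto(2hv,0)$, using the multiplicative inequality $R^{\ttorus}_{\mathrm{eff}}(g(u),g(v))\geq(2ar_3\pi+1+o(1))h^{-1}R^{\SA}_{\mathrm{eff}}(u,v)$. The key observation is that the additive constant $2r_3$ in~\eqref{eq:jian-lbd} can be absorbed into a multiplicative constant because $R^{\SA}_{\mathrm{eff}}(u,v)\leq\pi^{-1}\log n\,(1+o(1))$ on $\SA$, so that $2r_3\geq(2ar_3\pi)\,h^{-1}R^{\SA}_{\mathrm{eff}}(u,v)(1+o(1))$. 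The comparison process is then exactly a rescaled planar Dirichlet GFF, and the sharp first-order asymptotic $\E\max_\SA\xi=(\sqrt{2/\pi}+o(1))\log n$ from \cite{BDG2000} is quotable verbatim. Your route is conceptually natural (it exposes the additive "$r_3$ noise plus log-correlated" structure), but it leaves its hardest step both unproved and misdiagnosed, whereas the paper converts the problem into one that already has a citable answer.
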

\begin{proof}
We first prove the upper bound. By~\eqref{eq-GFF-resistance} and Lemma~\ref{lem-green-function-2}, we get that 
\[
\sup_{u,v \in V_n} \{ \var(\eta_u - \eta_v) \} := 2 \sigma_n^2 
\leq 2 r_3 + \tfrac{1}{a \pi} + o(1) \,.
\]
Thus, for i.i.d.\ centered Gaussian variables $\{X_u: u\in V_n\}$ 
of variance $\sigma_n^2$ we have by Lemma~\ref{lem-sudakov-fernique} that
\begin{equation}\label{eq-gaussian-upper}
\E [\max_{u\in V_n} \eta_u] \leq \E [\max_{u\in V_n} X_u]\,.
\end{equation}
Note that
\begin{equation}\label{eq:x-tail}
\E [\max_{u \in V_n} X_u ]  \leq \int_0^\infty \Big[ \big(\sum_{u \in V_n} \P(X_u  \geq r) \big)\wedge 1\Big] \,dr\,.
\end{equation} 
Further, for a centered  Gaussian variable $Y$ of variance $\sigma^2$ 
we have 

\[
\P(Y\geq r) \leq  \mathrm{e}^{-\frac{r^2}{2\sigma^2}} \,, \qquad \forall r \ge 0 \,.
\]
Combined with \eqref{eq:x-tail} it yields that 
$\E [\max_{u \in V_n} X_u] \leq 2\sigma_n \sqrt{ \log n} (1+o(1))$, so 
from \eqref{eq-gaussian-upper} and the bound on $\sigma_n$ we deduce  
the stated upper bound on $\E [\max_{u \in V_n} \eta_u]$.

 For the lower bound, we employ a comparison argument. Let $\SA$ be a 2D box of side-length $n/(8h)$, and let
$\{\xi_v: v\in \SA\}$ be a \abbr{GFF} on $\SA$ with Dirichlet boundary condition (i.e., $\xi|_{\partial \SA} = 0$). Now define mapping $g: \SA \mapsto \ttorus$ by $g(v) = (2 h v, 0)$. It is well known that (see, e.g.,
\cite[Theorem~4.4.4 and Proposition~4.6.2]{LL10})
$$R_{\mathrm{eff}}^{\SA}
(u, v) = \tfrac{1}{\pi} \log \|u-v\|_2 + O(1)\,.$$
Combined with Lemma~\ref{lem-green-function-2}, it yields that for all $u, v\in \SA$
$$ R_{\mathrm{eff}}^{\ttorus}(g(u), g(v))\geq (2ar_3\pi + 1 +o(1)) h^{-1} R_{\mathrm{eff}}^{\SA}(u, v) \,,$$
where we have used the fact that $R_{\mathrm{eff}}^{\SA}(u, v) \leq \frac{1+o(1)}{\pi} \log n = \frac{(1+o(1))h}{a\pi}$.
Applying~\eqref{eq-GFF-resistance} and Lemma~\ref{lem-sudakov-fernique}, we obtain that
$$
\E [\max_{v \in V_n} \eta_v] \geq \sqrt{2ar_3\pi + 1 +o(1)} h^{-1/2}\E [\max_{u \in \SA} \xi_u]\,.
$$
Combined with \cite[Theorem~2]{BDG2000} which states that
$\E [\max_{ u \in \SA} \xi_u] = (\sqrt{2/\pi} + o(1))\log n$, this yields the desired lower bound on
$\E [\max_{v \in V_n} \eta_v]$.
\end{proof}
{As $|E_n|=3 a n^2 \log n (1+o(1))$,}
upon combining Theorem~\ref{thm-Ding}
and Lemma~\ref{lem-GFF-torus},
we immediately obtain Theorem~\ref{thm-cover-time}.

\section{Upper bound on mixing time: large deviations for admissible types}
\label{sec:ubd-mix}

For the task of upper bounding $\tmix(\ttorus^\diamond,\delta)$ it 
suffices to compare the stationary law with a worst case initial one, 
for which purpose any non-random initial configuration will do. 
Further, since $\tmix(\ttorus,\delta)$ is only $O(n^2)$ 
(see \cite[Theorem~5.5]{LPW}), we can and shall
instead start for convenience 
at $X_0^\diamond$ having all lamps off and initial position 
uniformly chosen in $V_n$. Fixing $s' > s > \Psi(\phi)$ 
and using $s$ in the sequel for setting the various excursion types,
our goal is to show that the total-variation distance 
between the law of $X^\diamond_{s' \tcovp}$ and the
uniform law goes to zero as $n \to \infty$. To this end,
let $\widehat{\CU}_{s'} := \CU(s' \tcovp)$ denote
the subset of the vertices $V_n$ of $\ttorus$ 
not visited by $X$ up to time $s' \tcovp$,
with $\widehat{\CU}'_{s'}$
corresponding to a second, independent copy $X'$ of the \abbr{srw} on $\ttorus$.
Then, with $X_0$ uniformly distributed, the $L^2$-norm of the density of the
law of $X^{\diamond}_{s' \tcovp}$ with respect to the uniform law,
is $\E \big[ 2^{|\widehat{\CU}_{s'} \cap \,\widehat{\CU}'_{s'}|} \big]$ 
(see \cite[Proposition~3.2]{Miller-Peres}). Adapting the argument of
\cite[Lemma~3.1]{Miller-Peres}, it thus suffices to
find an event $\widehat{\mathcal G}$ measurable on the path of the \abbr{srw}
$X$ on $\ttorus$ up to time $s' \tcovp$, such that as $n \to \infty$
\begin{equation}
\label{eq:jason-yuval}
\p[\widehat{\mathcal G}] \to 1, \qquad \text{and} \qquad
\E \big[2^{|\widehat{\CU}_{s'} \cap \, \widehat{\CU}_{s'}'|}\,
{\bf 1}_{\widehat{\mathcal G}} {\bf 1}_{\widehat{\mathcal G}'}
\big] \to 1 ,
\end{equation}
where $\widehat{\mathcal G}'$ 
corresponds to the independent copy $X'$ of the \abbr{srw} on $\ttorus$.
Without $\widehat{\mathcal G}$ and 
$\widehat{\mathcal G}'$, 
the right side of \eqref{eq:jason-yuval} 
amounts to the $L^2$-convergence to $1$ of the relevant density. 
Only $L^1$-convergence is needed for the total-variation mixing
and using $\widehat{\mathcal G}$ helps eliminate some 
rare events that may dominate the second moment (see also the 
discussion immediately following \cite[Proposition~3.2]{Miller-Peres}).

To establish \eqref{eq:jason-yuval}, fixing a
large integer $M$ we set hereafter
\[
r := M r' := M^2 \,.
\] 
Note that for each  
$\underline{i} := (\underline{i}^{(1)},\underline{i}^{(2)}) 
\in \{0,\ldots,2r-1\}^3 \times \{0,1\}^3$ the points of 
\begin{equation}\label{def:Astar3D}
\SA^\star_{\threeD} (\underline{i}) := 
\big(\underline{i}^{(1)} + (2r\N)^3 \big) \cap \big( [0,n)^2 \times [0,h) - 2r \, \underline{i}^{(2)}\big)
\end{equation}
are at least $2r$ apart in $\ttorus$, whereas the union of the
$(4r)^3$ sub-lattices $\SA^\star_{\threeD} (\underline{i})$
covers $V_n$. Indeed, $\SA^\star_{\threeD}(\underline{i})$ keeps minimal distance $2r$ from all
faces that meet at the corner of $[0,n)^2 \times [0,h)$
indicated by $\underline{i}^{(2)}$, thereby  
assuring the stated $2r$-separation \emph{on the torus} 
(even when $2r$ does not divide $n$ or $h$). 
\begin{figure}[ht!]
\begin{center}
\includegraphics[scale=0.85]{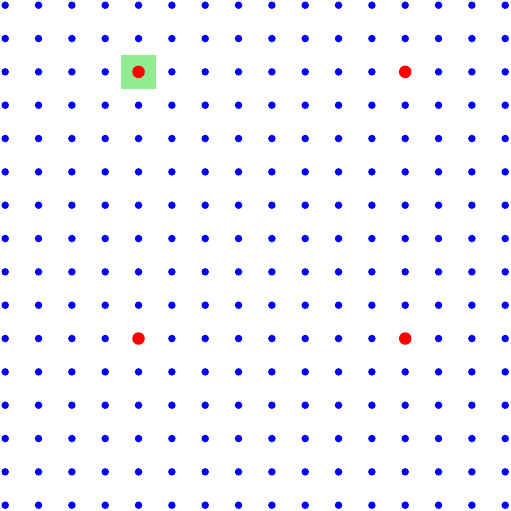}	
\end{center}
\caption{\label{fig:a2d} Illustration of a set $\SA^\star_{\twoD,k}(\underline{j}_k)$ as red dots of spacing $2R_k$ within a $\twoD$ sub-lattice of blue dots at
spacing $R_k''$. If $(x_1,x_2)$ is in the green square (of side length $R_k''$), then its center red point be $y_k(x)$. Here $R_k= 4 R_k''$ (that is, $M=2$).}
\end{figure}

Proceeding to 
produce in Definition \ref{def:Aij} the 
``2D-well-centered'' non-random 
subsets $\SA=\SA(\underline{i},\underline{j})$ 
of $\SA^\star_{\threeD} (\underline{i})$,
fix a large integer $L$ and approximate
the continuum of mesoscopic scales $h n^\rho$ by 
$R''_k = h [n^{\rho_k}]$ for $\rho_k=k/L$, $k=0,\ldots,L-1$ and 
$R''_L = [M^{-5} n]$.
Setting thereafter 
\[
R_k := M R'_k := M^2 R_k'' \,,
\] 
note that for any $L, M \ge 2$ and all $n$ large enough,
\begin{equation}\label{eq:sep-scale}
2 r < R_0'' < R_0' < R_0 < 2R_0 < R_1'' < R_1' < R_1 < 2R_1 < \cdots < R_L < n \,.
\end{equation}
Assuming hereafter that \eqref{eq:sep-scale} holds, for each  
$\underline{j}_k \in \{0,\ldots,(2R_k/R''_k)-1\}^2 \times \{0,1\}^2$
the points of  
\begin{equation}\label{def:Astar2Dk}
\SA^\star_{\twoD,k}(\underline{j}_k) := 
\big( R''_k \, \underline{j}^{(1)}_k + (2 R_k \N)^2 \big) \cap 
\big( [0,n)^2 - 2R_k \, \underline{j}_k^{(2)} \big)
\end{equation}
are $2R_k$ apart in the $2$D torus $\Z_n^2$ (thanks to the guard bands 
associated with $\underline{j}_k^{(2)}$), whereas for each $0 \le k \le L$ 
the union of $\SA^\star_{\twoD,k}(\underline{j}_k)$ over 
the $(4R_k/R''_k)^2$ possible values of $\underline{j}_k$ 
covers $\Z_n^2$. 
\begin{defn}\label{def:Aij}
For any $\underline{i}$ and 
$\underline{j}:=(\underline{j}_0,\underline{j}_1,\ldots,\underline{j}_L)$, 
let $\SA := \SA(\underline{i},\underline{j})$ 
denote the subset of those $x=(x_1,x_2,x_3) \in \SA^\star_{\threeD}(\underline{i})$ 
whose $2$D-projection $(x_1,x_2)$ lies for each $k=0,1,\ldots,L$
within the $R''_k$-sized square centered at some  
$y_k(x) \in \SA^{\star}_{\twoD,k}(\underline{j}_k)$.
\end{defn}
Note that $V_n$ is covered by the union of the 
\begin{equation}\label{def:kapp}
\kappa' := (4r)^3 (4M^2)^{2(L+1)}
\end{equation}
sets $\SA(\underline{i},\underline{j})$, with
$\kappa' = \kappa'(M,L)$ independent of $n$. 
We shall consider \eqref{eq:jason-yuval} for 
\begin{equation}\label{def:hatG}
\widehat{\CG} := \bigcap_{\underline{i},\underline{j}} \widetilde{\CG}_{\underline{i},\underline{j}}
\end{equation}
where each event $\widetilde{\CG}_{\underline{i},\underline{j}}$ on the path of 
the \abbr{srw} $X$ on $\ttorus$ up to time $s' \tcovp$
is defined via excursion counts associated with the 
points of

$\SA=\SA(\underline{i},\underline{j})$
(c.f. \eqref{def:Gij} and Definition 
\ref{dfn:Gz} for our specific choice
of $\widetilde{\CG}=\widetilde{\CG}_{\underline{i},\underline{j}}$). Then, 
by the union bound 
\[
\p[\widehat{\CG}^c]  \le
\kappa' \max_{\underline{i},\underline{j}} \p[\widetilde{\CG}_{\underline{i},\underline{j}}^c] 
\,.
\]
So, decomposing the set
$\widehat{\CU}_{s'} \cap \, \widehat{\CU}_{s'}'$ in the \abbr{rhs} of \eqref{eq:jason-yuval}
according to its intersections with the various $\SA(\underline{i},\underline{j})$,
by H\"older's inequality we get \eqref{eq:jason-yuval} upon showing that
for any $\underline{i}$, $\underline{j}$, as $n \to \infty$
\begin{equation}
\label{eq:js-yva}
\kappa' \p[\widetilde{\CG}_{\underline{i},\underline{j}}^c] \to 0 \qquad \text{and} \qquad
\E \big[2^{\kappa' |\SA(\underline{i},\underline{j}) \cap\,  
\widehat{\CU}_{s'} \cap \, \widehat{\CU}_{s'}'|}\,
{\bf 1}_{\widetilde{\CG}_{\underline{i},\underline{j}}} {\bf 1}_{\widetilde{\CG}'_{\underline{i},\underline{j}}} \big] \to 1 \,.
\end{equation}
Proceeding to prove~\eqref{eq:js-yva} for some fixed 
$(\underline{i},\underline{j})$ we avoid
crowded notations by omitting hereafter the specific
$(\underline{i},\underline{j})$ from all expressions.
In particular, given $(\underline{i},\underline{j})$,  
to each $x \in \SA=\SA(\underline{i},\underline{j})$ corresponds 
a unique vector $\underline{y} = (y_0,\ldots,y_L)$ of 
\emph{base points} $y_k = y_k(x) \in \SA^{\star}_{\twoD,k}$
(with $y_k(x)$ the closest point to  $(x_1,x_2)$ in $\SA^{\star}_{\twoD,k}$;
See Figure~\ref{fig:a2d} for an illustration of $\SA^\star_{\twoD,k}$ and $x \mapsto y_k(x)$).  We further let 
\begin{equation}\label{def:A2Dk}
\SA_{\twoD,k} :=
\{ y \in \SA^\star_{\twoD,k} : y = y_k(x) \hbox{ for some } x \in \SA \} \,,
\end{equation} 
denote the collection of all possible $k$-th level base points,
using the short notation $\SA_{\twoD}$, $R$, $R'$, $R''$
and $y(x)$ for $\SA_{\twoD,0}$, $R_0$, $R_0'$, $R_0''$
and $y_0(x)$, respectively.

\begin{figure}[ht!]
\begin{center}
\includegraphics[scale=0.85, page=1]{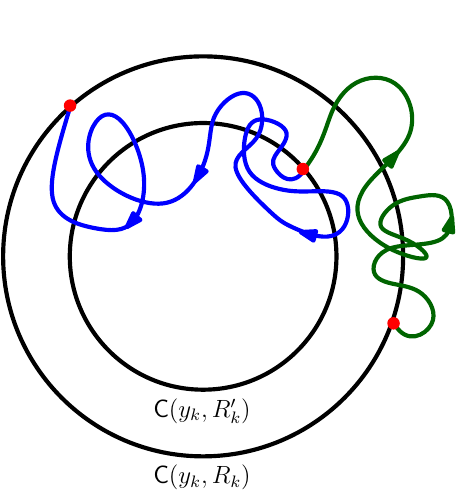}	
\end{center}
\caption{\label{fig:cyl_excursions} The $2D$ projection of an $R_k$-excursion 
of the random walk, from the boundary of a cylinder of radius $R_k$ back to itself via the boundary of a concentric cylinder of radius $R_k'$.  Indicated in dark green (resp.\ blue) is the external (resp.\ internal) part of the excursion.  
}
\end{figure}

Next, enumerating over $x \in \SA$ yields the disjoint $3$D-annuli 
of outer radius $r$ and inner radius $r'$, between 
the Euclidean balls $\ball(x,r)$ and $\ball(x,r')$ in $\ttorus$. 
For each $0 \leq k \leq L$, consider also the disjoint annuli 
of outer and inner radii
$R_k$ and $R'_k$, respectively, between
the cylinders $\cyl(y_k,R_k)$ and $\cyl(y_k,R_k')$ 
of height $h$ in $\ttorus$, based on the $2$D Euclidean
disks centered at $y_k \in \SA_{\twoD,k}$. 
As illustrated in Figure~\ref{fig:cyl_excursions},
\begin{figure}[ht!]
\begin{center}
\includegraphics[scale=0.85, page=2]{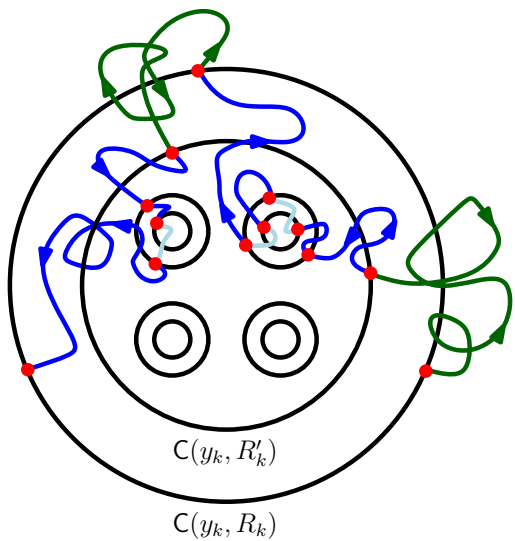}	
\end{center}
\caption{\label{fig:cyl_excursions_tree} The $R_k$-excursions across disjoint cylindrical annuli at different scales decompose into a tree structure, with
the internal part of any $R_{k-1}$-excursions (light blue), within the internal part of some $R_k$-excursion (blue). For well-separated annuli, 
the entrance and exit points of an $R_{k-1}$-excursion are approximately independent of the entrance and exit points of the parent $R_k$-excursion.}
\end{figure}
for any $k$, each cylindrical annulus decomposes the path
of the \abbr{srw} on $\ttorus$ into $R_k$-excursions. Each such excursion 
starts at the outer cylinder boundary and run until hitting the
inner cylinder boundary (which we call the excursion's
\emph{external part}), then goes back till exiting the
outer cylinder (called the excursion's \emph{internal part}).  
Note that for each $k$, conditional on their starting
and ending points, the internal parts of various
$R_k$-excursions of our collection of cylindrical annuli
are mutually independent of each other. For $n$ large enough
so \eqref{eq:sep-scale} holds, by the hierarchical structure of 
the sub-lattices 
$\SA^\star_{\twoD,k}$, the vector $\underline{y}$ associated with $x \in \SA$
is uniquely determined by $y(x)$. More generally,
each $R_{k-1}$-sized cylindrical annulus centered 
at $y \in \SA_{\twoD,k-1}$, $k \ge 1$, must be strictly inside 
$\cyl(y_k,R'_k)$ for \emph{some uniquely specified} $y_k \in \SA_{\twoD,k}$. 
Hence, as illustrated in 
Figure~\ref{fig:cyl_excursions_tree}, 
the $R_{k-1}$-excursions 
of the $y$-centered annulus decompose the internal parts of each of 
the $R_k$-excursions for the annulus centered at $y_k$. Similarly,
for $n$ large enough and $x \in \SA$, each $\ball(x,r)$ is 
strictly inside $\cyl(y(x),R')$, decomposing the internal
parts of each of the $R$-excursions of the cylindrical annulus
around $y(x)$, into what we call $r$-excursions (i.e., whose 
external part starts at $\partial \ball(x,r)$ and run till 
hitting $\ball(x,r')$, followed by the internal part up to 
the exit from $\ball(x,r)$). Here again, conditional on their starting
and ending points the internal parts of the various $r$-excursions
associated with the collection $\SA$ are independent of each
other.

As shown in Section \ref{subsec::typ-val}, 
\begin{equation}
\label{eq:typical-counts}
\cylE^\star(s): = 2 s \frac{(\log n)^2}{\log (R/R')} \quad\text{and}\quad
\ballE^\star(s) := \frac{4 s r'}{a} \log n ,
\end{equation}
are the typical counts of $R_k$-excursions and $r$-excursions, respectively,
by time $s \tcovp$. Utilizing these, we next summarize which
large deviations of the counts of cylindrical and ball excursions
around $x \in \SA$, are of concern in our proof of \eqref{eq:js-yva}.
We will show that \abbr{whp}, at least $\cylE^\star(s)$ of the $R_L$-excursions
around any $y_L \in \SA_{\twoD,L}$ are completed by time $s' \tcovp$. Hence, 
our concepts of a $z$-type point $x \in \SA$ and a $\underline{z}$-type 
$y(x) \in \SA_{\twoD}$, amount to having about $z^2 \, \ballE^\star(s)$ 
of the corresponding $r$-excursions around $x$, or respectively, 
having about $z_k^2 \, \cylE^\star(s)$ 
of the corresponding $R_k$-excursions around $y_k(x)$, 
$k=0,\ldots,L-1$, during the first $R_L$-excursions
around $y_L(x)$.
\begin{defn}\label{dfn:ztype}
Fix $s \in (\Psi(\phi),s')$ and small $\eta>0$ such that 
$1/\eta$ is integer.  
\begin{enumerate}
\item[(a)] For $\underline{z}=(z_0,\ldots,z_L)$ with
$z_k \leq z_L = 1$ and $z_k \in \eta \N$, $k=0,\ldots,L-1$,
we say that
$\underline{y}=(y_0,\ldots,y_L)$, or equivalently, that
$y_0 \in \SA_{\twoD,0}$, is \emph{of $\underline{z}$-type}
if the first $(z_k-2\eta)^2 \cylE^\star(s)$ of the  $R_k$-excursions
for the cylindrical annulus centered at $y_k$, are completed
within the first $\cylE^\star(s)$ $R_L$-excursions for
cylindrical annulus centered at $y_L$. In case $z_k < 1$ we 
further require that the first $(z_k-\eta)^2 \cylE^\star(s)$ are not
completed during these $R_L$-excursions. 
\item[(b)] Similarly, $x \in \SA$ is called \emph{of $z$-type}
(for  $z \in \eta \N$), if the first
$(z- 3 \eta)^2 \ballE^\star(s)$ of the $r$-excursions around $x$,
are completed within the first $\cylE^\star(s)$ $R_L$-excursions for
cylindrical annulus centered at
$y_L(x)$, where for $z<1$ we also require that the first
$(z-2\eta)^2 \ballE^\star(s)$ of those $r$-excursions are not completed
during said $R_L$-excursions.
\end{enumerate}
\end{defn}

Next, note that $\SA \cap \, \widehat{\CU}_{s'}$ is the disjoint union of
\begin{equation}\label{dfn:unvis-z}
\widetilde{\CU}_{s',\underline{z}} :=
\{ x \in \SA \cap \, \CU_{s'} : y(x) \mbox{ of } \underline{z}\mbox{-type} \},
\end{equation}
over the at most $\kappa_o = \eta^{-L}$ possible $\underline{z}$-types
induced on $\SA_{\twoD}$ by the \abbr{srw} $X$ on $\ttorus$. Likewise, 
$\SA \cap \, \widehat{\CU}'_{s'}$ is the disjoint union of the sets
$\widetilde{\CU}'_{s',\underline{z}'}$ defined in terms of the types
$\underline{z}'$ induced on $\SA_{\twoD}$ by the independent 
\abbr{srw} $X'$ on $\ttorus$. We set 
\begin{equation}\label{def:Gij}
\widetilde{\CG}:=\bigcap_{\underline{z}} \CG_{\underline{z}} \,,
\end{equation}
where each event $\CG_{\underline{z}}$ on the path of 
the \abbr{srw} $X$ on $\ttorus$ up to time $s' \tcovp$
is now associated with a specific choice of 
both $\SA=\SA(\underline{i},\underline{j})$ and $\underline{z}$
(see Definition \ref{dfn:Gz} below). Then, with
$\kappa := \kappa' \kappa_o^2$ 
for $\kappa'$ of \eqref{def:kapp}, 
similarly to our move from \eqref{eq:jason-yuval} to 
\eqref{eq:js-yva}, we get by the union bound and H\"older's inequality 
that~\eqref{eq:js-yva} holds provided that 
as $n \to \infty$, for any choice of 
$(\underline{i},\underline{j})$ and any two
types $\underline{z}$, $\underline{z}'$,
\begin{align}\label{eq:js-yv-lhs}
\kappa \p[{\CG}_{\underline{z}}^c] &\to 0 \,, \\
\E \big[2^{\kappa |\widetilde{\CU}_{s',\underline{z}} \cap\,
\widetilde{\CU}'_{s',\underline{z}'}|}\, &
{\bf 1}_{\CG_{\underline{z}}}
{\bf 1}_{{\CG'}_{\underline{z}'}} \big] \to 1 
\label{eq:js-yv-rhs}
\end{align}
(with ${\CG'}_{\underline{z}'}$ corresponding to the 
second, independent copy $X'$ of the \abbr{srw} on $\ttorus$).
We proceed to define the truncation events
$\CG_{\underline{z}}$ for \eqref{eq:js-yv-lhs}--\eqref{eq:js-yv-rhs}.
\begin{defn}\label{dfn:Gz}
For each $s < s'$, $\eta>0$ and type $\underline{z}$, let
${\CG}_{\underline{z}} = {\CG}_{\underline{z}}(s,\eta)$ be the event consisting of:
\begin{enumerate}
\item[(a)] By time $s' \tcovp$ the \abbr{srw} on $\ttorus$ completes
for each $R_L$-sized cylindrical annulus centered at $y_L \in \SA_{\twoD,L}$
the corresponding first $\cylE^\star(s)$ excursions.
\item[(b)] For $\rho_k=k/L$, $k=0,\ldots,L-1$, there are
at most $n^{2 b_{\rho_k}(z_k)}$ points $y_k \in \SA_{\twoD,k}$
to which corresponds some $y_0 \in \SA_{\twoD,0}$ of $\underline{z}$-type.
\item[(c)] If $x \in \SA$ is such that $y_0(x)$ is of
$\underline{z}$-type (cylindrical annuli), then 
for some $z \ge z_0$ the point $x$ is
also of $z$-type (in terms of $r$-excursions). 
\end{enumerate}
\end{defn}

From Definition~\ref{dfn:Gz}(b), we see that 
under the event $\CG_{\underline{z}}$ there is no $y(x)$ of 
$\underline{z}$-type, unless $b_{\rho_k}(z_k) \ge 0$ 
for all $0 \le k < L$. This is precisely the following
requirement \eqref{eq:z-trnc} that  
$\underline{z}$ be admissible (so it suffices to 
establish  
\eqref{eq:js-yv-rhs} only for admissible 
types $\underline{z}$, $\underline{z'}$).
\begin{defn}\label{dfn:adm-z}
Fixing $s \ge 1$, we say that a $\underline{z}$-type is admissible, if and only if  
\begin{equation}\label{eq:z-trnc}
\sqrt{s} \le \min_{k=0,\ldots,L-1} \; \Big\{ \frac{1-\rho_k}{1-z_k} \Big\} 
\end{equation}
for $\rho_k=k/L$, as in Definition \ref{dfn:Gz}.
\end{defn}

\noindent Denoting by $H_{x,z}$ the event of not hitting $x$ during the 
first $z^2 \ballE^\star(s)$ of the $r$-excursions of $X$ around $x$,
requirements (a) and (c) of Definition~\ref{dfn:Gz} imply 
that under the event $\CG_{\underline{z}}$ the set  
$\widetilde{\CU}_{s',\underline{z}}$ of \eqref{dfn:unvis-z} is a subset of 
\begin{equation}\label{dfn:unvis-z-ex}
\CU_{s,\underline{z}} := 
\{ x \in \SA : y(x) \mbox{ of } \underline{z}\mbox{-type}, \quad  
H_{x,z_0-3\eta} \mbox{ occurs}  \}
\end{equation}
(see also Definition~\ref{dfn:ztype} of $z$-type). Similarly,
$\widetilde{\CU}'_{s',\underline{z}'} \subseteq \CU'_{s,\underline{z}'}$
under the event $\CG'_{\underline{z}'}$. Hence, upon proving
\eqref{eq:js-yv-lhs} for $\CG_{\underline{z}}$ of Definition \ref{dfn:Gz},
it suffices to show that for  
any admissible $\underline{z}$-type and $\underline{z}'$-type,
as $n \to \infty$,
\begin{equation}\label{eq:js-yv}
\E \big[2^{\kappa |\CU_{s,\underline{z}} \cap\,
\CU'_{s,\underline{z}'}|}\,
{\bf 1}_{\CG_{\underline{z}}}
{\bf 1}_{{\CG'}_{\underline{z}'}} \big] \to 1 \,.
\end{equation}

\subsection{Variational formulas and admissible annuli profiles}\label{sec:ubd-gen-str}
We first establish the variational representations 
of Lemma \ref{lem:Psi}
for $\Psi(\phi)$ of~\eqref{eq:mix-bd} whose relevance to the asymptotic structure of $\CU(s \tcovp)$
has already been discussed in Section~\ref{subsec:pf-outline}. 
\begin{proof}[Proof of Lemma \ref{lem:Psi}]
First, set $\sh(\rho):= \sqrt{\rho(\phi+\rho/2)}$, $t:=\sqrt{s}$ and
\begin{equation}\label{eq:t-star}
t_\star = \sup_{\rho \in [0,1]} \{ \sh(\rho) + 1 -\rho \} \,.
\end{equation}
The conditions $b_\rho(z) \ge 0$ and $\alpha_\rho(z) \ge \rho$
are then
re-expressed as $t z \ge t - (1-\rho)$ and $t z \ge \sh(\rho)$, respectively.
So, with the optimal choice being $z=z_\star:=1-(1-\rho)/t$, it
follows that~\eqref{eq:var-pbm} holds if and only if $t \ge t_\star$.
That is, $\Psi(\phi) = t_\star^2$. Further, considering at $t=t_\star$
the optimal $z_\star = \sh(\rho)/(\sh(\rho)+1-\rho)$, yields the identity~\eqref{eq:t-lbd}. Finally, in~\eqref{eq:t-star}
the optimal choice is $\rho=\rho_\star = (\sqrt{2} -1) \phi$,
but in case $\phi \ge 1/(\sqrt{2}-1)$ it is out of range and one needs to settle instead for $\rho=1$. One easily checks that $\sh(\rho_\star)=\phi/\sqrt{2}$, while $\sh(1)=\sqrt{\phi+1/2}$,
hence with $t_\star$ monotone increasing
in $\phi$ it is easy to confirm from the preceding that
$t_\star^2 = \Psi(\phi)$ is given by the explicit formula
\eqref{eq:mix-bd}, as claimed.
\end{proof}

Denoting hereafter $\alpha_0(\cdot)$ of \eqref{eq:b-def}
by $\alpha(\cdot)$, we proceed with an 
analysis lemma that is key to the success of our scheme for bounding the exponential moments as in~\eqref{eq:js-yv}
for all admissible $\underline{z}$-types and $s > \Psi(\phi)$.
\begin{lem}\label{lem:calc}
Let $\Psi_{L,\eta}(\phi)$ denote, per given $L$ and $\eta$,
the minimal value of $s \ge 1$, such that if
type $\underline{z}$ is \emph{admissible} 
(see Definition \ref{dfn:adm-z}),
then for any $m=0,\ldots,L$,
\begin{equation}
\label{eq-def-gamma}
\gamma_{m,\eta} (\underline{z}) := \alpha (z_0-4 \eta)
- m \eta - \frac{1}{L} - \sum_{k=1}^m \Big[ \frac{1}{L}
- 2 s L ( z_k - z_{k-1} - 2\eta)_+^2\Big] \ge \eta.
\end{equation}
Then, with $\Psi(\cdot)$ given by the variational problem
\eqref{eq:var-pbm}, we have that
\begin{equation}
\label{eq:Psi-limit}
\Psi(\phi) = \limsup_{L \to \infty} \lim_{\eta \to 0} \{ \Psi_{L,\eta} (\phi) \} \,.
\end{equation}
\end{lem}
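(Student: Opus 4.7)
The plan is to recognize $\gamma_{m,\eta}(\underline z) \ge \eta$ as a discrete version of a continuous variational inequality whose critical $s$ coincides with $\Psi(\phi)$ by Lemma~\ref{lem:Psi}. Writing $\rho_m := m/L$ and $u_*(\rho) := 1 - (1-\rho)/\sqrt{s}$, the Cauchy--Schwarz inequality combined with $a_+ \ge a$ yields
\[
\sum_{k=1}^m (z_k - z_{k-1} - 2\eta)_+^2 \;\ge\; \frac{1}{m}\Big(\sum_{k=1}^m (z_k - z_{k-1} - 2\eta)_+\Big)^2 \;\ge\; \frac{(z_m - z_0 - 2m\eta)_+^2}{m},
\]
so the defining inequality of $\Psi_{L,\eta}$ reduces to
\[
\alpha_0(z_0 - 4\eta) + \frac{2 s\,(z_m - z_0 - 2m\eta)_+^2}{\rho_m} \;\ge\; (m+1)\Big(\eta + \tfrac{1}{L}\Big).
\]
In the limit $\eta \to 0$, $L \to \infty$ with $\rho_m \to \rho$ and $(u,w) := (z(0), z(\rho)-z(0))$, this becomes $s u^2/\phi + 2 s w_+^2/\rho \ge \rho$ for every admissible pair with $u \ge u_*(0)$ and $u + w \ge u_*(\rho)$; the full-interval admissibility \eqref{eq:z-trnc} reduces to these two endpoint conditions, because $z$ may be extended along the admissibility boundary $z(\rho') = u_*(\rho')$ outside $[0,\rho]$ at zero integral cost.

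For the upper bound, a direct calculus argument shows that, for each $\rho \in (0,1]$, the minimum of $s u^2/\phi + 2 s w_+^2/\rho$ over admissible $(u,w)$ equals $2 s u_*(\rho)^2/(2\phi + \rho)$ in the interior regime $\sqrt{s} \le 2\phi + 1$, attained at $u^\star = 2\phi u_*(\rho)/(2\phi+\rho)$, $w^\star = u_*(\rho) - u^\star$, and equals $(\sqrt s -1)^2/\phi + 2\rho \ge \rho$ trivially in the complementary regime. Substituting $u_*(\rho) = (\sqrt s - 1 + \rho)/\sqrt s$, the interior inequality $2 s u_*(\rho)^2/(2\phi+\rho) \ge \rho$ rearranges to $\sqrt s \ge h(\rho) + 1 - \rho$ with $h(\rho) := \sqrt{\rho(\phi+\rho/2)}$, which by \eqref{eq:t-star} holds for every $\rho \in [0,1]$ precisely when $s \ge \Psi(\phi)$. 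A direct check from \eqref{eq:mix-bd} shows $\sqrt{\Psi(\phi)} \le 2\phi + 1$ for all $\phi > 0$, so the interior regime is always the operative one. For any $s > \Psi(\phi)$ there is therefore a strict gap $\delta(s) > 0$ in the continuous bound that absorbs both the $-2\eta$ shifts and the cumulative $(m+1)(\eta + 1/L)$ cost, once $\eta$ is small and $L$ is large; thus $\gamma_{m,\eta}(\underline z) \ge \eta$ for every admissible $\underline z$ and every $m$, giving $\Psi_{L,\eta}(\phi) \le s$.

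For the matching lower bound, fix $s < \Psi(\phi)$, choose $\rho_\star \in (0,1]$ with $\sqrt s < h(\rho_\star) + 1 - \rho_\star$ (available by Lemma~\ref{lem:Psi}), set $m^\star := \lfloor L \rho_\star \rfloor$, and define $\underline z$ by linearly interpolating from $z_0 = u^\star$ to $z_{m^\star} = u_*(\rho_\star)$ on $k \in [0, m^\star]$ and by $z_k = u_*(\rho_k)$ for $k > m^\star$, each value rounded to the nearest multiple of $\eta$. Admissibility holds since $u^\star \ge 1 - 1/\sqrt s$ (again using $\sqrt s \le 2\phi + 1$) and the slope of the linear piece is bounded by $1/(\sqrt s\, L)$, the slope of the admissibility boundary. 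Because Cauchy--Schwarz is tight on linear profiles, $\gamma_{m^\star,\eta}(\underline z)$ converges, as $\eta \to 0$ then $L \to \infty$, to $2 s u_*(\rho_\star)^2/(2\phi + \rho_\star) - \rho_\star < 0$, so $\gamma_{m^\star,\eta}(\underline z) < \eta$ for $\eta$ small and $L$ large, whence $\Psi_{L,\eta}(\phi) > s$. Combined with the upper bound this yields \eqref{eq:Psi-limit}. The principal obstacle is the bookkeeping of the error terms $-2\eta$, $-(m+1)\eta$, and $-(m+1)/L$, which can individually be of order comparable to $\rho$; they must all be absorbed into the strict gap $\delta(s)$ available only for $s$ strictly above $\Psi(\phi)$, which forces the order of limits ($\eta \to 0$ before $L \to \infty$) and accounts for the $\limsup$ over $L$ in \eqref{eq:Psi-limit}.
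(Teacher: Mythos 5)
Your proof is correct and follows essentially the same route as the paper's, differing mainly in notation and in how the final identification with $\Psi(\phi)$ is carried out. Both arguments use Cauchy--Schwarz to reduce the worst case to a profile whose increments are constant on $\{1,\ldots,m\}$, then take the limits $\eta\to 0$, $L\to\infty$ and match the resulting continuous problem to the variational formula of Lemma~\ref{lem:Psi}. The paper passes to the increment variables $\Delta_k = tL(z_k-z_{k-1})$, recognizes admissibility cleanly as $\delta_r\in[0,1]$ for all $r$, and invokes the dual identity $\alpha_\rho(w)=\inf_{z\le w}\{\alpha(z)+2s(w-z)^2/\rho\}$ from \eqref{eq:alp-rho}; you instead work directly with $z_k$ and the admissibility boundary $u_*(\rho)=1-(1-\rho)/\sqrt{s}$, compute the optimal $(u^\star,w^\star)$ by hand, and reduce $2su_*(\rho)^2/(2\phi+\rho)\ge\rho$ to $\sqrt{s}\ge h(\rho)+1-\rho$. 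Your calculus is right, including the check that $\sqrt{\Psi(\phi)}\le 2\phi+1$ so the interior critical point is always operative. Two places deserve a touch more care. First, the phrase ``the defining inequality of $\Psi_{L,\eta}$ reduces to'' the collapsed inequality is one-directional: Cauchy--Schwarz only gives a \emph{sufficient} condition for $\gamma_{m,\eta}\ge\eta$, hence an upper bound on $\Psi_{L,\eta}(\phi)$; tightness (and hence the matching lower bound) comes only from the separate construction of the linear profile, exactly as you then do. Second, the reduction of admissibility to the two endpoint conditions should be justified in both directions: for the upper bound it is automatic because the Cauchy--Schwarz bound depends only on $(z_0,z_m)$ and admissibility implies the endpoint constraints; for the lower bound you need that every endpoint-admissible pair is realized by a fully admissible profile, which holds because the boundary $u_*(\rho)$ is affine so linear interpolation between a point on or above the boundary and an endpoint on the boundary stays admissible -- you gesture at this but it's worth stating. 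Finally, the uniformity of the gap $\delta(s)>0$ over $\rho\in[0,1]$ (needed to absorb the $O(L\eta)$ and $O(1/L)$ errors for all $m$ simultaneously) follows from continuity and compactness and is worth one sentence.
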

\begin{proof}
Recall that $z_L=1$ and note that the limit
$$
\Psi_L(\phi) := \lim_{\eta \to 0} \{\Psi_{L,\eta}(\phi)\} ,
$$
exists and corresponds to the requirement that
$\gamma_{m,0}(\underline{z}) \ge 0$ for $m=0,\ldots,L$
and admissible $\underline{z}$. Further, setting
$\Delta_k := t L ( z_k - z_{k-1})$, for $k=1,\ldots,L$
and $t:= \sqrt{s}$ we have from \eqref{eq:b-def} that
$$
\phi \, \alpha(z_0) = (t z_0)^2 = \Big(t - \frac{1}{L} \sum_{k=1}^L \Delta_k \Big)^2,
$$
yielding that $\sqrt{\Psi_L(\phi)}$ is merely
the infimum over all $t \ge 1$ such that for $m=0,\ldots,L$,
\begin{equation}\label{eq:PsiL}
(t - \frac{1}{L} \sum_{k=1}^L \Delta_k)^2 \ge
\phi \, \Big( \frac{m+1}{L} - \frac{2}{L} \sum_{k=1}^m (\Delta_k)_+^2 \Big) ,
\end{equation}
whenever $\underline{z} \in [0,1]^{L+1}$
satisfies~\eqref{eq:z-trnc}.
That is, denoting by $\mathcal{D}$ the collection of all
$\underline{\Delta} := (\Delta_1,\ldots,\Delta_L) \in \reals^L$ such that
\begin{equation}\label{eq:admis-del}
\delta_r := \frac{1}{L-r} \sum_{k=r+1}^L \Delta_k \in [0,1]\quad
\forall 0 \le r < L,
\end{equation}
we have that
$$
\sqrt{\Psi_L(\phi)} = \max_{m=0}^L \max_{\underline{\Delta} \in \mathcal{D}} \{t_m (\underline{\Delta}) \} ,
$$
with $t_m(\underline{\Delta})$ the smallest $t \ge 1$ for which
\eqref{eq:PsiL} holds, per given $m$ and $\underline{\Delta}$.

The value of $t_m(\underline{\Delta})$ depends only
on $\delta_m$ and $(\Delta_1,\ldots,\Delta_m)$. Further,
given $\delta_m$ and $\Delta := m^{-1}  \sum_{k=1}^m \Delta_k$, 
by Cauchy-Schwarz the maximal value of $t_m(\underline{\Delta})$ is attained
when $\Delta_k=\Delta$ for all
$1 \le k \le m$.
Thus, setting $\delta=\delta_m$, we deduce that
$\sqrt{\Psi_L(\phi)}$ is bounded above by
the minimal $t \ge 1$ such that
\begin{equation}\label{eq:PsiL-simp}
(t - (1-\rho) \delta - \rho \Delta)^2 \ge \phi \rho [1 - 2 (\Delta)_+^2] + \frac{\phi}{L} ,
\end{equation}
for any $\delta \in [0,1]$, $\Delta \in \reals$
and $\rho \in [0,1]$ for which $\rho L = m$ is integer valued.
Note that~\eqref{eq:PsiL-simp} trivially holds whenever $\Delta>1$ and $\rho>0$ (whereas for $\rho=0$ the
value of $\Delta$ is irrelevant).
Further, since $t \ge 1 \ge \rho, \delta \ge 0$, if
\eqref{eq:PsiL-simp} holds for $\Delta=0$, it also holds for any $\Delta<0$. Consequently, it suffices to consider
\eqref{eq:PsiL-simp} only for $\Delta,\delta \in [0,1]$.
Each choice of $(\Delta,\delta)$ in the latter range
corresponds to
$\underline{\Delta} = (\Delta,\ldots,\Delta,\delta,\ldots,\delta)$
in $\mathcal{D}$, hence we conclude that the right-side of~\eqref{eq:Psi-limit} \emph{equals} the minimal
$s=t^2 \ge 1$ satisfying
\eqref{eq:PsiL-simp} for all $\delta \in [0,1]$, $\rho \in (0,1]$
and $\Delta \ge 0$. To match this with~\eqref{eq:var-pbm}
we equivalently set $(1-\rho) \delta = t (1-w)$
and $\rho \Delta = t (w-z)$ with $1 \ge w \ge z$ such that
$b_\rho(w) \ge 0$ for $s=t^2$ (corresponding to $\delta \le 1$).
This transforms~\eqref{eq:PsiL-simp}, in terms of
$z$ and $w$, to the inequality
\begin{equation}\label{eq:la-under-2}
\alpha(z) + \frac{2 s(w-z)^2}{\rho} \ge \rho \,.
\end{equation}
Now, by elementary calculus we find that
\begin{equation}
\label{eq:alp-rho}
\alpha_\rho (w) =
\inf_{z \le w} \Big\{ \alpha(z) + \frac{2s (w-z)^2}{\rho}
\Big\}
\end{equation}
(with infimum attained at $z_\star := (2/\rho) w/(2/\rho+1/\phi)$).
Comparing the preceding with~\eqref{eq:var-pbm} we thus conclude
that~\eqref{eq:Psi-limit} holds, as claimed.
\end{proof}

\subsection{Tail behavior for admissible excursion counts}

Our approach to proving the upper bound in Theorem \ref{thm:cut-off}
is to establish \eqref{eq:js-yv-lhs} and \eqref{eq:js-yv} for
\begin{equation}\label{eq:sp-s-val}
s'= s + \epsilon = \Psi_{L,\eta}(\phi) + 2\epsilon,
\end{equation} 
when $n \to \infty$ followed by $M \to \infty$. As explained before,
this would imply that $\tmix \le (s'+o(1)) \tcovp$ and consequently, by
Lemma~\ref{lem:calc}, upon taking $\eta \downarrow 0$,
$L \to \infty$ and finally $\epsilon \downarrow 0$ we get that  
$\tmix \le (\Psi(\phi)+o(1)) \tcovp$. 

\smallskip
To this end, we use the following notation.
\begin{defn}\label{defn:NB}
Let $\ncylE_{y_k,k,j,w}$, for $k<j \le L$ and
$w \in [0,1]$ be the number of $R_k$-excursions for $y_k \in \SA_{\twoD,k}$,
completed during the first $w^2 \cylE^\star(s)$ $R_j$-excursions for
the corresponding $y_j \in \SA_{\twoD,j}$ (with $\ncylE_y := \ncylE_{y,0,L,1}$).  Let $\ncylE_{y_L,L}$ be the number of $R_L$-excursions around $y_L \in \SA_{\twoD,L}$ which are completed by time $s' \tcovp$. Next, for $x' \in \ball(x,R'')$ and $z \ge \eta$, let
$\nballE_{x,z}^{x'}$ be the number of $r$-excursions around $x \in \SA$
during the first $z^2 \cylE^\star(s)$ excursions of the $R_0$-cylindrical 
annulus centered at $x'$.

\end{defn}
As detailed in Section \ref{sec:pf-z-types}, both 
\eqref{eq:js-yv-lhs} and \eqref{eq:js-yv} follow from  
the next two lemmas, whose proofs are provided
in Sections~\ref{sec:3d} and~\ref{sec:2d}.
\begin{lem}
\label{lem:3d}
Fix $s>1 \ge z > \eta>0$. If $M \ge M_0(\eta,z)$ 
and $n \ge n_0(M)$, then  
\begin{equation}
\label{eq:bd3}
\p[H_{x,z}] \le n^{-\alpha(z-\eta)} \qquad \forall x \in V_n \,.
\end{equation}
Further, uniformly over $x \in V_n$ and 
$x' \in \ball(x,R'')$, as $n \to \infty$,
\begin{equation}
\label{eq:bd2}
 n^2 (\log n) \p \big[ \nballE^{x'}_{x,z}  < (z-\eta)^2 \ballE^\star(s) \big]
\to 0
\end{equation}
\end{lem}
\begin{Remark-lbl}\label{rmk-bd3}
The bound \eqref{eq:bd3} remains in effect when conditioned 
on $X_0=v$ and the start and end points of all $r$-excursions around $x$ 
(see Proposition \ref{prop::3d_tail_probability}). 
Similarly, from \eqref{eqn::too_few_3d_excursions}
the convergence in \eqref{eq:bd2} holds uniformly with 
respect to 
the position of $x$ within $\ball(x',R'')$ and
the start/end points of the $R$-excursions around $x'$.
\end{Remark-lbl}
\begin{lem}\label{lem:2d}
For any fixed $s',s>1$, any positive integer $L$,
$w, z \ge \widetilde{\eta} \ge 0$ and $L \ge j > k \ge 0$,
we have for all $M \ge M_1(\widetilde{\eta},z,w,j,k)$ large enough,
as $n \to \infty$, that uniformly over $y_L \in \SA_{\twoD,L}$
and $y_k \in \SA_{\twoD,k}$,
\begin{align}
\label{eq:bd4}
& n^{M} \p[|\ncylE_{y_L,L} - \cylE^\star(s')| \ge \widetilde{\eta}\, \cylE^\star(s') ]
\to 0 ,\\
\label{eq:bd1}
&
\limsup_{n \to \infty}
\Big| \frac{\log \p[ \ncylE_{y_k,k,j,w}(s) \le
(z-\widetilde{\eta})^2 \cylE^\star(s)]}{\log n} + \frac{2 s (w - z)_+^2}{\rho_j-\rho_k} \Big| \leq \widetilde{\eta} \,.
\end{align}
\end{lem}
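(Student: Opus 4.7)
The plan is to reduce both claims to classical concentration / large-deviation estimates for sums of (approximately) i.i.d.\ random variables, via the strong Markov property applied at excursion endpoints, using as essential input a localized form of the thin-cylinder effective-resistance formula from Section~\ref{sec:cover}.

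For \eqref{eq:bd4}, I would write $\ncylE_{0,L}=\inf\{k:\sum_{i=1}^k\tau_i>s'\tcovp\}$ where $\tau_1,\tau_2,\ldots$ are the i.i.d.\ (up to starting-point averaging) durations of successive $R_L$-excursions. The event $|\ncylE_{0,L}-\cylE^\star(s')|\ge\eta\cylE^\star(s')$ forces the partial sum $\sum_{i=1}^{\lfloor(1\pm\eta)\cylE^\star(s')\rfloor}\tau_i$ to deviate from its mean by a constant factor. Standard torus estimates (via $\tmix(\ttorus)=O(n^2)$ and Markov) give the $\tau_i$ uniformly bounded exponential moments, so Bernstein's inequality for i.i.d.\ sums yields
\begin{equation*}
\p\bigl[\,|\ncylE_{0,L}-\cylE^\star(s')|\ge\eta\cylE^\star(s')\,\bigr]\le\exp\bigl(-c\eta^2\cylE^\star(s')\bigr)=\exp\bigl(-\Theta_\eta((\log n)^2)\bigr),
\end{equation*}
which beats every $n^{-M}$.

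For \eqref{eq:bd1}, by the strong Markov property at the end of each $R_j$-excursion,
\begin{equation*}
\ncylE_{0,k,j,w}(s)=\sum_{i=1}^{K}\xi_i,\qquad K=\lfloor w^2\cylE^\star(s)\rfloor,
\end{equation*}
where $\xi_i$ counts the $R_k$-excursions about $0$ completed during the $i$-th $R_j$-excursion about $y_j$; a Harnack/coupling estimate on $\partial\cyl(y_j,R_j)$ reduces these to genuinely i.i.d.\ variables without affecting the leading-order rate. An electric-potential computation based on Lemma~\ref{lem-green-function-2}, localized at the cylindrical scales $R_k,R'_k,R_j$, identifies the law of $\xi$ as a compound Bernoulli--Geometric: with probability $1-p$ the walk fails to hit $\partial\cyl(0,R_k)$ during the $R_j$-excursion, and otherwise the count is $\mathrm{Geom}(p)$, where
\begin{equation*}
p=\frac{\log M}{(\rho_j-\rho_k)\log n}(1+o(1)),\qquad \E[\xi]=1+o(1).
\end{equation*}
Taylor-expanding $\Lambda(\theta)=\log\E[e^{\theta\xi}]$ at $\theta=-\alpha p$ gives $\Lambda(-\alpha p)=-p\alpha/(1+\alpha)+o(p)$, whence the Legendre transform $\Lambda^{*}(y)=p(1-\sqrt{y})^2(1+o(1))$, attained at $\alpha^{*}=1/\sqrt{y}-1$, uniformly on compact subsets of $(0,1)$. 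Cramer's theorem with $y=(z/w)^2$ and the identity $Kp=2sw^2\log n/(\rho_j-\rho_k)$ then yields both tails
\begin{equation*}
-\log\p\bigl[\ncylE_{0,k,j,w}(s)\le(z-\eta)^2\cylE^\star(s)\bigr]=(1+o(1))\,\frac{2s(w-z)_+^2\log n}{\rho_j-\rho_k},
\end{equation*}
which is \eqref{eq:bd1} after absorbing $o(1)$ into $\eta$.

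The main obstacle is establishing the matching \emph{lower} bound on $\p$ in \eqref{eq:bd1} (needed for the absolute value in the $\limsup$). This requires a tilted-measure argument: one exhibits the exponential change of measure making $\E_{\tilde\p}[\xi]=(z/w)^2$, verifies a quantitative CLT for the tilted i.i.d.\ sum to confirm the target event retains non-vanishing probability, and pays the cost of the tilt via the exact MGF identity. A secondary technical point is the boundary scale $k=0$ (with $R''_0=h$), where the thin-cylinder effective-resistance approximation carries an $O(1/h)$ correction comparable to $\log M$; fortunately this contributes only an additive $O(1/L)$ error to the exponent, vanishing in the $L\to\infty$ limit of Lemma~\ref{lem:calc}.
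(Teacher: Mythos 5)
Your proposal follows essentially the same route as the paper: for \eqref{eq:bd4} both reduce to concentration of partial sums of excursion durations (the paper cites a ready-made concentration estimate from DPRZ rather than re-proving Bernstein from exponential moments, but the mechanism is identical); for \eqref{eq:bd1} both write $\ncylE_{0,k,j,w}(s)$ as a sum of $w^2 \cylE^\star(s)$ compound Bernoulli--Geometric contributions with success parameter $\asymp \log M/((\rho_j - \rho_k)\log n)$, compute the Legendre transform to obtain the rate $p(1-\sqrt{y})^2$, and prove the matching lower bound by exponential tilting. Your Legendre computation (optimal $\alpha^* = 1/\sqrt{y}-1$, exponent $2s(w-z)^2/(\rho_j-\rho_k)$) agrees exactly with the paper's $I_\kappa(z,w) = -(w-\sqrt{\kappa}z)^2$ evaluated at $\kappa = 1$. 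The one place your bookkeeping is looser than the paper's is in treating the Bernoulli parameter $p_{j\to k}$ and the Geometric parameter $p_{k\to j}$ as literally equal: the paper instead introduces a ratio $\kappa$, proves two-sided stochastic dominations by $Z_{w,s}(p,\kappa p)$ (Lemma~\ref{lem::excursion_count_approx}) and only sends $\kappa\to1$ at the end, which is what justifies the ``without affecting the leading-order rate'' step that you invoke somewhat informally. Also note the $\tau_i$ in your \eqref{eq:bd4} argument are of order $n^2$, so it is the normalized variables $\tau_i/\E\tau_i$, not the $\tau_i$ themselves, that have uniformly bounded exponential moments; this is a cosmetic slip and the Bernstein conclusion is unaffected.
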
 
\begin{Remark-lbl}\label{rmk-bd2d}
See Proposition \ref{prop::typical_2d_excursion_count} 
which implies \eqref{eq:bd4}. In Section \ref{sec:2d} we further
show that \eqref{eq:bd1} holds uniformly 
in $x \in \SA$ with $y_k(x)=y_k$ (i.e., over the relative position 
of $y_k$ in the $R_j''$-sized square centered at $y_j=y_j(x)$), 
and uniformly with respect to the start/end points of the 
$R_j$-excursions around $y_j$.
\end{Remark-lbl}

\subsection{The proof of \eqref{eq:js-yv-lhs} and \eqref{eq:js-yv}.}\label{sec:pf-z-types}
First, as soon as 
$(1-\widetilde{\eta}) s' > s$ we deduce from \eqref{eq:bd4}
upon taking the union over the at most $M^6$ possible values of
$y_L$, that requirement (a) in Definition~\ref{dfn:Gz}
is satisfied with probability going to one as $n \to \infty$. 
Next, for $k<L$ let $Y_k$ denote the number of 
$y_k \in \SA_{\twoD,k}$ to which corresponds some $y_0 \in \SA_{\twoD,0}$ 
of $\underline{z}$-type.

If $z_k<1$ it follows by Definition~\ref{dfn:ztype} that necessarily 
$\ncylE_{y_k,k,L,1} \le (z_k-\eta)^2 \cylE^\star(s)$
for any such $y_k$.
With
$|\SA^{\star}_{\twoD,k}| \le \lceil n/(2R_k) \rceil^2 \le n^{2 - 2 \rho_k}$
upon considering \eqref{eq:bd1} for $j=L$, 
$\widetilde{\eta} = (\eta/2)^2$, $w=1$ and $z=z_k-\eta+\widetilde{\eta}$, we see 
that for $n$ large enough, $\E (Y_k) \le n^{2 b_{\rho_k}(z_k)-\widetilde{\eta}}$. Hence,
by Markov's inequality and union over $0 \le k < L$,
we deduce that 
Definition~\ref{dfn:Gz}(b) also holds with probability
going to one as $n \to \infty$ (the case $z_k=1$ trivially
holds by the preceding bound on $|\SA^{\star}_{\twoD,k}|$).
In particular, as soon as $s (1-4\eta)^2 > 1$, necessarily $z_0 \ge 5\eta$,
whereupon if $y_0(x)$ is of $\underline{z}$-type
and $x$ is not of $z$-type for some $z \ge z_0$, then
$\nballE^{x'}_{x,z_0-2\eta} < (z_0-3\eta)^2 \ballE^\star(s)$,
for $x':=(y_0(x),x_3) \in \ball(x,R'')$.
Combining \eqref{eq:bd2} at $z=z_0-2\eta$ with a union bound
over the at most $n^2 \log n$ points of $\SA$, we conclude
that Definition~\ref{dfn:Gz}(c) also 
holds with probability going to one as $n \to \infty$.
With $\kappa$ independent of $n$,
this establishes \eqref{eq:js-yv-lhs} for any $s'>s>1$ all 
$\eta>0$ small enough and every possible type $\underline{z}$.

Turning to deal with \eqref{eq:js-yv}, we may and shall fix 
$\epsilon>0$, $s,s'$ as in \eqref{eq:sp-s-val} and two
admissible types $\underline{z}$, $\underline{z'}$, where 
as mentioned before $z_0 \ge 5 \eta$ and $z'_0 \ge 5 \eta$. Next, 
for $0 \le k \le L$, let $J_k := |\Gamma_{\underline{z}} (k) \bigcap \Gamma_{\underline{z}'}'(k)|$, where  
$$
\Gamma_{\underline{z}}(k) := \{ y_{k} \in \SA_{\twoD,k}
\mbox { for some } \underline{y} \mbox { of } \underline{z}-\mbox{type}\} 
$$
and $\Gamma'_{\underline{z}'}(k)$
denoting the same sets for an independent \abbr{srw} $X'$ on $\ttorus$.
Recall \eqref{dfn:unvis-z-ex} that the image of
$\CU_{s,\underline{z}} \cap \CU'_{s,\underline{z}'}$
via $x \mapsto y_0(x)$ is a subset of the at most $J_0$
points from $\SA_{\twoD,0}$ having the corresponding types, where
to each $y \in \SA_{\twoD,0}$ correspond 
\begin{equation}\label{def:m}
|\{x \in \SA : y_0(x)=y\}| \le h^3 := m
\end{equation}
points from $\SA$. Given the
position of their starting and ending points, the $r$-excursions 
of \abbr{srw} $X$ around each $x \in \SA$, are
mutually independent and further independent of the random subset 
$\Gamma_{\underline{z}}(0) \subseteq \SA_{\twoD,0}$. Likewise, 
given their starting/ending points, the $r$-excursions of the 
\abbr{srw} $X'$ around each $x \in \SA$ are mutually independent 
and independent of $\Gamma'_{\underline{z}'}(0)$. Further, for
$x \in \SA$ with $y(x) \in \Gamma_{\underline{z}} (0) \bigcap \Gamma_{\underline{z}'}'(0)$ to be in 
$\CU_{s,\underline{z}} \cap \CU'_{s,\underline{z}'}$ we must 
have $H_{x,z_0-3\eta}$ occurring for $X$ 
and $H_{x,z_0'-3\eta}$ occurring for $X'$ (see \eqref{dfn:unvis-z-ex}). 
By \eqref{eq:bd3}, the probability of 
both events independently occurring at a given $x$, is at most
\begin{equation}\label{def:barp}
\bar{p} := n^{-\alpha(z_0-4\eta) - \alpha(z_0'-4\eta)} \,.
\end{equation}
By the uniformity of~\eqref{eq:bd3} per conditioning 
as in Remark~\ref{rmk-bd3}, we thus deduce from the preceding discussion that
\begin{equation}\label{def:S}
|\CU_{s',\underline{z}} \cap \CU_{s',\underline{z}'}'| 
\quad \hbox{ is stochastically dominated by } \;\;
 \sum_{\ell=1}^{J_0} \xi_\ell \,, 
\end{equation}
where $\xi_\ell$ are i.i.d.\ Binomial$(m,\bar{p})$ variables
independent of $J_0$, and $m$, $\bar{p}$ 
are given by 
\eqref{def:m} and \eqref{def:barp}, respectively.
Recall that $\kappa$ in \eqref{eq:js-yv-rhs}
is independent of $n$, while $\bar{p} \, m \to 0$ and $h^4/m \to \infty$ as
$n \to \infty$. Further, with
\begin{equation}\label{eq:bdum}
(1+u)^m \le 1 + e \, u\, m \qquad \hbox{ whenever} \qquad  u \, m \in [0,1] \,,
\end{equation}
we deduce that for all $n$ large enough, 
\begin{equation}\label{eq:binom-mgf}
\E [2^{\kappa \xi_\ell}] = [1 + (2^{\kappa}-1) \bar{p}]^m
\le 1 + h^4 \bar{p} \,.
\end{equation}
In view of \eqref{def:S} and \eqref{eq:binom-mgf}, 
\[
\E \big[2^{\kappa |\CU_{s,\underline{z}} \cap\,
\CU'_{s,\underline{z}'}|}\,
{\bf 1}_{\CG_{\underline{z}}}
{\bf 1}_{{\CG'}_{\underline{z}'}} \big] \le
\E \Big[ \prod_{\ell=1}^{J_0} 2^{\kappa \xi_\ell} \Big] \le 
\E \Big[ \big( 1+ h^4 \bar{p} \big)^{J_0} \Big] \,,
\]
with \eqref{eq:js-yv} holding as soon as
\begin{equation}\label{eq:bd5}
\E \Big[ \big( 1+ h^4 \bar{p} \big)^{J_0} \Big] \to 1\,.
\end{equation}
Turning to establish \eqref{eq:bd5}, note that
for any $k=0,\ldots,L-1$, given their starting
and ending points, the inner parts of the
$R_{k+1}$-excursions for different choices of
$y_{k+1} \in \SA_{\twoD,k+1}$ are
independent of each other, and of the random
subset $\Gamma_{\underline{z}}(k+1)$.
Thus, as in the preceding derivation,
the contributions
$\{\xi^\star_\ell, \ell=1,\ldots,J_{k+1}\}$
to $J_k$ that correspond to the possible
$y_{k+1} \in
\Gamma_{\underline{z}} (k+1) \bigcap \Gamma_{\underline{z}'}'(k+1)$, are
stochastically dominated by mutually
independent random variables $\{\xi_\ell\}$,
each having maximal size $m_k$ and mean
$m_k \bar{p}_k$, which are further
independent of $J_{k+1}$.
Here, $m_k:=n^{2(\rho_{k+1}-\rho_k)}=n^{2/L}$ bounds the
maximal number of points $y_k \in \SA_{\twoD,k}$
inside the $R_{k+1}$-cylinder
centered at some $y_{k+1} \in \SA_{\twoD,k+1}$. Further, if
$z_k < 1$ then  
$\ncylE_{y_k,k,k+1,w}(s) \le (z_k - \eta)^2 \cylE^\star(s)$
for $w = z_{k+1}-2\eta$ 
(compare Definitions \ref{dfn:ztype} and \ref{defn:NB}). 
Replacing $z_k<1$ by $z_k'<1$ and $w$ by $w'=z'_{k+1}-2\eta$, 
the same applies for the corresponding excursion counts induced by the 
\abbr{srw} $X'$. Considering 
the upper bound \eqref{eq:bd1} for $j=k+1$, $\widetilde{\eta}=\eta$ and
such values of $(w,z_k)$ and $(w',z'_k)$, recall Remark \ref{rmk-bd2d} that 
it holds uniformly over the relative position of $y_k$ in the 
$R''_j$-sized square centered at $y_j$ and with respect
to the start/end points of the $R_j$-excursions around $y_j$. 
Having here $\rho_j-\rho_k = 1/L$, we deduce  
by the independence of $X$ and $X'$ that for all $n$ large enough,
$$
\bar{p}_k := (n^{\eta - 2 s L(z_{k+1} -2 \eta - z_k)_+^2} \wedge 1) 
(n^{\eta - 2 s L (z_{k+1}'-2 \eta-z_k')^2_+} \wedge 1) \,.
$$
Each $\xi_\ell$ is no longer Binomial
(there are dependencies within each $R_{k+1}$-cylinder). Nevertheless, 
setting $u_{k+1} := e u_k \bar{p}_k m_k$ with $u_0:=h^4 \bar{p}$
we get inductively for $k=0,1,\ldots,L-1$, that if $u_k m_k \le 1$ then 
\begin{equation}\label{eq:iterate}
\E \Big[ \big( 1+ u_k \big)^{J_k} \Big]
\le \E \Big[ \prod_{\ell=1}^{J_{k+1}}  \E \big[ (1 + u_k)^{\xi_\ell}
\big] \Big]
\le \E \Big[ (1+ u_{k+1})^{J_{k+1}} \Big] 
\end{equation}
(utilizing stochastic domination, the mutual
independence of $\{J_{k+1}, \xi_\ell\}$ and finally
the inequality \eqref{eq:bdum} at $u_k$ and $\xi_\ell \le m_k$).

With both $\underline{z}$ and $\underline{z}'$ admissible, 
it follows by the definition of 
$\Psi_{L,\eta}(\phi)$ and $\gamma_{k, \eta}(\cdot)$ 
(c.f.~\eqref{eq-def-gamma}), that for any $s>\Psi_{L,\eta}(\phi)$, 
$$
u_k m_k = e^k u_0 m_0 \prod_{j=0}^{k-1} \bar{p}_j m_{j+1} \le \, 
e^k h^4 
n^{-\gamma_{k, \eta}(\underline{z})-\gamma_{k, \eta}(\underline{z}')} \,
\le e^k h^4 n^{-2 \eta} \to 0 
$$
when $n \to \infty$.
Hence, iterating \eqref{eq:iterate} over $0 \le k \le L-1$ yields that 
for $n \to \infty$,
$$
\E \Big[ \big( 1+ u_0 \big)^{J_0} \Big]
\le \E \Big[ \big( 1+ u_L \big)^{J_L} \Big]
\to 1 \,.
$$
Indeed, the latter convergence holds since $J_L \le |\SA_{\twoD,L}|$ 
is uniformly bounded (in $n$), whereas by the preceding, 
$u_L \to 0$ as $n \to \infty$.

\section{Proof of Lemma~\ref{lem:3d}: 3D-like tail probabilities}
\label{sec:3d}

\subsection{Evaluation of typical values}\label{subsec::typ-val}

Setting $R=M R'$, $R'=M R''$ and $R'' \geq h$ integer valued 
with both $M$ and $R''$ large enough,

we 
show that the typical excursion counts up to time $s \tcovp$ are given 
as in~\eqref{eq:typical-counts} by:
\begin{equation*}
\cylE^\star(s) := 2s \frac{(\log n)^2}{\log(R/R')} \quad\text{and}\quad \ballE^\star(s) := \frac{4s r'}{a} \log n.
\end{equation*}
To this end, we start with some basic results about the $2$D excursions.
In particular, \eqref{eqn::2d_excursion_concentration1} establishes 
\eqref{eq:bd4} and allows us to replace the random excursion counts 
$\ncylE_{\ul{y},L}(s)$ by their typical value $\cylE^\star(s)$, which 
by \eqref{eq::2d_excursion-means} and \eqref{eqn::2d_excursion_concentration2},
is also where 
the variables $\ncylE_{\ul{y},k}(s)$, $0 \leq k < L$, concentrate.
\begin{prop}
\label{prop::typical_2d_excursion_count}
Fix $\ul{y}=(y_0,\ldots,y_L)$ with $y_k \in \SA_{\twoD,k}$.  For $0 \le k \le L$, let $\ncylE_{\ul{y},k}(s)$ be the number of $R_k$-excursions for $\ul{y}$
completed during the first $\cylE^\star(s)$ of the
$R_L$-excursions for the corresponding $y_L \in \SA_{\twoD,L}$ with $\ncylE_{\ul{y},L}(s)$ denoting the number of latter $R_L$-excursions completed
by time $s \tcovp$.  Let $\cylE_{\ul{y},k}(s)$ denote
the expectation of
$\ncylE_{\ul{y},k}(s):=\ncylE_{y_k,k,L,1}$ in case $k<L$.  Then for each $\delta > 0$, there exists $C = C(\delta) > 0$ and $M(\delta)$ such that for all $M \geq M_0(\delta)$ there exists $n_0(\delta,M)$ such that for all $n \geq n_0(\delta,M)$ 
and $0 \le k \le L$, we have that

\begin{align}
(1-\delta) \cylE^\star(s) \leq \cylE_{\ul{y},k}(s)
&\leq (1+\delta) \cylE^\star(s) ,
\label{eq::2d_excursion-means}\\
\p\big[ |\ncylE_{\ul{y},L}(s) - \cylE_{\ul{y},L}(s)| \geq \delta \cylE_{\ul{y},L}(s) \big] &\leq \exp(-C s (\log n)^2) 
\label{eqn::2d_excursion_concentration1}\\
\p\big[ |\ncylE_{\ul{y},k}(s) - \cylE_{\ul{y},k}(s)| \geq \delta \cylE_{\ul{y},k}(s) \big] &\leq n^{-C s \delta^2} \label{eqn::2d_excursion_concentration2}
\end{align}
\end{prop}

\begin{proof}
Note that $\ncylE_{\ul{y},L}(s)$ counts the number of excursions between concentric 2D-disks of radii $R_L'$ and $R_L$ by the projected \abbr{srw} on $\Z_n^2$ during its first $\tfrac{4s}{\pi} n^2 (\log n)^2 (1+o(1))$ steps \cite{DPRZ-cover}.  
(As we explained earlier, the factor $2/3$ is due to the elimination of all vertical steps of the original \abbr{srw} on $\ttorus$.)
Our first assertion, namely~\eqref{eq::2d_excursion-means}
in the case $k=L$, thus follows from \cite[Lemma~3.2]{DPRZ-late}.  
That is, $\cylE_{\ul{y},L}(s)$ is up to leading order given by $\cylE^\star(s)$. Since $R_L/R'_L =M$ is independent of $n$, the bound 
\eqref{eqn::2d_excursion_concentration1} likewise follows from \cite[Lemma~3.2]{DPRZ-late}. Fixing $0 \leq k < L$ and considering \cite[Lemma~3.2]{DPRZ-late} for 
the $R_k$-excursions completed during the same number of 
steps by the projected \abbr{srw}, it further follows from 
\eqref{eqn::2d_excursion_concentration1}
that $\cylE_{\ul{y},k}(s) = \cylE_{\ul{y},L}(s) (1+o(1))$.   The same argument also gives~\eqref{eqn::2d_excursion_concentration2}.
\end{proof}

We proceed to establish the mean value 
of the relevant $3$D excursions. Hereafter, we let
$\sigma_\SW$ denote the first
exit time of the \abbr{srw} $\{X_k\}$
from a given $\SW \subseteq V_n$ using $\sigma^x_S$ for $\sigma_{\ball(x,S)}$
and the notation
$\ball'=\ball(x,r')$, $\ball=\ball(x,r)$,
$\cyl'=\cyl(x',R')$ and $\cyl=\cyl(x',R)$ for
balls of radii 
$r=M r' \le h$, $r'=M$ and cylinders, respectively, of any
centers $x,x' \in V_n$ with $|x-x'| \leq R''$.
\begin{prop}
\label{prop::k_excursions}
Suppose that $x,x' \in V_n$ with $|x-x'| \leq R''$. Then for each $\eta > 0$ there exists $M_0(\eta)$ such that for each $M \geq M_0(\eta)$ there exists $n_0(\eta,M)$ such that $n \geq n_0(\eta,M)$ implies that
\[
(1- \eta) z^2 \ballE^\star(s) \leq
 \E[\nballE_{x,z}^{x'}]
\leq (1+\eta) z^2 \ballE^\star(s)
\,.
\]
\end{prop}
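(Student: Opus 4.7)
The plan is a two-level excursion decomposition: partition the walk into $R_0$-cylindrical excursions around $y(x')$, then compute the mean number of $r$-excursions around $x$ per $R_0$-excursion via effective-resistance estimates that combine a three-dimensional regime at scales $\le h$ with a two-dimensional regime at scales $\gg h$. By the strong Markov property applied at successive crossings of $\partial \cyl(x', R_0)$, $\nballE_{x,z}^{x'}$ is a sum of $z^2 \cylE^\star(s)$ contributions (one per $R_0$-excursion), which, conditional on their boundary entry and exit points, are independent. The external part of each $R_0$-excursion stays at 2D distance at least $R_0' - R_0'' \gg r$ from $x$, so its chance of reaching $\ball(x, r)$ is negligible; hence $\E[\nballE_{x,z}^{x'}] = z^2 \cylE^\star(s)\, N_*\,(1 + o(1))$, with $N_*$ the expected number of $r$-excursions during the internal part of a single $R_0$-excursion (from $\partial \cyl(x', R_0')$ until exit from $\cyl(x', R_0)$).

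To compute $N_*$, let $q$ denote the probability that a walk started on $\partial \cyl(x', R_0')$ hits $\partial \ball(x, r)$ before $\partial \cyl(x', R_0)$, and $p$ the probability that a walk started on $\partial \ball(x, r)$ hits $\partial \ball(x, r')$ before $\partial \cyl(x', R_0)$. Each $r$-excursion begins with the walk at $\partial \ball(x, r)$ entering $\partial \ball(x, r')$ and returning to $\partial \ball(x, r)$, from where another $r$-excursion starts (probability $p$) or the walk escapes to $\partial \cyl(x', R_0)$ (probability $1-p$). Summing the resulting geometric series gives $N_* = qp/(1-p)\,(1 + o(1))$, and applying the standard series-resistance identity for the three nested boundaries yields
\[
p = \frac{R_{\mathrm{eff}}(\partial \ball(x, r), \partial \cyl(x', R_0))}{R_{\mathrm{eff}}(\partial \ball(x, r'), \partial \ball(x, r)) + R_{\mathrm{eff}}(\partial \ball(x, r), \partial \cyl(x', R_0))},
\]
with an analogous expression for $q$ in terms of $R_{\mathrm{eff}}(\partial \ball(x, r), \partial \cyl(x', R_0'))$ and $R_{\mathrm{eff}}(\partial \cyl(x', R_0'), \partial \cyl(x', R_0))$.

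Each of these effective resistances is then estimated by splitting at the cross-over scale $h$: its sub-$h$ portion is asymptotically the classical $\Z^3$-resistance between concentric balls of radii $r_1 < r_2 \le h$, of order $r_3(1/r_1 - 1/r_2)$, while its super-$h$ portion contributes the two-dimensional logarithmic term $\frac{1}{\pi h}\log(R_b/R_a)$ provided by Lemma~\ref{lem-green-function-2} and its proof. Substituting $r' = M$, $r = M^2$, $R_0' = Mh$, $R_0 = M^2 h$, and $h = [a\log n]$, a short calculation yields
\[
N_* \;=\; \frac{2 r' \log M}{a \log n}\,(1 + o_M(1)) \;=\; \frac{\ballE^\star(s)}{\cylE^\star(s)}\,(1 + o_M(1)),
\]
where $o_M(1) \to 0$ as $M \to \infty$ and an additional $o(1)$ vanishes as $n \to \infty$; choosing $M \ge M_0(\eta)$ and then $n \ge n_0(\eta, M)$ so that both errors fall below $\eta/2$ completes the proof. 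The main obstacle is obtaining matching upper and lower bounds on the effective resistances uniformly through the cross-over region near scale $h$, where neither the $\Z^3$-ball asymptotics nor the planar logarithmic asymptotics apply directly; this calls for an adaptation of the flow construction and Rayleigh/series decomposition from the proof of Lemma~\ref{lem-green-function-2} to the present concentric-ball geometry.
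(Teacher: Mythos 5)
Your high-level reduction matches the paper's: decompose $\nballE_{x,z}^{x'}$ into per-$R_0$-excursion contributions, use conditional independence given the $R_0$-excursion endpoints, discard the external parts (which never approach $\ball$), and reduce to showing that the expected number $N_*$ of $r$-excursions during the internal part of a single $R_0$-excursion is $F_{\ball,\cyl}(1+o(1)) = \frac{2r'}{h}\log(R/R')(1+o(1))$. From there the two proofs diverge, and the divergence is where the gap lies.

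The paper does \emph{not} compute $N_*$ via a single effective-resistance calculation across the cross-over. Instead it writes $\p_v[\tau_{\ball'}<\sigma_\cyl\mid X_{\sigma_\cyl}=w]=\E_v[\loc_{\sigma_\cyl}(\ball')\mid\cdot]/\E_v[\loc_{\sigma_\cyl}(\ball')\mid\cdot,\,\tau_{\ball'}<\sigma_\cyl]$ (Lemma~\ref{lem::probability_hit_center}). The numerator is handled by the conditioned Green's function estimate of Lemma~\ref{lem::green_kernel_conditioned_to_exit}, which is \emph{purely $2$D in character}: the factor $\frac{3}{\pi h}\log(R/|v-x|)$ arises by summing the planar Green's function over the $h$ vertical layers and applying Harnack comparison, never a resistance-across-$h$ estimate. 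The denominator is handled by Lemma~\ref{lem::time_in_ball}, which is \emph{purely $3$D}: the local time in $\ball'$ before exiting $\ball(x,h')$ is asymptotic to the total $\Z^3$ occupation time of $\ball'$ (giving $2(r')^2$). Finally, the correction from $\E_{v'}[Z_\star\mid\cdot]\to 1$ is Lemma~\ref{lem::k_excursions_after_hitting}(a) via stochastic domination by a Geometric, again supported by Harnack. In short, the paper avoids the very cross-over estimate you identify, by never needing a surface-to-surface resistance through scale $h$.

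Your route replaces this with a geometric-series count $N_*=qp/(1-p)$ and series-resistance formulas for $p$ and $q$, which requires (i) Harnack-type uniformity so the success probability is the same at every point of $\partial\ball$ from which the walk restarts (you never establish this, though it is the entire content of Lemmas~\ref{lem::harnack_inequality}--\ref{lem::get_rid_of_conditioning}); (ii) treating $\partial\ball(x,r)$ and $\partial\cyl(x',R_0')$ as approximate equipotentials in a geometry that is neither spherically nor cylindrically symmetric; and, most importantly, (iii) two-sided estimates on $R_{\mathrm{eff}}(\partial\ball(x,r'),\partial\cyl(x',R_0))$ and companions, all of which straddle the cross-over at scale $h$. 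You flag (iii) as ``the main obstacle'' and leave it open; this is not a finishing detail but precisely where all the work of the proposition lives. Moreover, Lemma~\ref{lem-green-function-2}, which you propose to adapt, is a \emph{point-to-point} resistance estimate whose flow construction (two radial flows to infinity plus planar spreading) does not transfer directly to surface-to-surface resistance with the $(1+O(M^{-1}))$-level precision needed here — matching lower bounds via Rayleigh monotonicity would also require a new nested-shell argument. As written, the proposal outlines a plausible but unexecuted alternative; the paper's route via the local-time ratio and the separate $2$D Green's function and $3$D occupation-time lemmas is what actually closes the cross-over gap.
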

\begin{proof}

Recall Definition \ref{defn:NB} that $\nballE_{x,z}^{x'}$ 
counts the \abbr{srw} excursions from $\partial \ball'$ to $\partial \ball$
during its first $z^2 \cylE^\star(s)$
excursions from $\partial \cyl'$ to $\partial \cyl$.
The latter $R$-excursions are conditionally independent given their 
starting and ending points. Hence, with $Z_\star$ counting the excursions 
that $X|_{[0,\sigma_\cyl]}$ makes from $\partial \ball'$ 
to $\partial \ball$, it suffices to show that 
$$
\E_v[Z_\star \giv X_{\sigma_\cyl}=w] = F_{\ball,\cyl} (1+o(1))
$$ 
(as $n \to \infty$ and $M \to \infty$),
uniformly in $v \in \partial \cyl'$ and $w \in \partial \cyl$, where
the nominal conversion factor from $R$-excursions to ball excursions is
\begin{equation}
\label{eqn::excursion_ratio}
F_{\ball,\cyl} := \frac{\ballE^\star(s)}{\cylE^\star(s)} = \frac{2r'}{h} \log(R/R').
\end{equation}
Indeed, we show in Lemma~\ref{lem::probability_hit_center} that 
\begin{equation}
\label{eq:hit-prob-center-rep}
\p_v [ \tau_{\ball'} < \sigma_\cyl \giv X_{\sigma_\cyl} = w] = F_{\ball,\cyl}
(1+o(1)),
\end{equation}
and from part (a) of Lemma~\ref{lem::k_excursions_after_hitting} we deduce 
that for $v' \in \partial \ball'$ 
\begin{equation}
\label{eqn::transient_excursion_bound-rep}
 \E_{v'} [ Z_\star  \giv X_{\sigma_\cyl} = w] \to 1 \quad\text{as}\quad n \to \infty \quad\text{then}\quad M \to \infty
\end{equation}
uniformly in $v'$ and $w$, which together complete the proof.
\end{proof}

Our next six lemmas culminate in Lemmas
\ref{lem::k_excursions_after_hitting} and
\ref{lem::probability_hit_center}, thereby completing 
the proof of Proposition~\ref{prop::k_excursions}. 
The first of these lemmas controls the fluctuations of 
positive harmonic functions in $\ttorus$. 
\begin{lem}
\label{lem::harnack_inequality}
Fixing $M \geq 2$ and $S=MS'$, we have that 
for all positive harmonic functions $f$ 
on the ball $\ball(0,S)$ in $\Z^3$,
\begin{equation}
\label{eqn::ball_harnack}
 \max_{u,u' \in \ball(0,S')} \frac{f(u)}{f(u')} = 1 + O(M^{-1}).
\end{equation}
Likewise, if $x \in V_n$, $S < n/2$, then for any $M \geq 2$ 
and every positive harmonic function $f$ 
on $\cyl(x,S)$ in $\ttorus$, we have that
\begin{equation}
\label{eqn::cylinder_harnack}
\max_{u,u' \in \ball(x,S')} \frac{f(u)}{f(u')} = 1+O(M^{-1}).
\end{equation}
\end{lem}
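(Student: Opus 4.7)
The plan is to reduce the Harnack ratio for an arbitrary positive harmonic $f$ to a uniform comparison of the exit (harmonic) measure, and then verify the latter via Green's function asymptotics. Let $D$ denote either $\ball(0,S) \subset \Z^3$ or $\cyl(x,S) \subset \ttorus$, with $\sigma = \sigma_D$ its first exit time. Since $f \geq 0$ is harmonic on $D$, the Poisson representation gives $f(u) = \E_u[f(X_\sigma)] = \sum_{v \in \partial D} h_v(u) f(v)$ where $h_v(u) := \p_u[X_\sigma = v]$, whence
\[
\frac{f(u)}{f(u')} \leq \max_{v \in \partial D} \frac{h_v(u)}{h_v(u')}.
\]
It therefore suffices to prove $h_v(u)/h_v(u') = 1 + O(M^{-1})$ uniformly in $v \in \partial D$ and $u, u' \in \ball(0,S')$ (respectively $\ball(x,S')$).

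For the $\Z^3$ ball I would invoke the last-exit decomposition
\[
h_v(u) = \sum_{w \sim v,\, w \in \ball(0,S)} G_{\ball(0,S)}(u, w)\, p(w, v),
\]
with $G_{\ball(0,S)}(u, w) = G_{\Z^3}(u, w) - \E_u[G_{\Z^3}(X_\sigma, w)]$, together with the classical asymptotic $G_{\Z^3}(u, w) = c_3/|u - w| + O(|u - w|^{-2})$ for \abbr{srw} in $\Z^3$. For $u \in \ball(0,S')$ and $w$ within one step of $\partial \ball(0,S)$ we have $|u - w| = |w|(1 + O(M^{-1}))$, so $G_{\Z^3}(u, w) = G_{\Z^3}(0, w)(1 + O(M^{-1}))$; the same rate is inherited by the harmonic corrector (since $|X_\sigma - w| \gtrsim S$ whenever $X_\sigma \in \partial \ball(0, S)$), hence by $G_{\ball(0,S)}(u, w)$ and in turn by $h_v(u)/h_v(0)$.

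The cylinder case in $\ttorus$ would be handled by running the analogous strategy on the covering space $\Z^2 \times \Z_h$, using the Green's function of the walk killed on exiting $\cyl(x,S)$. When $S' \lesssim h$ the cylinder is metrically a $3$D box at the relevant scale and the previous proof transfers. When $S' \gtrsim h$ the vertical coordinate mixes in time $O(h^2) \ll S^2$, so the killed Green's function satisfies the effectively $2$D asymptotic $G_{\cyl}(u, w) = \tfrac{c_2}{h}\log(S/|u^{(2)} - w^{(2)}|) + O(1/h)$, with $u^{(2)}$ the $2$D projection; the ratio $G_{\cyl}(u, w)/G_{\cyl}(u', w)$ is again $1 + O(M^{-1})$ for $u, u' \in \ball(x, S')$ and $w$ near $\partial \cyl$, giving the desired bound.

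The main obstacle is the cylinder case at intermediate scales $S' \sim h$, where neither the $3$D nor the effective $2$D regime applies cleanly. A unified workaround is a coupling argument: two copies of the \abbr{srw} started at $u, u' \in \ball(x, S')$ can be coupled via coordinate-wise mirror couplings so as to agree before exiting $\cyl(x, S)$ with probability $1 - O(M^{-1})$, after which they proceed together. Optional stopping applied to the bounded martingale $f(X_{n \wedge \sigma})$ then yields $|f(u) - f(u')| \leq O(M^{-1}) \max_{\partial D} f$, which combined with the standard multiplicative Harnack inequality (used to bound $\max_{\partial D} f$ by $O(1) \cdot f(u)$) delivers the claimed $1 + O(M^{-1})$ Harnack ratio.
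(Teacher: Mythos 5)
Your exit-measure reduction is fine, but both of your proposed ways to carry it out have genuine gaps, and neither matches the paper's very short proof (combine Lawler's interior Harnack inequality with his gradient estimate for discrete harmonic functions, then handle the cylinder by lifting to $\Z^3$). In the Green's function route, the claim that ``the same rate is inherited by the harmonic corrector (since $|X_\sigma - w| \gtrsim S$ whenever $X_\sigma \in \partial \ball(0,S)$)'' is false: when $w$ lies near the boundary, the exit point $X_\sigma$ can land within $O(1)$ of $w$. This is not a removable technicality. The quantity $G_{\ball(0,S)}(u,w)$ for boundary $w$ is a near-cancelling difference of two terms each of order $c_3/S$, while the difference itself has order $1/S^2$; a crude $O(S'/S^2)$ bound on the variation of $G_{\Z^3}(\cdot,w)$ over $\ball(0,S')$ is therefore a factor $S$ too coarse to conclude anything about the \emph{ratio}. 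The cancellation you need is precisely a Harnack oscillation bound for the positive harmonic function $u \mapsto \E_u[G_{\Z^3}(X_\sigma, w)]$, so the argument as written is circular.

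The coupling fallback also contains an error: after coalescing before exiting $D$ with failure probability $O(M^{-1})$, you bound $|f(u)-f(u')|$ by $O(M^{-1}) \max_{\partial D} f$ and then invoke ``the standard multiplicative Harnack inequality'' to control $\max_{\partial D} f$ by $O(1) f(u)$. Interior Harnack controls $f$ only on compact subsets strictly inside $D$, not on $\partial D$; a positive harmonic function (for instance $h_v$ itself) can blow up near the boundary relative to its values deep inside. The fix is to stop both walks at $\sigma_{\ball(0,S/2)}$, so the residual is bounded by $\max_{\ball(0,S/2)} f$, which interior Harnack does bound by $O(1) f(u')$ --- at which point you have essentially reconstructed the paper's two-step argument (Lawler's Theorems~1.7.1 and~1.7.2). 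Finally, your three-regime analysis for the cylinder is unnecessary: any positive function harmonic on $\cyl(x,S)\subseteq\ttorus$ lifts, periodically in the vertical coordinate, to a positive harmonic function on the infinite cylinder of radius $S$ in $\Z^3$, which contains $\ball(x,S)$; the ball estimate then applies directly and uniformly, irrespective of how $S'$ compares to $h$.
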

\begin{proof}
We first prove~\eqref{eqn::ball_harnack}.  The Harnack inequality \cite[Theorem~1.7.2]{lawler} implies that there exists a constant $C_0 > 0$ such that
\begin{equation}
\label{eqn::lawler_harnack}
 \max_{u,u' \in \ball(0,S/2)} \frac{f(u)}{f(u')} \leq C_0.
\end{equation}
It thus follows from \cite[Theorem~1.7.1]{lawler} that there exists a constant $C_1 > 0$ such that for any $u,u' \in \ball(0,S')$ we have
\begin{equation}
\label{eqn::lawler_difference}
 |f(u) - f(u')| \leq S' \frac{C_1}{S} \max_{v \in \ball(0,S/2)} f(v).
\end{equation}
Combining~\eqref{eqn::lawler_harnack} with~\eqref{eqn::lawler_difference} gives~\eqref{eqn::ball_harnack}.  Observe that~\eqref{eqn::cylinder_harnack} follows from~\eqref{eqn::ball_harnack} because any function which is harmonic on $\cyl(x,S)$ may be lifted to a harmonic function on a cylinder in $\Z^3$ with radius $S$ and periodic boundary conditions.
\end{proof}

Building on the preceding lemma, we next show that starting 
inside $\ball(x,S')$ any non-negative variable measurable 
on $X|_{[0,\sigma_{\ball(x,S')}]}$ is almost
independent of the \abbr{srw} on $\ttorus$ exit location of $\SW$ containing 
$\ball(x,S)$.
\begin{lem}
\label{lem::get_rid_of_conditioning}
Let $S=MS'$, $M \ge 2$ and $\wt{\ball}=\ball(x,S')$ for $x \in V_n$ and $S' \le h$. 
Suppose that $Z \geq 0$ is a random variable which depends only on $X|_{[0,\sigma_{\wt{\ball}}]}$.  Fix $\SW \subseteq V_n$ which 
contains $\ball(x,S)$.  Then we have that
\[ \max_{w,w' \in \partial \SW} \max_{u \in \wt{\ball}} \frac{\E_u[ Z \giv X_{\sigma_\SW} = w]}{\E_u[ Z \giv X_{\sigma_\SW} = w']} = 1+ O(M^{-1}).\]
In particular,
\[ \max_{w \in \partial \SW} \max_{u \in \wt{\ball}} 
\frac{\E_u[ Z \giv X_{\sigma_\SW} = w]}{\E_u[ Z]} = 1+ O(M^{-1}).\]
\end{lem}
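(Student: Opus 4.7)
The plan is to apply the strong Markov property at the exit time $\sigma_{\wt{\ball}}$ of $\wt{\ball}$, thereby reducing the conditional expectation to a Harnack estimate on the harmonic ``exit-location'' function
$$
h_w(v) := \p_v[X_{\sigma_\SW} = w], \qquad v \in \SW.
$$
Since $\ball(x, S) \subseteq \SW$ and $h_w$ is harmonic off $\partial \SW$, the function $h_w$ is positive and harmonic on the smaller ball $\ball(x, S) \supseteq \wt{\ball}$. Invoking Lemma~\ref{lem::harnack_inequality} for this harmonic function (after lifting, see below) should then yield the oscillation estimate
$$
\max_{v, v' \in \ball(x, S' + 1)} \frac{h_w(v)}{h_w(v')} = 1 + O(M^{-1}),
$$
where the one-step slack beyond $\wt{\ball} = \ball(x, S')$ accommodates the fact that $X_{\sigma_{\wt{\ball}}}$ lies on the outer boundary of $\wt{\ball}$, and is harmless since $M \ge 2$ leaves ample Harnack room ($S = M S'$).

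Next, because $Z$ is measurable with respect to $X|_{[0, \sigma_{\wt{\ball}}]}$, the strong Markov property at $\sigma_{\wt{\ball}}$ gives
$$
\E_u[Z \, ; \, X_{\sigma_\SW} = w] \;=\; \E_u\!\left[\,Z \cdot h_w\!\left(X_{\sigma_{\wt{\ball}}}\right)\right] \;=\; h_w(u) \cdot \bigl(1 + O(M^{-1})\bigr) \cdot \E_u[Z],
$$
where the second equality uses the oscillation bound to replace $h_w(X_{\sigma_{\wt{\ball}}})$ by $h_w(u)$ up to a $1 + O(M^{-1})$ factor (both $u$ and $X_{\sigma_{\wt{\ball}}}$ sit in $\ball(x, S'+1)$). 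Dividing by $\p_u[X_{\sigma_\SW} = w] = h_w(u)$ immediately yields the ``in particular'' assertion
$$
\E_u[Z \giv X_{\sigma_\SW} = w] \;=\; (1 + O(M^{-1})) \, \E_u[Z],
$$
and the first displayed estimate of the lemma then drops out by simply forming the ratio of the analogous estimates at $w$ and $w'$ (the factor $\E_u[Z]$ cancels).

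The one technical wrinkle in executing the above is the invocation of Lemma~\ref{lem::harnack_inequality}: as stated, it applies to functions harmonic on the \emph{whole} cylinder $\cyl(x,S)$ in $\ttorus$ (or on $\ball(0, S) \subset \Z^3$ via~\eqref{eqn::ball_harnack}), whereas $h_w$ is only harmonic on $\ball(x, S) \subseteq \SW$. I would resolve this by lifting $h_w$ through the covering map $\Z^3 \to \ttorus$ to a positive harmonic function on the Euclidean ball $\ball(0, S)$ and applying~\eqref{eqn::ball_harnack} directly; this lift is well defined for all $n$ large enough, since the standing assumption $S' \le h$, together with $M$ fixed and $h = [a \log n] \to \infty$, ensures that $\ball(x, S)$ embeds faithfully into $\ttorus$ without wraparound in either the horizontal or vertical directions.
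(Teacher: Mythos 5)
Your argument is essentially the paper's proof in slightly different packaging: the paper expands $\E_u[Z \giv X_{\sigma_\SW}=w]$ over the value of $X_{\sigma_{\wt{\ball}}}$, applies Bayes' rule and the strong Markov property to isolate the factor $\p_v[X_{\sigma_\SW}=w]/\p_u[X_{\sigma_\SW}=w]$, and bounds its oscillation by the Harnack-type Lemma~\ref{lem::harnack_inequality} --- exactly what you do with $h_w$. One inaccuracy in your closing paragraph: since $S=MS'$ with $S'\le h$ and $M$ large, the ball $\ball(x,S)$ in $\ttorus$ typically \emph{does} wrap around vertically (it need not embed in $\Z^3$), so your stated justification for the lift is false; however this does not matter, because $h_w\circ\pi$ for the covering map $\pi:\Z^3\to\ttorus$ is in any case a well-defined positive harmonic function on $\ball_{\Z^3}(\hat x,S)$ whenever $\ball(x,S)\subseteq\SW$, and $\pi$ maps $\ball_{\Z^3}(\hat x,S')$ onto $\ball(x,S')$, so~\eqref{eqn::ball_harnack} applies and the conclusion stands.
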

\begin{proof}
Fix $u \in \wt{\ball}$ and $w \in \partial \SW$.  Then we have that
\begin{align}
       \E_u[ Z \giv X_{\sigma_\SW} = w]
&= \sum_{v \in \partial \wt{\ball}} \E_u[ Z \giv X_{\sigma_{\wt{\ball}}} = v] \p_u[ X_{\sigma_{\wt{\ball}}} = v \giv X_{\sigma_\SW} = w]. \label{eqn::conditioning_decomposition}
\end{align}
By Bayes' rule, we can write
\begin{equation}
\label{eqn::get_rid_of_conditioning_bayes}
 \p_u[ X_{\sigma_{\wt{\ball}}} = v \giv X_{\sigma_\SW} = w] = \frac{\p_u[ X_{\sigma_\SW} =w \giv X_{\sigma_{\wt{\ball}}} = v]}{\p_u[X_{\sigma_\SW} =w]} \p_u[X_{\sigma_{\wt{\ball}}} = v].
\end{equation}
By the strong Markov property, the ratio on the \abbr{rhs} of~\eqref{eqn::get_rid_of_conditioning_bayes} is contained in $[\kappa^{-1},\kappa]$ where
\begin{equation}
\label{eqn::get_rid_of_conditioning_ratio}
\kappa := \max_{v,v' \in \partial \wt{\ball}}\frac{\p_v[X_{\sigma_\SW} = w]}{\p_{v'}[ X_{\sigma_\SW} = w]}.
\end{equation}
Since $v \mapsto \p_v[X_{\sigma_\SW} = w]$ is harmonic on
$\ball(x,S)$, by Lemma~\ref{lem::harnack_inequality} 
we know that $\kappa = 1+O(M^{-1})$ uniformly in $w$.  Combining this with~\eqref{eqn::conditioning_decomposition} and using that $Z \geq 0$ implies the 
stated result.
\end{proof}

Using the preceding lemma, we
establish \eqref{eqn::transient_excursion_bound-rep} 
and further show that if $X_0$ is far from $x$, 
then $X|_{[0,\sigma_{\cyl}]}$ spends a negligible time in $\ball'$. 
To this end, we use hereafter 
\begin{equation}\label{dfn:loc}
\loc_{s,t}(\SW) := \sum_{k=s}^{t-1} \one_{\{X_k \in \SW\}} \,,
\end{equation}
for the \abbr{srw} local time of $\SW$ between
times $s \le t$, with $\loc_{t}(\SW) := \loc_{0,t}(\SW)$.
\begin{lem}
\label{lem::k_excursions_after_hitting}
Suppose that $x,x' \in V_n$ with $|x-x'| \leq R''$ and $w \in \partial \cyl$.
\begin{enumerate}[(a)]
\item There exists a universal finite constant $c_1$ such that 
starting at any $v' \in \partial \ball'$
the law of $Z_\star$ conditional on $\{X_{\sigma_\cyl} = w\}$ is stochastically dominated by $1+Y$ where $Y$ is a Geometric$(c_1/M)$ variable.

\item For $h'=h/(2M)$, uniformly in $v \in \partial \ball(x,h')$ and $w$,
\begin{equation}
\label{eqn::transient_local_time_bound}
\E_v[ \loc_{\sigma_\cyl}(\ball') \giv X_{\sigma_\cyl} = w] \to 0 \quad\text{as}\quad n \to \infty \quad\text{then}\quad M \to \infty .
\end{equation}
\end{enumerate}
\end{lem}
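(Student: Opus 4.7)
The plan is to iterate a one-step bound at endpoints $\{U_k\}\subset\partial\ball$ of successive $r$-excursions. Let $f(y):=\p_y[X_{\sigma_\cyl}=w]$, which is positive harmonic on $\cyl\setminus\{w\}$. By the strong Markov property,
\[
\p[Z_\star\ge k+1,\, X_{\sigma_\cyl}=w] = \E\bigl[\one_{Z_\star\ge k}\,\p_{U_k}[\tau_{\ball'}<\sigma_\cyl,\, X_{\sigma_\cyl}=w]\bigr],
\]
while the decomposition $\p_u[\tau_{\ball'}<\sigma_\cyl,\, X_{\sigma_\cyl}=w] = \sum_{v^*\in\partial\ball'}\p_u[X_{\tau_{\ball'}}=v^*,\tau_{\ball'}<\sigma_\cyl]f(v^*)$, combined with Lemma~\ref{lem::harnack_inequality} applied to $f$ on the $3$D ball $\ball(x,M^3)\subset\cyl$ (which fits inside $\ttorus$ for $n$ large, since $M^3\le h$; and recall $\partial\ball\cup\partial\ball'\subset\ball(x,M^2)$ lies in the Harnack inner ball), yields $\p_u[\tau_{\ball'}<\sigma_\cyl,\, X_{\sigma_\cyl}=w]=(1+O(M^{-1}))f(u)\p_u[\tau_{\ball'}<\sigma_\cyl]$. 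Putting these together,
\[
\p[Z_\star\ge k+1,\, X_{\sigma_\cyl}=w]\le(1+O(M^{-1}))\max_{u\in\partial\ball}\p_u[\tau_{\ball'}<\sigma_\cyl]\cdot\p[Z_\star\ge k,\, X_{\sigma_\cyl}=w].
\]

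\textbf{Unconditional one-step bound.} The key estimate $\p_u[\tau_{\ball'}<\sigma_\cyl]=O(1/M)$ for $u\in\partial\ball$ follows from the strong Markov identity at $\tau_{\ball'}$,
\[
\p_u[\tau_{\ball'}<\sigma_\cyl] \le \frac{\E_u[\loc_{\sigma_\cyl}(\ball')]}{\min_{v^*\in\partial\ball'}\E_{v^*}[\loc_{\sigma_\cyl}(\ball')]},
\]
together with short-range estimates for $G^\cyl$ at scales $\ll h$: for $u\in\partial\ball$, $\E_u[\loc_{\sigma_\cyl}(\ball')] = \sum_{u'\in\ball'}G^\cyl(u,u')=O(|\ball'|/r)=O(M)$, whereas for $v^*\in\partial\ball'$, $\E_{v^*}[\loc_{\sigma_\cyl}(\ball')]\ge c\sum_{u'\in\ball'}|v^*-u'|^{-1}=\Omega(r'^2)=\Omega(M^2)$. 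Iterating the preceding inequality over $k$ (and noting $\p[Z_\star\ge 1\giv X_{\sigma_\cyl}=w]=1$) yields $\p[Z_\star\ge k+1\giv X_{\sigma_\cyl}=w]\le(C/M)^k$, which is dominated by $(1-c_1/M)^k=\p[\mathrm{Geom}(c_1/M)\ge k]$ for $M$ large and any fixed $c_1\in(0,1)$, establishing the claimed stochastic domination of $Z_\star-1$.

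\textbf{Part (b) and the main obstacle.} For (b), the plan is the direct computation
\[
\E_v[\loc_{\sigma_\cyl}(\ball')\giv X_{\sigma_\cyl}=w] = \sum_{u\in\ball'}G^\cyl(v,u)\,\frac{f(u)}{f(v)},
\]
obtained via the Doob $f$-transform identity $G^f(v,u)=G^\cyl(v,u)f(u)/f(v)$ applied to $\loc_{\sigma_\cyl}(\ball')=\sum_{u\in\ball'}\loc_{\sigma_\cyl}(\{u\})$. Applying Lemma~\ref{lem::harnack_inequality} to $f$ on $\ball(x,h/2)\subset\cyl$ (Harnack inner radius $h'=h/(2M)$, containing both $v\in\partial\ball(x,h')$ and $\ball'$ since $r'=M\le h'$ for $n$ large) gives $f(u)/f(v)=1+O(M^{-1})$; and using $G^\cyl(v,u)\le C|v-u|^{-1}\le 2CM/h$ (as $|v-u|\ge h'-r'\sim h/(2M)$) yields $\E_v[\loc_{\sigma_\cyl}(\ball')\giv X_{\sigma_\cyl}=w]=O(|\ball'|M/h)=O(M^4/h)\to 0$ as $n\to\infty$ (with $h=a\log n$). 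The delicate input in both parts is the short-range estimate $G^\cyl(y,z)\asymp |y-z|^{-1}$ for $|y-z|\ll h$ with the correct $3$D constant; this should be obtained by comparing with $G^{\ball(x,h/2)}(y,z) = G_{\Z^3}(y,z)-\E_y[G_{\Z^3}(X_{\sigma_{\ball(x,h/2)}},z)]$, the correction being of order $O(1/h)$ (since $|X_{\sigma_{\ball(x,h/2)}}-z|\asymp h$), and hence negligible at the relevant scales.
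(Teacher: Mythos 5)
Your proof is correct, but it takes a genuinely different route from the paper. For part (a), the paper first uses Lemma~\ref{lem::get_rid_of_conditioning} to remove the conditioning, and then gives a three-step geometric decomposition: from $u' \in \partial\ball$, the walk avoids $\ball'$ and exits $\ball(x,h/4)$ into a good annular zone $\SA_M$ with conditional probability $1-O(M^{-1})$; from $\SA_M$, the probability of reaching $\ball'$ before $\sigma_\cyl$ is $O(1/\log\log n)$; combining gives the $O(M^{-1})$ step probability. You instead remove the conditioning inline by applying Harnack directly to $f = \p_\cdot[X_{\sigma_\cyl}=w]$ on $\ball(x,M^3)$, and then prove the key unconditional estimate $\p_u[\tau_{\ball'}<\sigma_\cyl]=O(1/M)$ via the local-time ratio $\E_u[\loc_{\sigma_\cyl}(\ball')]/\min_{v^*}\E_{v^*}[\loc_{\sigma_\cyl}(\ball')]=O(M)/\Omega(M^2)$. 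That is a clean, more quantitative route (and is consistent with the asymptotics $\sim 2(r')^2$ proved separately in Lemma~\ref{lem::time_in_ball}). For part (b), the paper reuses the excursion-domination idea of (a) at the larger scale $\ball(x,h')$ and then applies Wald's identity; you compute the conditional local time directly via the Doob $f$-transform $G^f(v,u)=G^\cyl(v,u)f(u)/f(v)$, Harnack for $f(u)/f(v)$, and the short-range bound on $G^\cyl$, which gets to the $O(M^4/h)\to 0$ conclusion more directly. One small imprecision worth fixing: you assert that the correction in $G^\cyl(y,z)$ relative to $G_{\Z^3}(y,z)$ is $O(1/h)$ by comparing with $G^{\ball(x,h/2)}$; that handles the lower bound, but for the upper bound on $G^\cyl$ one also needs the contribution $\E_y[G^\cyl(X_{\sigma_{\ball(x,h/2)}},z)]$ from returns after exiting $\ball(x,h/2)$, which (e.g.\ by the estimate of Lemma~\ref{lem::green_kernel_conditioned_to_exit} at scale $\sim h$) is of order $O(\log M/h)$, not $O(1/h)$. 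Since $\log M/h\ll 1/|y-z|$ in both regimes you use ($|y-z|\asymp M^2$ and $|y-z|\asymp h/M$), the conclusion $G^\cyl(y,z)\asymp |y-z|^{-1}$ still holds for $n$ large, but the claimed $O(1/h)$ is only the lower-bound error.
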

\begin{proof}
(a)  We first show that for some $C_1$ finite, 
any $u' \in \partial \ball$ and all $n, M$,
\begin{equation}
\label{eqn::exit_good_place}
\p_{u'}[ X_{\sigma_{h/4}^x} \notin 
\ball(x,\tfrac{h}{4}) \setminus \cyl(x,\tfrac{h'}{M})
\giv X_{\sigma_\cyl} = w] \leq \frac{C_1}{M}.
\end{equation}
Indeed, applying Lemma~\ref{lem::get_rid_of_conditioning} 
for $S'=h/4 \ge r$ and $\SW= \cyl \supset \ball(x,M S')$, we get 
\eqref{eqn::exit_good_place} upon noting that 
due to \cite[Lemma~1.7.4]{lawler},
\[
\p_{u'}\big[X_{\sigma_{h/4}^x} \notin
\ball(x,\tfrac{h}{4}) \setminus \cyl(x,\tfrac{h'}{M})
\big] \leq \frac{C_1}{M} \,.
\]
Similarly, upon applying Lemma~\ref{lem::get_rid_of_conditioning} 
for $Z={\bf 1}_{\{\tau_{\ball'} < \sigma_{h/4}^x\}}$, we 
can deduce from \cite[Theorem~1.5.4]{lawler} that 
for some universal $C_2$ finite
\begin{equation}
\label{eqn::hit_h_ball}
\p_{u'}[ \tau_{\ball'} < \sigma_{h/4}^x \giv X_{\sigma_\cyl} = w] \leq \frac{C_2}{M}.
\end{equation}
We next claim that for some $C_3(M)<\infty$ and all $u \in 
\ball(x,\tfrac{h}{4}) \setminus \cyl(x,\tfrac{h'}{M})$,
\begin{equation}
\label{eqn::return_from_good_place}
\p_u[ \tau_{\ball'} < \sigma_\cyl \giv X_{\sigma_\cyl} = w] \leq \frac{C_3(M)}{\log \log n}.
\end{equation}
Indeed, by Bayes' rule we can rewrite the \abbr{lhs} of~\eqref{eqn::return_from_good_place} as
\begin{equation}
\label{eqn::ratios}
\frac{\p_u[ X_{\sigma_\cyl} =w \giv \tau_{\ball'} < \sigma_\cyl]}{\p_u[ X_{\sigma_\cyl} = w]} \p_u[ \tau_{\ball'}  < \sigma_\cyl].
\end{equation}
By \cite[Exercise~1.6.8]{lawler}, the rightmost factor in~\eqref{eqn::ratios} is of order $C_3(M) / \log \log n$, so to complete the proof of~\eqref{eqn::return_from_good_place} it suffices to show that the left ratio in~\eqref{eqn::ratios} is uniformly bounded.  Applying the strong Markov property for the first time that $X$ hits $\partial \ball(x,\tfrac{h}{4})$ after $\tau_{\ball'}$, it in turn suffices to show that
\[ \max_{u,\wt{u} \in \partial \ball(x,\tfrac{h}{4})} \frac{\p_{\wt{u}}[ X_{\sigma_\cyl} =w]}{\p_{u}[ X_{\sigma_\cyl} = w]}\]
is bounded.  Such boundedness follows from \cite[Theorem~1.7.2]{lawler} since $u \mapsto \p_u[X_{\sigma_\cyl} = w]$ is harmonic.  Combining~\eqref{eqn::exit_good_place},~\eqref{eqn::hit_h_ball}, and~\eqref{eqn::return_from_good_place} yields the claimed stochastic domination of
the law of $Z$. 
\newline
(b) The same argument as in the proof of part (a) shows that here 
the number of excursions between $\ball'$ and $\ball(x,h')$ during the
time interval $[0,\sigma_\cyl]$ is stochastically dominated 
by a Geometric$(c(M)/\log h')$ for some finite $c(M)$.
Further, within each excursion between $\ball'$ and $\ball(x,h')$ 
we are in the setting of \abbr{srw} on $\Z^3$. Hence,
by a similar argument, relying once more on 
Lemma \ref{lem::get_rid_of_conditioning} and the relevant results from
\cite[Chapter 1]{lawler}, the expected contribution to $\loc(\ball')$ 
during such an excursion, conditional on its start/end points, is
uniformly bounded by $c'(M)$. Due to the independence
of these excursions given their start/end points, we thus 
deduce \eqref{eqn::transient_local_time_bound} by an 
application of Wald's identity.
\end{proof}

Turning to the proof of \eqref{eq:hit-prob-center-rep}, 
our next lemma gives a precise estimate of the Green's function 
for the \abbr{srw} on $\ttorus$ killed upon exiting $\cyl$ 
(conditioned on its exit location).  We note that for large 
$n$ and $M$ the 
resulting Green's function exhibits 
both $2$D (the term $\log (R/|v-x|)$) and $3$D (the factor $1/h$) 
behaviors.
\begin{lem}
\label{lem::green_kernel_conditioned_to_exit}
Suppose $x,x' \in V_n$ with $|x-x'| \leq R''$.  Let $\Green^w(v,x)$ denote the Green's function
for $X$ stopped upon hitting $\partial \cyl$ 
conditioned on exiting $\cyl$ at a given $w \in \partial \cyl$.
Then, for any $v \in \partial \cyl'$ and $\beta < 2$
\[ \Green^w(v,x) = \frac{3+O(M^{-1})}{\pi h}\big( \log R - \log|v-x|  + o(|v-x|^{-\beta}) + O(R^{-1})\big).\]
\end{lem}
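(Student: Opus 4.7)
The plan is to reduce $\Green^w(v,x)$ to the unconditional Green's function $\Green(v,x)$ for $X$ killed on exiting $\cyl$ via a Bayes--Harnack argument, then to reduce this $3$D quantity to the planar Green's function of the projected walk by exploiting vertical mixing inside $\cyl$, and finally to invoke classical $\Z^2$ potential theory to extract the explicit constants.

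First I would write, via the strong Markov property applied at the last visit to $x$ before $\sigma_\cyl$,
\[
\Green^w(v,x) \;=\; \Green(v,x)\cdot \frac{h^x(w)}{h^v(w)}, \qquad h^u(w) := \p_u[X_{\sigma_\cyl}=w].
\]
The map $u \mapsto h^u(w)$ is positive and harmonic on $\cyl\setminus\partial\cyl$, and since $v\in\partial\cyl'$ together with $|x-x'|\le R''\ll R'$ places both $v$ and $x$ in $\ball(x',2R')$, which lies well inside $\cyl(x',R)=\cyl(x',M^2R')$, the Harnack estimate of Lemma~\ref{lem::harnack_inequality} yields $h^x(w)/h^v(w)=1+O(M^{-1})$ uniformly in $w\in\partial\cyl$. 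It therefore suffices to establish the stated asymptotics, up to an overall $(1+O(M^{-1}))$ factor, for the unconditional $\Green(v,x)$ itself.

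Next I would perform vertical averaging. Since $\sigma_\cyl=\wt\sigma_R$ depends only on the horizontal projection $\wt X_k := \pi(X_k)\in\Z_n^2$, where $\wt\sigma_R$ is the first exit of $\wt X$ from the $2$D disk $D_R \subset \Z_n^2$ of radius $R$ around $\pi(x')$, I would decompose each step of $X$ into horizontal/vertical/lazy categories (with probabilities $1/3$, $1/6$, $1/2$). Conditionally on the category sequence, $\wt X$ and $X^{(3)}$ evolve as \emph{independent} non-lazy walks on $\Z_n^2$ and $\Z_h$ respectively. Writing
\[
\Green(v,x) \;=\; \sum_{k<T_0}\p_v[X_k=x,\,k<\wt\sigma_R] \;+\; \sum_{k\ge T_0}\p_v[X_k=x,\,k<\wt\sigma_R],
\]
with $T_0$ chosen larger than the $\Z_h$-mixing time (of order $h^2$) but much smaller than $|v-x|^2\gtrsim M^2h^2$, the $3$D on-diagonal heat kernel bound $\p_v[X_k=x]\le Ck^{-3/2}\exp(-c|v-x|^2/k)$ together with the separation $|v-x|\ge R'/2\gg T_0^{1/2}$ makes the short-time sum superpolynomially small in $M$. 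For the long-time sum, the local CLT on $\Z_h$ gives $\p[X_k^{(3)}=x^{(3)}\mid \wt X]=(1+o(1))h^{-1}$ uniformly in $k\ge T_0$, since the number of vertical moves up to time $k$ is concentrated around $k/6\gg h^2$. Consequently
\[
\Green(v,x) \;=\; \frac{1+o(1)}{h}\,\wt\Green_R(\pi(v),\pi(x)) \;+\; \frac{1}{h}\,o(|v-x|^{-\beta}),
\]
where $\wt\Green_R$ is the Green's function of the $2$D projected walk killed on exiting $D_R$.

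Finally I would invoke $\Z^2$ potential theory. The walk $\wt X$ is a lazy $\Z_n^2$-SRW whose Green's function on $D_R$ is an explicit rescaling of the standard $\Z^2$ one. Using the potential kernel expansion $a(y)=\tfrac{2}{\pi}\log|y|+\kappa+o(|y|^{-\beta})$ valid for every $\beta<2$ (see \cite[Theorem~4.4.4]{LL10}) together with the identity $\wt\Green_R(y,0)=\E_y[a(\wt X_{\wt\sigma_R})]-a(y)$ and the near-uniform exit distribution on $\partial D_R$, which gives $\E_y[a(\wt X_{\wt\sigma_R})]=\tfrac{2}{\pi}\log R+\kappa+O(R^{-1})$ (cf.\ \cite[Proposition~4.6.2]{LL10}), one obtains the desired expansion for $\wt\Green_R(\pi(v),\pi(x))$ of the form $\tfrac{3}{\pi}(\log R-\log|v-x|)+\tfrac{3}{\pi}(o(|v-x|^{-\beta})+O(R^{-1}))$; noting also that $\|\pi(v)-\pi(x)\|_{\Z_n^2}=|v-x|(1+O(R''/R'))$ absorbs into the same error terms. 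Substituting into the displayed estimate for $\Green(v,x)$ and reinserting the Harnack factor from Step~1 yields the claim. The main obstacle will be the vertical-mixing step: the short-time $3$D contribution and the CLT remainder at $k\ge T_0$ must both be dominated by the tight $\frac{1}{h}\,o(|v-x|^{-\beta})$ error for every $\beta<2$, which requires a delicate choice of $T_0$ and makes this the most technically demanding part of the argument.
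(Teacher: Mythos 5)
Your Step~1 (the Bayes--Harnack reduction of $\Green^w(v,x)$ to the unconditional $\Green(v,x)$) matches the paper's argument, and your Step~3 (invoking the $\Z^2$ potential kernel expansion and uniform exit estimate) is also what the paper does. The genuine difference is your Step~2. The paper replaces your vertical-mixing/local-CLT argument with a much shorter exact computation: it writes the $2$D Green's function of the projected walk as the fiber sum
\[
\Green_{\Z_n^2}(v,x) \;=\; \sum_{u \in \SW_x} \Green(v,u), \qquad \SW_x := \{\text{the } h \text{ vertical translates of } x\},
\]
and then ``un-sums'' this by noting that $u \mapsto \Green(v,u)$ is harmonic away from $v$, in particular on $\cyl(x',R')$, so the Harnack estimate of Lemma~\ref{lem::harnack_inequality} gives $\Green(v,u)/\Green(v,u') = 1 + O(M^{-1})$ uniformly over $u,u'\in \SW_x$; hence $\Green(v,x) = (1+O(M^{-1}))\Green_{\Z_n^2}(v,x)/h$ in one line, with no time-decomposition and no local CLT. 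This buys you a clean multiplicative $1+O(M^{-1})$ error automatically, whereas your approach must separately verify that the short-time ($k < T_0$) contribution and the $k \ge T_0$ CLT remainder are both $O(M^{-1})$ multiplicatively against the main term $\sim (\log M)/h$. That can in fact be made to work (a window $h^2 \ll T_0 \ll M^2 h^2$ exists once $M$ is large, and the resulting errors are $O(e^{-cM}/h)$, which is dominated by the $O(M^{-1}\log M /h)$ slack), but it is substantially more delicate than what the lemma requires.

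One concrete error: you take the step probabilities to be $1/3$ horizontal, $1/6$ vertical, $1/2$ lazy, which is the lazy kernel~\eqref{eqn::lazy_rw_definition}. In this section the paper's $X$ is the \emph{non-lazy} \abbr{srw} on $\ttorus$ (as the proof explicitly says: ``the projected random walk has a $1/3$ holding probability since this is the probability that the (unprojected) walk moves in the vertical direction''). With your lazy probabilities the projected $2$D walk has holding probability $1/2 + 1/6 = 2/3$, so its Green's function would be $\frac{1}{1-2/3}\cdot\frac{2}{\pi}(\log R - \log|v-x|) + \cdots = \frac{6}{\pi}(\cdots)$, not the $\frac{3}{\pi}(\cdots)$ you state at the end. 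The correct bookkeeping for the non-lazy walk is $2/3$ horizontal, $1/3$ vertical, holding probability $1/3$ for the projection, giving the factor $\frac{1}{1-1/3}\cdot\frac{2}{\pi} = \frac{3}{\pi}$. You should also be careful to phrase your opening decomposition without invoking the strong Markov property ``at the last visit to $x$'' (not a stopping time); the identity $\Green^w(v,x) = \Green(v,x)\,\p_x[X_{\sigma_\cyl}=w]/\p_v[X_{\sigma_\cyl}=w]$ follows from a simple time-by-time Markov decomposition, or from the paper's route through $\tau_x$ and $\tau_x^+$.
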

\begin{proof}
Let $\tau_x$ be the first time that $X$ hits $x$, and let $\tau_x^+$ be the 
time of its first return to $x$. 
By the strong Markov property of $X$ at time $\tau_x^+$, we have
$$
\p_x[X_{\sigma_\cyl} = w \giv \tau_x^+ \leq \sigma_\cyl]=\p_x[X_{\sigma_\cyl} = w] \, ,
$$
i.e., the events $\{X_{\sigma_\cyl} = w\}$ and  $\{ \tau_x^+ \leq \sigma_\cyl\}$ are independent. Thus 
$$
\p_x[ \tau_x^+  > \sigma_\cyl \giv X_{\sigma_\cyl} = w]= \p_x[ \tau_x^+ > \sigma_\cyl] ; 
$$ 
taking reciprocals, $\Green^w(x,x) = \Green(x,x)$, where $G$ is the (unconditioned) Green's function for $X$ stopped upon hitting $\partial \cyl$. 

Applying the strong Markov property of 
$X$ conditioned on $\{X_{\sigma_\cyl} = w\}$,
at the stopping time $\tau_x$, we have that
\[ \Green^w(v,x)  = \p_v[ \tau_x \leq \sigma_\cyl \giv X_{\sigma_\cyl} = w] \Green^w(x,x).\]
By Bayes' rule, 

\begin{align*}
 \p_v[\tau_x \leq \sigma_\cyl \giv X_{\sigma_\cyl} = w]
 &= \frac{\p_v[ X_{\sigma_\cyl} = w \giv \tau_x \leq \sigma_\cyl]}{\p_v[X_{\sigma_\cyl} = w]} \p_v[\tau_x \leq \sigma_\cyl]\\
 &= \frac{\p_x[X_{\sigma_\cyl} = w]}{\p_v[X_{\sigma_\cyl} = w]} \p_v[\tau_x \leq \sigma_\cyl].
\end{align*}
Since $\Green(v,x) = \p_v[\tau_x \leq \sigma_\cyl]\Green(x,x)$,  combining the above we see that
\begin{equation}
\label{eqn::green_decomposition}
 \Green^w(v,x) = \frac{\p_x[X_{\sigma_\cyl} = w]}{\p_v[X_{\sigma_\cyl} = w]} \Green(v,x).
\end{equation}
Since $u \mapsto \p_u[X_{\sigma_C} =w]$ is harmonic
within $\cyl(x',R)$ and $v,x \in \cyl'$,
applying Lemma~\ref{lem::harnack_inequality} we arrive at
\begin{equation}
\label{eqn::green_no_exit_formula}
 \Green^w(v,x)= (1+O(M^{-1})) \Green(v,x).
\end{equation}
It thus remains only to estimate $\Green(v,x)$. To this end, let $\Green_{\Z_n^2}$ denote the Green's function associated with the projected (unconditioned) random walk in $\Z_n^2$ stopped upon exiting the disk of radius $R$ centered at $y(x')$.
Note that the projected random walk has a $1/3$ holding probability since this is the probability that the (unprojected) walk moves in the vertical direction.  
Let $\SW_x$ denote the collection of $h$ points in 
$V_n$ whose $2$D projection is equal to $y(x)$.  Then
\begin{equation}
\label{eqn::green_2d_3d_sum}
\Green_{\Z_n^2}(v,x) = \sum_{u \in \SW_x} \Green(v,u).
\end{equation}
Since $u \mapsto \Green(v,u)$ (for $v$ fixed) is harmonic for $u \neq v$, 
hence in $\cyl(x',R')$, whereas $\SW_x \subset \ball(x',2R'')$,
Lemma~\ref{lem::harnack_inequality} implies that
\begin{equation}
\label{eqn::green_ratio_bound}
\frac{\Green(v,u)}{\Green(v,u')} = 1+O(M^{-1}) \quad\text{for all}\quad u,u' \in \SW_x.
\end{equation}
Moreover, \cite[Proposition~1.6.7]{lawler} gives us that for every $\beta < 2$ we have
\[ \Green_{\Z_n^2}(v,x) = \frac{3}{\pi} \left( \log R - \log|v-x| \right) + o(|v-x|^{-\beta}) + O(R^{-1})\]
(recall the $1/3$ laziness).  Combining this with~\eqref{eqn::green_2d_3d_sum} and~\eqref{eqn::green_ratio_bound} tells us that for every $\beta < 2$ we have
\[ \Green(v,x) = \frac{(1+O(M^{-1}))}{h} \left( \frac{3}{\pi} \left( \log R - \log|v-x| \right) + o(|v-x|^{-\beta}) + O(R^{-1}) \right).\]
Combining this with~\eqref{eqn::green_no_exit_formula} gives the result.
\end{proof}

We are now going to estimate the expected amount of time that \abbr{srw} starting from $\partial \ball'$ spends in $\ball'$ before exiting $\cyl$.  
This estimate allows us to establish \eqref{eq:hit-prob-center-rep}
in the subsequent lemma.

\begin{lem}
\label{lem::time_in_ball}
For $x,x' \in V_n$ with $|x-x'| \leq R''$ any
$v' \in \partial \ball'$ and $w \in \partial \cyl$, let
\begin{equation*}
\oloc^{v',w}(\ball' ; \cyl)  = \E_{v'}[ \loc_{\sigma_\cyl}(\ball') \giv X_{\sigma_\cyl} = w]
\end{equation*}
(for $\loc_t(\cdot)$ as in \eqref{dfn:loc}). Then,
\begin{equation}
\label{eqn::time_in_ball}
\frac{\oloc^{v',w}(\ball' ; \cyl)}{2 (r')^2} \to 1 \quad\text{as}\quad n \to \infty \quad\text{then}\quad r'= M \to \infty.
\end{equation}
\end{lem}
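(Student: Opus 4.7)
The plan is in three steps: strip the exit conditioning by a Harnack argument, approximate by a free lazy \abbr{srw} on $\Z^3$ (justified since $r' = M \ll h$), and evaluate the resulting $\Z^3$ Green's function sum via a classical Newtonian-potential computation.

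\textbf{Step 1 (strip conditioning).} Writing $\oloc^{v',w}(\ball';\cyl) = \sum_{u \in \ball'} G^w(v',u)$ for $G^w(v',u) := \E_{v'}[\loc_{\sigma_\cyl}(\{u\}) \giv X_{\sigma_\cyl}=w]$, the same Bayes / strong-Markov manipulation used in the proof of Lemma~\ref{lem::green_kernel_conditioned_to_exit} yields
\begin{equation*}
G^w(v',u) = \frac{\p_u[X_{\sigma_\cyl}=w]}{\p_{v'}[X_{\sigma_\cyl}=w]}\, G_\cyl(v',u),
\end{equation*}
with $G_\cyl(v',u)$ the unconditioned killed Green's function. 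Since $u \mapsto \p_u[X_{\sigma_\cyl}=w]$ is harmonic on $\cyl$ and both $u, v' \in \ball' \subset \cyl'$, Lemma~\ref{lem::harnack_inequality} renders this ratio $1+O(M^{-1})$ uniformly in $w$, so that $\oloc^{v',w}(\ball';\cyl) = (1+O(M^{-1}))\, \E_{v'}[\loc_{\sigma_\cyl}(\ball')]$.

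\textbf{Step 2 (coupling with $\Z^3$).} Set $\tau_* := \sigma_{h/2}^x$. Since $\mathrm{diam}(\ball(x,h/2)) < h$, we couple $X|_{[0,\tau_*]}$ pathwise with a lazy \abbr{srw} $Y$ on $\Z^3$ started at $v'$. Splitting the local time at $\tau_*$ via the strong Markov property,
\begin{equation*}
\E_{v'}[\loc_{\sigma_\cyl}(\ball')] = \E_{v'}^{\Z^3}[\loc_{\tau_*}(\ball')] + \E_{v'}\big[\one_{\tau_* < \sigma_\cyl}\, \E_{X_{\tau_*}}[\loc_{\sigma_\cyl}(\ball')]\big].
\end{equation*}
The second summand is bounded by $\sup_{v'' \in \partial \ball(x,h/2)} \E_{v''}[\loc_{\sigma_\cyl}(\ball')]$; by Step~1 applied at $v''$ it is comparable up to $(1+O(M^{-1}))$ to the conditional quantity handled in Lemma~\ref{lem::k_excursions_after_hitting}(b), which vanishes as $n \to \infty$ then $M \to \infty$ (the cited lemma provides this at $v'' \in \partial \ball(x,h')$ with $h'=h/(2M)$, and the identical argument — or monotonicity in starting distance — extends it to the larger scale $h/2$). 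By transience of lazy \abbr{srw} on $\Z^3$, the first summand increases to $\sum_{u \in \ball'} \widetilde G(v',u)$ as $h \to \infty$, where $\widetilde G$ denotes the free lazy $\Z^3$ Green's function.

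\textbf{Step 3 (Newtonian potential).} The standard $3$D asymptotic $\widetilde G(v',u) \sim c_3/|v'-u|$ (for an explicit constant $c_3$; cf.\ \cite[Theorem~4.3.5]{LL10}), combined with a Riemann-sum comparison (the finitely many near-diagonal terms contribute $O(r')$, absorbed in the error), gives
\begin{equation*}
\sum_{u \in \ball'} \widetilde G(v',u) = (1+o_{M\to\infty}(1))\, c_3 \int_{|u-x| \leq r'} \frac{du}{|v'-u|}.
\end{equation*}
A classical computation in spherical coordinates centered at $v'$ (equivalently, the Newtonian potential of a uniformly charged ball evaluated at a boundary point) gives $\int_{|u-x| \leq r'} |v'-u|^{-1}\, du = \tfrac{4\pi}{3}(r')^2$. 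Substituting the explicit value of $c_3$ then produces the stated main term $2(r')^2$, completing the proof of~\eqref{eqn::time_in_ball}.

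The main obstacle is Step~2: one must verify that the contribution from trajectories returning to $\ball'$ after reaching the $h/2$-scale is $o((r')^2)$ rather than merely $o(1)$. This follows since (a) the return probability from $\partial \ball(x, h/2)$ to $\ball'$ is $O(r'/h) = O(M/\log n)$ by $3$D transience, and (b) the conditional expected time in $\ball'$ given such a return, which is exactly the quantity treated in Lemma~\ref{lem::k_excursions_after_hitting}(b), is itself $o(1)$ in the double limit $n \to \infty$, $M \to \infty$.
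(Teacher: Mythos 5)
Your proof is correct and follows essentially the same route as the paper: strip the exit conditioning via the Green's function identity \eqref{eqn::green_decomposition} and Harnack, decompose at an intermediate scale to reduce to the transient $\Z^3$ walk, kill the far-scale contribution via Lemma~\ref{lem::k_excursions_after_hitting}(b), and evaluate the remaining Green's function sum by the Newtonian potential of a uniformly charged ball. The only organizational difference is that you strip conditioning globally first and then decompose (and use $h/2$ rather than the paper's $h'=h/(2M)$ as the cutoff scale, compensating with a one-line strong Markov monotonicity argument to invoke Lemma~\ref{lem::k_excursions_after_hitting}(b)); the paper decomposes first and applies Lemma~\ref{lem::get_rid_of_conditioning} only to the near-scale piece --- both variants are equally valid.
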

\begin{proof}
We first reduce~\eqref{eqn::time_in_ball} to a computation which involves 
only the transient \abbr{srw} $\wt{X}$ on $\Z^3$ starting at $\wt{X}_0=v'$.  To this end note that for $h'=h/(2M)$,
\[ 
\oloc^{v',w}(\ball' ; \cyl) = \E_{v'}[ \loc_{\sigma_{h'}^x}(\ball') \giv X_{\sigma_\cyl} = w] 
+ \E_{v'}[ \loc_{\sigma_{h'}^x,\sigma_\cyl}(\ball') \giv X_{\sigma_\cyl} = w],
\]
and from part (b) of Lemma~\ref{lem::k_excursions_after_hitting} 
the right most term is $o(1)$ as $n \to \infty$ followed by $M \to \infty$.
Further, the other term on the \abbrs{rhs} involves a  variable of the type considered in Lemma~\ref{lem::get_rid_of_conditioning} 
for $S'=h'$. With $\cyl \subset \ball(x,h/2)$ it is thus within a uniform
$1+O(M^{-1})$ factor of $\E_{v'}[ \loc_{\sigma_{h'}^x}(\ball')]$, which 
is precisely the local time in $\ball'$ of $\wt{X}$ till its exit time of
$\ball(x,h')$. Let $\wt{Z}$ be the \emph{total} 
local time of $\wt{X}$ in $\ball'$, noting that since $h'=\Theta(\log n)$ 
while $r'=M$, it follows from \cite[Theorem~1.5.4]{lawler} that as $n \to \infty$,
\begin{equation}
\label{eqn::time_in_ball_3d}
 \E_{v'}[ \loc_{\sigma_{h'}^x}(\ball')] = \E_{v'}[\wt{Z}] + O(1).
\end{equation}
From \cite[Theorem~1.5.4]{lawler}, we have moreover that 
\begin{equation}
\label{eqn::time_in_ball_int_rep}
\frac{1}{(r')^2}\E_{v'}[\wt{Z}] \to c_3 \int_{\ball(0,1)} \frac{du}{|u-e_3|} 
\quad\text{as}\quad r' \to \infty,
\end{equation}
where $c_3:=3/(2\pi)$ is given explicitly in
\cite[Theorem~4.3.1, top of page 82]{LL10},
$e_3=(0,0,1)$ and 
$\ball(0,1) = \{v \in \R^3 : |v| < 1\}$ is
the unit ball in $\R^3$ with Lebesgue measure denoted by $du$; 
we note that an additional factor of $r'$ appears in the
normalization from spatially re-scaling. 
This convergence is uniform in $v' = \wt{X}_0$
and the proof is completed by finding after the change of coordinates 
$u=(t \cos \varphi \cos \theta,t \cos \varphi \sin \theta, 1 - t \sin \varphi)$ 
that the integral on the \abbr{rhs} of \eqref{eqn::time_in_ball_int_rep} 
is precisely $4 \pi/3$.
\end{proof}

Combining Lemmas~\ref{lem::green_kernel_conditioned_to_exit} and \ref{lem::time_in_ball}
we now establish \eqref{eq:hit-prob-center-rep}.
\begin{lem}
\label{lem::probability_hit_center}
Uniformly in $x,x' \in V_n$ with $|x-x'| \leq R''$,

$v \in \partial \cyl'$ and $w \in \partial \cyl$,
in the limit $n \to \infty$ followed by $M \to \infty$, 
\begin{equation}\label{eq:hit-prob-center}
\p_v[ \tau_{\ball'} < \sigma_\cyl \giv X_{\sigma_\cyl} = w] =
\frac{2r'}{h} \log (R/R') (1+o(1)).
\end{equation}
\end{lem}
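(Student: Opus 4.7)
The plan is to run the standard Green's-function identity: write the expected local time in $\ball'$ as a sum of Green's function values (which I already know from Lemma~\ref{lem::green_kernel_conditioned_to_exit}), and as the hitting probability $\p_v[\tau_{\ball'}<\sigma_\cyl\giv X_{\sigma_\cyl}=w]$ times an ``entry factor'' coming from Lemma~\ref{lem::time_in_ball}, then solve for the hitting probability.

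For the first evaluation, I would write
$$\E_v\big[\loc_{\sigma_\cyl}(\ball') \giv X_{\sigma_\cyl}=w\big] \;=\; \sum_{u \in \ball'} \Green^w(v,u)$$
and plug in the estimate from Lemma~\ref{lem::green_kernel_conditioned_to_exit} with the target point taken to be $u \in \ball'$ (the hypothesis $|u - x'| \le R''$ is automatic since $\ball' \subset \ball(x', r' + R'')$ and $r', R'' \ll R'$). For any $v \in \partial \cyl'$ and $u \in \ball'$, one has $|v - u| = R'(1 + O(M^{-1}))$, so $\log R - \log|v-u| = \log(R/R') + O(M^{-1})$, while the $o(|v-u|^{-\beta}) + O(R^{-1})$ error is negligible relative to $\log(R/R') \sim \log M$. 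Summing uniformly over $u \in \ball'$, with $|\ball'| = \tfrac{4\pi}{3}(r')^3(1+o(1))$, yields
$$\E_v\big[\loc_{\sigma_\cyl}(\ball') \giv X_{\sigma_\cyl}=w\big] \;=\; \frac{4(r')^3}{h}\log(R/R')\,(1+o(1)).$$

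For the second evaluation, a first-entrance decomposition at $\tau_{\ball'}$ combined with the Bayes-rule trick already used in the proof of Lemma~\ref{lem::green_kernel_conditioned_to_exit} gives
$$\E_v\big[\loc_{\sigma_\cyl}(\ball') \giv X_{\sigma_\cyl}=w\big] \;=\; \sum_{v' \in \partial \ball'} \p_v[\tau_{\ball'}<\sigma_\cyl, X_{\tau_{\ball'}}=v']\,\frac{\p_{v'}[X_{\sigma_\cyl}=w]}{\p_v[X_{\sigma_\cyl}=w]}\,\oloc^{v',w}(\ball';\cyl).$$
Since $u \mapsto \p_u[X_{\sigma_\cyl}=w]$ is harmonic on $\cyl$ and $v, v'$ both lie well inside $\cyl(x',R/2)$, Lemma~\ref{lem::harnack_inequality} forces every ratio to equal $1 + O(M^{-1})$ uniformly in $w$. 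By Lemma~\ref{lem::time_in_ball}, $\oloc^{v',w}(\ball';\cyl) = 2(r')^2(1+o(1))$ uniformly in $v' \in \partial \ball'$ and $w \in \partial \cyl$. The same Bayes-plus-Harnack manipulation shows $\p_v[\tau_{\ball'}<\sigma_\cyl \giv X_{\sigma_\cyl}=w] = (1+O(M^{-1}))\,\p_v[\tau_{\ball'}<\sigma_\cyl]$. Equating the two evaluations and solving yields
$$\p_v[\tau_{\ball'}<\sigma_\cyl \giv X_{\sigma_\cyl}=w] \;=\; \frac{2r'}{h}\log(R/R')(1+o(1)),$$
as required for~\eqref{eq:hit-prob-center}.

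The main obstacle I expect is book-keeping of the uniformity of all the $o(1)$ and $O(M^{-1})$ errors, especially in the Green's function step: the logarithm $\log(R/R') = \log M$ is only a moderate quantity, so one must be careful that the $o(1)$ error in $\Green^w(v,u)$ (arising from the $o(|v-u|^{-\beta})$ and $O(R^{-1})$ corrections) really is much smaller than $\log M$, and that the $O(M^{-1})$ Harnack error survives multiplication by the summand count $|\ball'| \asymp M^3$; this is fine because the Harnack error is a multiplicative $1+O(M^{-1})$ on each Green's value rather than an additive one. The uniformity over the starting/ending points of the relevant excursions claimed in Remark~\ref{rmk-bd3} follows because every approximation used is Harnack-based over regions of diameter $\gg r', R''$.
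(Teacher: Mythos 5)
Your proof is correct and follows essentially the same route as the paper: both evaluate $\E_v[\loc_{\sigma_\cyl}(\ball')\giv X_{\sigma_\cyl}=w]$ via the Green's-function sum from Lemma~\ref{lem::green_kernel_conditioned_to_exit}, and both identify the ``per-visit'' factor $2(r')^2$ from Lemma~\ref{lem::time_in_ball} to extract the hitting probability. The only cosmetic difference is that the paper packages the second step via the identity $\p[Z>0]=\E[Z]/\E[Z\giv Z>0]$ applied to $Z=\loc_{\sigma_\cyl}(\ball')$ under the $w$-conditioned law, whereas you carry out the equivalent first-entrance decomposition at $\tau_{\ball'}$ with an explicit Bayes/Harnack bookkeeping step --- the same content, just written out.
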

\begin{proof} Recall that if $Z \ge 0$ and $\p[Z > 0] > 0$ 
then $\p[Z > 0] = \E[Z] / \E[Z\giv Z> 0]$.  
Applying this identity for $Z=\loc_{\sigma_\cyl}(\ball')$ 
conditional to $X_0=v$ and $X_{\sigma_\cyl}=w$, yields
\[ \p_v[ \tau_{\ball'} < \sigma_\cyl \giv X_{\sigma_\cyl} = w] 
= \frac{\oloc^{v,w}(\ball' ; \cyl)}{\hloc^{v,w}(\ball' ; \cyl)}, \]
where
\begin{align*}
 \oloc^{v,w}(\ball' ; \cyl) &:= \E_v[ \loc_{\sigma_\cyl}(\ball') \giv X_{\sigma_\cyl} = w] \\
 \hloc^{v,w}(\ball' ; \cyl) &:= \E_v[ \loc_{\sigma_\cyl}(\ball') \giv
X_{\sigma_\cyl} = w, \tau_{\ball'} < \sigma_\cyl].
\end{align*}
We thus arrive at \eqref{eq:hit-prob-center} by showing that uniformly 
in $x,x',v,w$ as $n \to \infty$ followed by $r'=M \to \infty$,
\begin{align}\label{eq:jason1-pf}
 \oloc^{v,w}(\ball' ; \cyl) &\sim \frac{4 (r')^3}{h} \log (R/R') \quad\text{and}
 \\
 \hloc^{v,w}(\ball' ; \cyl) &\sim 2 (r')^2.
 \label{eq:jason2-pf}
\end{align}
Note that by definition 
\[
\oloc^{v,w}(\ball' ; \cyl) = \sum_{u \in \ball'} G^w(v,u), 
\]
for the Green's function $G^w(\cdot,\cdot)$ 
of Lemma~\ref{lem::green_kernel_conditioned_to_exit}.
The estimate for $G^w(\cdot,\cdot)$ given there implies that 
uniformly in $u \in \ball'$ and $v \in \partial \cyl'$,
\[
G^w(v,u)=\frac{3}{\pi h}\log(R/R')(1+o(1))
\]
when $n \to \infty$ followed by $M \to \infty$ (so that $|v-u| \sim R'$).
Since $\ball'$ has to leading order $\frac{4 \pi}{3} (r')^3$ points, this
yields the stated formula~\eqref{eq:jason1-pf} for $\oloc^{v,w}(\ball' ; \cyl)$. 
Further, 
\[
\hloc^{v,w}(\ball' ; \cyl) = \E_v \big[
\oloc^{X_{\tau_{\ball'}},w}(\ball') \giv
X_{\sigma_\cyl}= w, \tau_{\ball'} < \tau_{\cyl} \big] ,
\]
and with $X_{\tau_{\ball'}} \in \partial \ball'$ we 
get~\eqref{eq:jason2-pf} by the uniform in $v'$ asymptotics of 
Lemma~\ref{lem::time_in_ball}.
\end{proof}

\subsection{Tail probabilities for 3D type events}
In this section we establish tail probabilities for $3$D type events, 
which imply~\eqref{eq:bd3} and~\eqref{eq:bd2} in 
the strong sense of Remark~\ref{rmk-bd3}.  
We start with the proof of~\eqref{eq:bd3}.
\begin{prop}
\label{prop::3d_tail_probability}
Fix $x \in V_n$ and let $\CF_{\ball'}$ be the $\sigma$-algebra generated by the 
entrance and exit points of all the excursions 
of $X$ from $\partial \ball'$ to $\partial \ball$.  
For any $s>1$, $1 \ge z > \eta > 0$ there exists $M_0$ such that for every $M \geq M_0$ there exists $n_0 = n_0(M)$ such that $n \geq n_0$ implies that a.s.
\begin{equation}
\label{eqn::3d_tail_probability}
  n^{-\alpha(z+\eta)} \leq  \p_v[ H_{x,z} \giv \CF_{\ball'}] \leq n^{-\alpha(z-\eta)} \,.
\end{equation}
The upper bound holds for all $x,v \in V_n$, with 
$|v-x|>R'$ for the lower bound.
\end{prop}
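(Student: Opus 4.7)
The plan is to condition on $\CF_{\ball'}$ so that the internal parts of the $r$-excursions become conditionally independent (given their entrance/exit points), to estimate the per-excursion hitting probability of $x$ via the killed Green's function on $\ball$, and to match the resulting exponent with $\alpha(z)\log n$. Write $N:=\lfloor z^2\ballE^\star(s)\rfloor$ and
\[
\p_v[H_{x,z}\giv\CF_{\ball'}]=\prod_{i=1}^N\bigl(1-q(v_i',u_i')\bigr),
\quad q(v',u'):=\p_{v'}\bigl[\tau_x\le\sigma_\ball\giv X_{\sigma_\ball}=u'\bigr],
\]
where $(v_i',u_i')\in\partial\ball'\times\partial\ball$ are the prescribed endpoints of the $i$-th internal excursion. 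Strong Markov at $\tau_x$ gives
\[
q(v',u')=\p_{v'}[\tau_x\le\sigma_\ball]\cdot\frac{\p_x[X_{\sigma_\ball}=u']}{\p_{v'}[X_{\sigma_\ball}=u']},
\]
and the cylinder Harnack bound~\eqref{eqn::cylinder_harnack} applied to the positive harmonic function $u\mapsto\p_u[X_{\sigma_\ball}=u']$ on $\ball$ (using $x,v'\in\ball'=\ball(x,r/M)$) shows the ratio is $1+O(M^{-1})$ uniformly in $u'$.

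Next I would compute $q_\star:=\p_{v'}[\tau_x\le\sigma_\ball]=\Green(v',x)/\Green(x,x)$ with $\Green$ the Green's function of $X$ killed on exiting $\ball$. Since $X$ coincides with lazy \abbr{srw} on $\Z^3$ inside $\ball$, a last-exit decomposition combined with the asymptotic $\Green_{\Z^3}(u,x)=(3/\pi)|u-x|^{-1}(1+o(1))$ (the lazy-walk value, twice the non-lazy value in \cite[Theorem~4.3.1]{LL10}) gives $\Green(v',x)=(3/(\pi r'))(1+O(M^{-1}))$ for $v'\in\partial\ball'$, while $\Green(x,x)\to\Green_{\Z^3}(x,x)=12 r_3$ as $M\to\infty$, using that the non-lazy Green's function at the origin equals $1/\p_0[T_0=\infty]=6r_3$ and doubles under laziness. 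Thus $q(v_i',u_i')=q_0(1+O(M^{-1}))$ uniformly, with $q_0:=1/(4\pi r_3 r')$.

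Plugging this into the product and expanding $\log(1-q)=-q+O(q^2)$,
\[
\log\p_v[H_{x,z}\giv\CF_{\ball'}]=-Nq_0\bigl(1+O(M^{-1})\bigr)+O(Nq_0^2).
\]
Since $\ballE^\star(s)=(4sr'/a)\log n$ and $\phi=\pi r_3 a$, one checks $Nq_0=(sz^2/\phi)\log n=\alpha(z)\log n$ and $Nq_0^2=O(\log n/M)$, whence $\log\p_v[H_{x,z}\giv\CF_{\ball'}]=-\alpha(z)\log n\cdot(1+O(M^{-1}))$. Because $\alpha$ is continuous and strictly increasing on $(0,1]$, choosing $M\ge M_0(\eta,z)$ large enough absorbs the $O(M^{-1})$ slack into the gaps $\alpha(z+\eta)-\alpha(z)$ and $\alpha(z)-\alpha(z-\eta)$, giving~\eqref{eqn::3d_tail_probability}.

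The main obstacle will be the uniform-in-endpoints estimate $q(v',u')=q_0(1+O(M^{-1}))$: controlling the dependence on $u'$ requires the Harnack step above, while the two-sided control of $\Green(v',x)$ and $\Green(x,x)$ needs careful use of the $\Z^3$ Green's function asymptotics from \cite[Chapter~1]{lawler} and \cite[Chapter~4]{LL10}. I would also separately handle the edge case $v\in\ball\setminus\ball'$, in which the walk begins in the middle of an excursion; the hitting probability during that initial partial excursion is itself $O(1/M)$ by the same Green's function estimate and is therefore absorbable into the slack, preserving uniformity in $v\in V_n\setminus\ball'$.
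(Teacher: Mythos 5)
Your proposal is correct and follows essentially the same route as the paper: condition on $\CF_{\ball'}$ to get conditional independence of the internal excursion parts, estimate the per-excursion hitting probability (the paper packages this as Lemma~\ref{lem::3d_success_probability}, arriving at the same value $\Delta = 1/(4\pi r_3 r')$ that you call $q_0$, via a slightly different algebraic identity than your Green's-function ratio), control the dependence on the exit point via Harnack, and match $N q_0$ with $\alpha(z)\log n$. One small slip: the Harnack step should invoke the ball version~\eqref{eqn::ball_harnack} rather than the cylinder version~\eqref{eqn::cylinder_harnack}, since $u\mapsto\p_u[X_{\sigma_\ball}=u']$ is harmonic on $\ball(x,r)$ but not on $\cyl(x,r)$ when $r<h$.
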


In order to prove Proposition~\ref{prop::3d_tail_probability}, we 
first estimate the probability that a \abbr{srw} 
starting from the boundary of a ball hits the center 
before exiting a larger ball, conditional on its exit point.
\begin{lem}
\label{lem::3d_success_probability}
Uniformly over $x \in V_n$, $v' \in \partial \ball'$ and $w \in \partial \ball$,
\begin{equation}\label{eq::3d_success}
\p_{v'} [ \tau_x < \sigma_{\ball}\giv X_{\sigma_{\ball}} = w] = (1+O(M^{-1})) \Delta,
\end{equation}
where for $c_3:=3/(2\pi)$ from \cite[Theorem~1.5.4]{lawler} (see \eqref{eqn::time_in_ball_int_rep}),
and $q$ of \eqref{eqn::r3_def},
\begin{equation}
\label{eqn::3d_success_probability}
\Delta = \frac{c_3 q}{r'}.
\end{equation}
\end{lem}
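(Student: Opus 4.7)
The plan is to reduce the conditional hitting probability to an unconditional one by Bayes' rule, control the resulting ratio of harmonic measures via Lemma~\ref{lem::harnack_inequality}, and then evaluate the unconditional hitting probability by standard Green's function asymptotics for SRW on $\Z^3$.

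\emph{Step 1 (remove the conditioning).} Bayes' rule together with the strong Markov property at $\tau_x$ gives
\[
\p_{v'}[\tau_x < \sigma_\ball \giv X_{\sigma_\ball} = w] \;=\; \frac{\p_x[X_{\sigma_\ball}=w]}{\p_{v'}[X_{\sigma_\ball}=w]}\,\p_{v'}[\tau_x < \sigma_\ball] .
\]
The function $u \mapsto \p_u[X_{\sigma_\ball}=w]$ is harmonic on $\ball \setminus \{w\}$, and both $x$ and $v'$ lie in $\ball' = \ball(x,r')$, so Lemma~\ref{lem::harnack_inequality} applied with the pair $(S',S) = (r',r)$ (ratio $M$) yields $\p_x[X_{\sigma_\ball}=w]/\p_{v'}[X_{\sigma_\ball}=w] = 1 + O(M^{-1})$ uniformly in $v' \in \partial \ball'$ and $w \in \partial \ball$. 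Hence it suffices to show that $\p_{v'}[\tau_x < \sigma_\ball] = (1+O(M^{-1}))\Delta$ uniformly in $v'$.

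\emph{Step 2 (hitting probability via Green's functions).} Since $X_k = x$ requires $k \ge \tau_x$, applying the strong Markov property at $\tau_x$ once more gives the standard identity
\[
\p_{v'}[\tau_x < \sigma_\ball] \;=\; \frac{G_\ball(v',x)}{G_\ball(x,x)} ,
\]
where $G_\ball(\cdot,\cdot)$ denotes the Green's function of $X$ killed upon exit from $\ball$. For $M$ fixed and $n$ large we have $r = M^2 \ll h$, so $\ball \subset \ttorus$ is naturally identified, as a subgraph, with the corresponding Euclidean ball in $\Z^3$ and $G_\ball$ agrees with the Green's function of the lazy \abbr{srw} on $\Z^3$ killed at that ball. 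Because the laziness $P^{\text{lazy}} = (I+P)/2$ merely multiplies every Green's function by $2$, this factor cancels in the ratio, so we may compute with the non-lazy Green's function $G^{\Z^3}$.

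\emph{Step 3 ($\Z^3$ asymptotics).} The identity $G^{\Z^3}(x,x) = 1/\p_0[T_0^+ = \infty] = 1/q$ combined with \cite[Theorem~4.3.1]{LL10}, which gives
\[
G^{\Z^3}(v,x) \;=\; \frac{c_3}{|v-x|} + O\bigl(|v-x|^{-2+\epsilon}\bigr), \qquad c_3 = \tfrac{3}{2\pi},
\]
together with the last-exit decomposition $G_\ball(v',x) = G^{\Z^3}(v',x) - \E_{v'}[G^{\Z^3}(X_{\sigma_\ball},x)]$ (and similarly for $G_\ball(x,x)$), shows $G_\ball(x,x) = 1/q + O(r^{-1})$ and $G_\ball(v',x) = (c_3/r')(1 + O(M^{-1}))$ uniformly in $v' \in \partial \ball'$. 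Taking the ratio gives $\p_{v'}[\tau_x < \sigma_\ball] = (1+O(M^{-1}))\,c_3 q/r' = (1+O(M^{-1}))\Delta$, which combined with Step~1 yields \eqref{eq::3d_success}.

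\emph{Main obstacle.} There is no deep difficulty here; the lemma is essentially a packaging of classical local asymptotics for \abbr{srw} on $\Z^3$. The only care points are (i) propagating $O(M^{-1})$ uniformity through the Bayes and Harnack steps in $v'$, $w$, and $x$, and (ii) justifying that the ball in $\ttorus$ behaves identically to a ball in $\Z^3$ for these excursion-level quantities, which is automatic once $r = M^2 = o(h)$.
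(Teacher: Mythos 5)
Your argument is correct, and after the shared Bayes/Harnack reduction in Step~1 it takes a somewhat different route through the $\Z^3$ asymptotics than the paper does. The paper writes
$\p_{v'}[\tau_x < \sigma_\ball] = 1 - \p_{v'}[\wt\tau_x = \infty]/\p_{v'}[\wt\tau_x = \infty \giv \wt\tau_x \ge \wt\sigma_\ball]$
for the \abbr{srw} $\wt X$ on $\Z^3$, then applies the escape-probability asymptotic $\p_{v'}[\wt\tau_x = \infty] \sim 1 - c_3 q/r'$ from \cite[Proposition~6.5.1]{LL10} to each factor. You instead use the Green's function ratio $\p_{v'}[\tau_x < \sigma_\ball] = G_\ball(v',x)/G_\ball(x,x)$ together with a last-exit decomposition to reduce to the diagonal identity $G^{\Z^3}(x,x)=1/q$ and the off-diagonal asymptotic $G^{\Z^3}(v',x)\sim c_3/r'$. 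These are essentially reformulations of one another, since the escape-probability and Green's function asymptotics on $\Z^3$ are equivalent facts; neither is deeper, the paper's version avoids the killed Green's function bookkeeping, and yours is perhaps slightly more self-contained for a reader who only has the asymptotic $G^{\Z^3}(0,x)\sim c_3/|x|$ at hand. One small point worth flagging: the walk $X$ on $\ttorus$ analyzed in Sections~3--5 is the non-lazy \abbr{srw} (each step uniform over the six neighbours; this is how the $1/3$ vertical-step factor in Theorem~\ref{thm-cover-time} and Lemma~\ref{lem::green_kernel_conditioned_to_exit} arises), so the lazy-kernel discussion in your Step~2 is superfluous; fortunately, as you correctly noted, any uniform laziness factor cancels in the ratio $G_\ball(v',x)/G_\ball(x,x)$, so this slip has no effect on the argument.
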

\begin{proof} By Bayes' rule, 
\[ \p_{v'}[ \tau_x < \sigma_{\ball}\giv X_{\sigma_{\ball}} = w]  = \frac{\p_{v'}[X_{\sigma_{\ball}} = w \giv \tau_x < \sigma_{\ball}]}{\p_{v'}[X_{\sigma_{\ball}} = w]} 
\p_{v'}[ \tau_x < \sigma_\ball].\]
By the strong Markov property of $X$ at $\tau_x$ the ratio on the \abbrs{rhs} is
\[ 
\frac{\p_x[X_{\sigma_\ball} = w]}{\p_{v'}[X_{\sigma_\ball} = w]} = 1+O(M^{-1})
\]
(where we used once again Lemma~\ref{lem::harnack_inequality}
for $S'=r'$ and $u \mapsto \p_u[X_{\sigma_\ball}=w]$ harmonic on $\ball$).
Let $\wt{X}$ denote the \abbr{srw} on $\Z^3$ starting at $v'$ and 
$\wt{\tau}_x$, $\wt{\sigma}_{\ball}$ be the corresponding stopping times. Then,
\begin{equation}
\label{eqn::3d_hit_point_before_ball}
 \p_{v'}[\tau_x < \sigma_{\ball}] =
1 - \frac{\p_{v'}[\wt{\tau}_x = \infty]}{ \p_{v'}[\wt{\tau}_x=\infty \giv \wt{\tau}_x \geq \wt{\sigma}_{\ball}]}.
\end{equation}
By \cite[Proposition~6.5.1]{LL10} 
(having same constant $c_3$ as in \cite[Theorem~1.5.4]{lawler}),
\begin{equation}\label{eq:z3-est-unc}
\p_{v'}[\wt{\tau}_x = \infty] \sim 1- \frac{c_3 q}{r'}.
\end{equation}
Applying the strong Markov property at $\wt{\sigma}_{\ball}$, we similarly
find that 
\begin{equation}\label{eq:z3-est-con}
\p_{v'}[\wt{\tau}_x=\infty \giv \wt{\tau}_x \geq \wt{\sigma}_{\ball}] \sim 1-\frac{c_3 q}{r}.
\end{equation}
Combining \eqref{eqn::3d_hit_point_before_ball}--\eqref{eq:z3-est-con} 
yields the stated estimate $\Delta (1+O(M^{-1})$ in \eqref{eq::3d_success}.
\end{proof}

\begin{proof}[Proof of Proposition~\ref{prop::3d_tail_probability}]
If $v \in \ball'$, we only reduce the event $H_{x,z}$ 
by shifting $v$ to the induced (random) first exit of $X$ from $\ball'$. 
Proceeding hereafter with $v \in V_n \setminus \ball'$
the inner parts of the $r$-excursions of $X$ around $x$
are independent of each other given $\CF_{\ball'}$. Thus, the conditional probability considered in~\eqref{eqn::3d_tail_probability} is the product of $z^2 \ballE^\star(s)$ probabilities.  Lemma~\ref{lem::3d_success_probability} implies the existence of
$\delta =\delta(M) \downarrow 0$ as $M \to \infty$ such that each of these probabilities is at most $(1-\Delta+\delta)$, 
uniformly in the initial and terminal points of the excursion. 
In view of \eqref{eq:typical-counts} and \eqref{eqn::3d_success_probability},
\[ 
(1-\Delta)^{z^2 \ballE^\star(s)} \le \exp(-\Delta z^2 \ballE^\star(s)) 
= n^{-\alpha(z)}.
\]
The stated upper bound follows since $\alpha(z-\eta) < \alpha(z)$.  
The complementary lower bound is similarly proved for $v \notin \ball'$.
\end{proof}

We now turn to establish~\eqref{eq:bd2}.
\begin{prop}
\label{prop::3d_excursion_count}
Fix $x' \in V_n$ and let $\CF_{\cyl'}$ be the $\sigma$-algebra generated by the 
entrance and exit points of all the excursions 
of $X$ from $\partial \cyl'$ to $\partial \cyl$.
For any $s>1 \ge z > \eta > 0$ there exists $\gamma > 0$ such that 
for all $n, r' \in \N$ large enough and every
$x \in \ball(x',R'')$, $v \in V_n \setminus \cyl'$, 
we have that a.s.
\begin{equation}
\label{eqn::3d_excursion_count}
 \p_v
 \Big[ \frac{\nballE_{x,z}^{x'}}{\ballE^\star(s)} \notin [(z-\eta)^2, (z+\eta)^2] \,\giv \CF_{\cyl'} \Big]  \leq n^{-\gamma r'}.
\end{equation}
\end{prop}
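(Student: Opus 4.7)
The plan is to reduce the bound to a concentration inequality for the sum
$\nballE_{x,z}^{x'} = \sum_{i=1}^{N} Z_i$, where $N := \lfloor z^2 \cylE^\star(s) \rfloor$ and
$Z_i$ counts the $r$-excursions around $x$ performed during the $i$-th excursion of $X$ from
$\partial \cyl'$ to $\partial \cyl$. Conditioning on $\CF_{\cyl'}$ fixes the entry/exit points
of every such $R$-excursion, and the strong Markov property then makes $\{Z_i\}_{i=1}^N$
conditionally independent, so the whole task reduces to a concentration inequality for
a sum of conditionally independent non-negative random variables.

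For the mean, the ingredients of Proposition~\ref{prop::k_excursions} (combining the hitting
estimate \eqref{eq:hit-prob-center-rep} of Lemma~\ref{lem::probability_hit_center} with
\eqref{eqn::transient_excursion_bound-rep} for the conditional number of excursions after
the first hit) give $\E[Z_i \mid \CF_{\cyl'}] = F_{\ball,\cyl}(1+o(1))$ as
$n\to\infty$ then $M\to\infty$, uniformly in the entry/exit points of the $i$-th
$R$-excursion. Summing yields
$\E[\nballE_{x,z}^{x'} \mid \CF_{\cyl'}] = z^2 \ballE^\star(s)(1+o(1))$ a.s., so for
all large $n,M$ the complementary event in \eqref{eqn::3d_excursion_count} is contained
in $\{|\nballE_{x,z}^{x'} - \E[\nballE_{x,z}^{x'} \mid \CF_{\cyl'}]| \ge c\eta z\, \ballE^\star(s)\}$
for some absolute $c > 0$.

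The key step is the exponential moment bound for $Z_i$. By Lemma~\ref{lem::k_excursions_after_hitting}(a)
--- or more precisely the chain of estimates in its proof, in which each successive
$r$-excursion occurs with conditional probability at most $C/M + o(1)$ --- on
$\{Z_i \ge 1\}$ the excess $Z_i - 1$ is stochastically dominated by a variable with
geometric tail $(C/M)^k$. Together with
$\p[Z_i \ge 1 \mid \CF_{\cyl'}] = F_{\ball,\cyl}(1+o(1))$ (Lemma~\ref{lem::probability_hit_center}),
this gives, for any fixed small $|\lambda|$ and all large $M$,
$$
\E\!\left[ e^{\lambda Z_i} \,\big|\, \CF_{\cyl'}\right]
\le 1 + F_{\ball,\cyl}\bigl( e^{\lambda}(1+O(1/M)) - 1\bigr)
\le \exp\!\bigl( (\lambda + C'\lambda^2) F_{\ball,\cyl}\bigr),
$$
so $\log \E[\exp(\lambda(\nballE_{x,z}^{x'} - \E[\nballE_{x,z}^{x'} \mid \CF_{\cyl'}])) \mid \CF_{\cyl'}] \le C' \lambda^2 z^2 \ballE^\star(s)$
after summing and subtracting the mean. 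An optimized Chernoff bound, applied symmetrically
to the upper and lower tails, then yields
$$
\p\!\left[ |\nballE_{x,z}^{x'} - \E[\nballE_{x,z}^{x'} \mid \CF_{\cyl'}]| \ge c\eta z\, \ballE^\star(s) \,\Big|\, \CF_{\cyl'}\right]
\le 2\exp\!\bigl( -\gamma_0 \eta^2 \ballE^\star(s)\bigr)
$$
for some $\gamma_0 > 0$, which since $\ballE^\star(s) = (4sr'/a)\log n$ is at most
$n^{-\gamma r'}$ for $\gamma := 4s\gamma_0 \eta^2/a > 0$. Uniformity in
$x \in \ball(x',R'')$ and $v \in V_n\setminus\cyl'$ is inherited from the uniformity of
the preceding estimates in the entry/exit points encoded by $\CF_{\cyl'}$.
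The main obstacle is the exponential-moment step: one must use the \emph{sharp} geometric
tail $(C/M)^k$ of Lemma~\ref{lem::k_excursions_after_hitting}(a) rather than mere
summability, so that $Z_i$ behaves like a Bernoulli of mean $F_{\ball,\cyl}$ and the
concentration rate scales with the mean $\ballE^\star(s) \asymp r'\log n$; a crude
stochastic dominance by a Geometric variable of mean $O(M)$ would only deliver a rate
of order $(\log n)/M$, yielding $n^{-c/r'}$ and missing the target by a factor of $(r')^2$
in the exponent.
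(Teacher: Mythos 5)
Your proposal is correct and takes essentially the same approach as the paper: both decompose $\nballE_{x,z}^{x'}$ into conditionally independent per-$R$-excursion counts given $\CF_{\cyl'}$, use Lemma~\ref{lem::probability_hit_center} for the per-excursion hitting probability $\approx F_{\ball,\cyl}$ and Lemma~\ref{lem::k_excursions_after_hitting}(a) for the geometric tail of the excess, and close with a Chernoff bound whose rate scales like $\ballE^\star(s)\asymp r'\log n$. The only cosmetic difference is that the paper handles the two tails via separate stochastic dominations (by Bernoulli$(p_n)$ from below and by Bernoulli$\times(1+\text{Geometric})$ from above), whereas you unify both into a single conditional MGF estimate.
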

\begin{proof}
Fixing $s > 1 \ge z >\eta > 0$ we first show that for some 
$\gamma > 0$ all $n,r' \in \N$ large enough and
every $|x-x'| \le R''$, $v \in V_n$,
\begin{equation}
\label{eqn::too_few_3d_excursions}
\p_v \big[\nballE_{x,z}^{x'} < (z-\eta)^2 \ballE^\star(s) \giv \CF_{\cyl'} 
\big] \leq n^{-\gamma r'}.
\end{equation}
Indeed, $R'' + r < R'$ hence $\ball \subseteq \cyl'$
for all $n$ large enough. When $v \in \cyl'$ we thus may only 
reduce $\nballE_{x,z}^{x'}$ upon using the strong Markov 
property at the first exit of $\cyl'$. Consequently, it 
suffices to establish \eqref{eqn::too_few_3d_excursions}
for $v \notin \cyl'$. In the latter case, by Lemma~\ref{lem::probability_hit_center} 
there exist $\delta=\delta(M) \downarrow 0$ as $M \to \infty$ and 
$n_0 = n_0(M)$ such that for all $n \geq n_0$ the number $Z_\star$ 
of excursions from $\partial \ball'$ to $\partial \ball$ 
within one excursion from $\partial \cyl'$ to 
$\partial \cyl$ is stochastically bounded \emph{below} by a 
Bernoulli($p_n$) variable $J$ with $p_n = (1-\delta) F_{\ball,\cyl}$,
uniformly in $x,x'$ as stated and in 
the initial and terminal points of the excursion.  
Letting $N := z^2 {\ballE}^\star(s)$ and $N' := z^2 {\cylE}^\star(s)$,
the probability considered 
in~\eqref{eqn::too_few_3d_excursions} is thus 
bounded above by 
\[
P_\star:=\P(\sum_{i=1}^{N'} J_i \le (1-\eta/z)^2 N),
\] 
for i.i.d.\ $\{J_i\}$. 
From the definition of $F_{\ball,\cyl}$ we have that 
$N' = N (1-\delta)/p_n$ hence by Markov's inequality 
we deduce that for any $\theta > 0$, 
\begin{align}\label{eq:bd-pstar}
\frac{1}{N} \log(P_\star)
&\leq \theta (1-\eta/z)^2 + \frac{1-\delta}{p_n} \log 
\Big(1 - p_n(1-e^{-\theta}) \Big) .
\end{align}
The function $f(\kappa,\theta) := \theta - \kappa (1-e^{-\theta})$ 
decreases in $\kappa$ and is strictly negative for any 
$\kappa>1$ and $\theta>0$ small enough. Since 
$p_n \to 0$ as $n \to \infty$, the \abbr{rhs} of \eqref{eq:bd-pstar} 
converges to $\kappa^{-1} f( (1-\delta) \kappa,\theta)$, 
where $\kappa=(1-\eta/z)^{-2} >1$. With $\delta(M) \to 0$, there exists 
$\gamma'=\gamma'(\kappa)>0$ such that using $\theta > 0$ sufficiently small 
we get from \eqref{eq:bd-pstar} that for all $M \ge M_1$ and $n \ge n_1$
$$
P_\star \le e^{-\gamma' N} = n^{-\gamma r'}.
$$
Note that, in view of 
\eqref{eq:typical-counts}, the value of 
$\gamma=\frac{4 s}{a} \gamma' z^2  > 0$ is independent of $r'$. A similar argument shows that, by possibly decreasing $\gamma = 
\gamma(s,z,\eta) > 0$, for $v \notin \cyl'$ one has 
\begin{equation*}
\p_v \big[\nballE_{x,z}^{x'} > (z+\eta)^2 \ballE^\star(s) \giv \CF_{\cyl'} 
\big] \leq n^{-\gamma r'}.
\end{equation*}
Indeed, the only difference is that now we need to replace the i.i.d.\ copies of Bernoulli($p$) by i.i.d.\ copies of the product of Bernoulli $\widetilde{J}$ of mean $(1+\delta) F_{\ball,\cyl}$ and $1+Y$ for the Geometric random variable $Y$ of success probability $c_1/M$ as established in 
part (a) of Lemma~\ref{lem::k_excursions_after_hitting}.
\end{proof}

Further, combining Propositions~\ref{prop::3d_tail_probability} and~\ref{prop::3d_excursion_count} we obtain the following.  
\begin{prop}
\label{prop::prob_hit_point}
For $s> 1 \ge z \geq \eta > 0$, let 
$\wh{H}^{x'}_{x,z}$ be the event of not hitting 
$x$ during the first $z^2 \cylE^\star(s)$ excursions 
from $\partial \cyl'$ to $\partial \cyl$. Then, 
there exist finite $n_0=n_0(M)$, 
$M \ge M_0$, such that for every $n \geq n_0$, $x' \in V_n$, 
$x \in \ball(x',R'')$ and $v \in V_n \setminus \cyl'$ we have a.s.
\[ 
n^{-\alpha(z+\eta)} \leq 
\p_v [\wh{H}^{x'}_{x,z} \giv \CF_{\cyl'}]
\leq n^{-\alpha(z-\eta)}.
\]
\end{prop}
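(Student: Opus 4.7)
The plan is to combine the estimates of Propositions~\ref{prop::3d_tail_probability} and~\ref{prop::3d_excursion_count} via the random count $N := \nballE_{x,z}^{x'}$ of $r$-excursions around $x$ that fall within the first $z^2 \cylE^\star(s)$ $R_0$-cylindrical excursions around $y(x')$. Since $x \in \ball' \subset \cyl'$, the event $\wh{H}^{x'}_{x,z}$ is exactly the event that none of these $N$ $r$-excursions visits $x$. Writing $\CF := \CF_{\cyl'} \vee \CF_{\ball'}$ for the larger $\sigma$-algebra generated by the endpoints of both families of excursions, $N$ is $\CF$-measurable, while the inner parts of distinct $r$-excursions are conditionally independent given $\CF$. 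Lemma~\ref{lem::3d_success_probability} then delivers, for any $\epsilon>0$ and all $M$ sufficiently large,
\[
\bigl(1-(1+\epsilon)\Delta\bigr)^{N} \;\le\; \p_v\!\bigl[\wh{H}^{x'}_{x,z} \giv \CF \bigr] \;\le\; \bigl(1-(1-\epsilon)\Delta\bigr)^{N} ,
\]
uniformly in all entrance/exit points.

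The second step is to integrate out $\CF_{\ball'}$ by splitting on the typical range of $N$. Choose $\eta' = \eta/2$ and set $\CE := \{(z-\eta')^2 \ballE^\star(s) \le N \le (z+\eta')^2 \ballE^\star(s)\}$; by Proposition~\ref{prop::3d_excursion_count}, $\p[\CE^c \giv \CF_{\cyl'}] \le n^{-\gamma r'}$ a.s. Recalling the identity $\Delta z^2 \ballE^\star(s) = \alpha(z)\log n$ underlying Proposition~\ref{prop::3d_tail_probability}, monotonicity of $(1-c)^N$ in $N$ together with the elementary bound $(1-c)^K \le e^{-cK}$ gives
\[
\p_v\!\bigl[\wh{H}^{x'}_{x,z} \giv \CF_{\cyl'}\bigr] \;\le\; \bigl(1-(1-\epsilon)\Delta\bigr)^{(z-\eta')^2 \ballE^\star(s)} + n^{-\gamma r'} \;\le\; n^{-(1-\epsilon)\alpha(z-\eta')} + n^{-\gamma r'} .
\]
Since $\alpha$ is strictly increasing on $[0,1]$, taking $\epsilon$ small enough that $(1-\epsilon)\alpha(z-\eta') > \alpha(z-\eta)$, then $M$ large so that also $\gamma r' > \alpha(z-\eta)$ (possible since $\alpha$ is bounded on $[0,1]$ while $r'=M\to\infty$), yields the claimed upper bound for all $n$ large. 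For the lower bound we restrict to $\CE$ and use $(1-c)^K \ge e^{-cK - c^2 K}$; the correction $c^2 K = (1+\epsilon)\Delta\alpha(z+\eta')\log n \cdot (1+O(\epsilon))$ can be absorbed into an adjustment $\epsilon' \to \epsilon$ of the leading factor as $M \to \infty$ (so $\Delta \to 0$), giving
\[
\p_v\!\bigl[\wh{H}^{x'}_{x,z} \giv \CF_{\cyl'}\bigr] \;\ge\; n^{-(1+\epsilon')\alpha(z+\eta')}\bigl(1 - n^{-\gamma r'}\bigr) ,
\]
from which the lower bound follows upon choosing $\epsilon$ small so that $(1+\epsilon)\alpha(z+\eta/2) < \alpha(z+\eta)$.

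The main subtle point is the uniformity of the $O(M^{-1})$ error in Lemma~\ref{lem::3d_success_probability}: it must hold uniformly in the entrance and exit points of every $r$-excursion simultaneously, so that the geometric-product bound above is valid for every realization of $\CF$. Fortunately this uniformity is already established and used in the proof of Proposition~\ref{prop::3d_tail_probability} (and in Remark~\ref{rmk-bd3}), so it is inherited here without further work. The only remaining bookkeeping is the order of limits, which proceeds as is standard in this paper: fix $\eta' = \eta/2$, pick $\epsilon$ small, send $M \to \infty$ (so that $\Delta\to 0$ and $\gamma r' \to \infty$), and finally $n \to \infty$.
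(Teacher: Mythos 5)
Your proof is correct and takes essentially the same route as the paper. The paper gives no explicit argument for Proposition~\ref{prop::prob_hit_point}, saying only that it follows by ``combining Propositions~\ref{prop::3d_tail_probability} and~\ref{prop::3d_excursion_count}''; your write-up supplies exactly the intended combination -- conditioning on the joint $\sigma$-algebra $\CF_{\cyl'}\vee\CF_{\ball'}$, bounding the conditional hitting probability by $(1\pm O(M^{-1}))\Delta$ per $r$-excursion via Lemma~\ref{lem::3d_success_probability}, splitting on the concentration event for $N=\nballE_{x,z}^{x'}$ from Proposition~\ref{prop::3d_excursion_count}, and then absorbing the $n^{-\gamma r'}$ tail and the $O(M^{-1})$, $O(\Delta)$ corrections into the gap between $\alpha(z\pm\eta/2)$ and $\alpha(z\pm\eta)$. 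The identity $\Delta z^2\ballE^\star(s)=\alpha(z)\log n$ checks out from \eqref{eq:typical-counts}, \eqref{eqn::3d_success_probability}, \eqref{eqn::phi_def} and \eqref{eqn::r3_def}, and the required uniformity over excursion endpoints is indeed inherited from Lemma~\ref{lem::3d_success_probability} and Remark~\ref{rmk-bd3} as you note.
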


\section{Proof of Lemma~\ref{lem:2d}: 2D excursion counts at various radii}
\label{sec:2d}

This section is devoted to the proof of~\eqref{eq:bd1}.
To this end, recall our notations of
$R'' = h$, $R = M^2 h$ and for
any fixed $L \in \N$ and $k \in \{0,\ldots,L-1\}$, having $\rho_k = k/L$ and 
$R_k = R \, [n^{\rho_k}]$, while $R_L= [n/M^{5}] \, M^2$.
Fixing $w,z$ and $j \in \{k+1,\ldots,L\}$ we let $\ncylE_{y_k,k,j,w}(s)$ 
as in Definition \ref{defn:NB}
count the number of $R_k$-excursions for $y_k \in \SA_{\twoD,k}$ completed during the $w^2 \cylE^\star(s)$ first $R_j$-excursions for the corresponding $y_j \in \SA_{\twoD,j}$,
with~\eqref{eq:bd1} stating that for each $\eta \in (0, w \wedge z)$ there exists $M_0 = M_0(\eta)$ such that for all $M \geq M_0$ and $n \ge n_0(\eta,M)$
\begin{equation}
\label{prop::2d_excursion_count}
\Big| \frac{\log \p[ \ncylE_{y_k,k,j,w}(s) \le (z-\eta)^2 \cylE^\star(s)]}{\log n} + \frac{2 s (w - z)_+^2}{\rho_j-\rho_k} \Big| \leq \eta \,.
\end{equation}

In Lemma~\ref{lem::excursion_count_approx} we
stochastically dominate $\ncylE_{y_k,k,j,w}(s)$ from
above and below by comparable variables of a
much simpler form and thereby establish
\eqref{prop::2d_excursion_count} upon
studying in Lemma \ref{lem::cramer_excursion}
the tail behavior of the latter variables. 
Specifically, fixing $0 \le k < j \le L$, set for
each $n \in \N$,
\[ p_{k \to j}(n) := \frac{\log R_k - \log R_k'}{\log R_j - \log R_k'} \quad\text{and}\quad p_{j \to k}(n) := \frac{\log R_j - \log R_j'}{\log R_j - \log R'_k}\,.\]
As explained in \cite[Chapter 1]{lawler}, the hitting probabilities 
for \abbr{srw} $X$ within large size cylindrical annulus, have the same 
asymptotic as such probabilities for the corresponding 2D Brownian motion. 
In particular, $p_{k \to j}(n)$ (resp.\ $p_{j \to k}(n)$) approximates the probability that the \abbr{srw} $X$ starting from a point in $\partial \cyl(y_k,R_k)$ (resp.\ $\partial \cyl(y_j,R'_j)$) hits $\partial \cyl(y_j,R_j)$ before hitting $\partial \cyl(y_k,R_k')$ 
(resp.\ hits $\partial \cyl(y_k,R'_k)$ before hitting 
$\partial \cyl(y_j,R_j)$).
Moreover, it is easy to check that
\begin{equation}
\label{eqn::prob_asymp}
\lim_{M \to \infty} \lim_{n \to \infty} \frac{p_{k \to j}(n) \cylE^\star(s)}{\log n}  = \lim_{M \to \infty} \lim_{n \to \infty} \frac{p_{j \to k}(n) \cylE^\star(s)}{\log n} =  \frac{2 s}{\rho_j-\rho_k}.
\end{equation}
We next show that the variables 
$\ncylE_{y_k,k,j,w}(s)$ are stochastically related to
\begin{equation}
\label{eq:zps}
Z_{w,s}(p,p') := \sum_{i=1}^{w^2 \cylE^\star(s)} J_i(1+Y_i),
\end{equation}
where the i.i.d.\ Bernoulli($p$) variables $(J_i)$ are independent of 
the i.i.d.\ Geometric($p'$) variables $(Y_i)$, provided the
parameters $p \in (0,1)$ and $p' \in (0,1)$ are comparable to 
$p_{j \to k}(n)$ and $p_{k \to j}(n)$, respectively.
\begin{lem}
\label{lem::excursion_count_approx}
For every $c > 1$, $w >0$ and $L \ge j > k \ge 0$,
all $M \geq M_0(c,L)$ and $n \geq n_0(c,L,M)$,
if $p > c p_{j \to k}(n)$ and $p' < p_{k \to j}(n)/c$, then the law of $\ncylE_{y_k,k,j,w}(s)$ is stochastically dominated from above by
$Z_{w,s}(p ,p')$. Likewise,
if $p < p_{j \to k}(n)/c$ and $p' > c p_{k \to j}(n)$ then the law of $\ncylE_{y_k,k,j,w}(s)$ is stochastically dominated from below by
$Z_{w,s}(p ,p')$.
\end{lem}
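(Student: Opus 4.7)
The plan is to decompose $\ncylE_{0,k,j,w}(s)$ according to individual $R_j$-excursions and to exhibit within each such excursion a Bernoulli trial (``did the walk reach $\partial\cyl(0,R'_k)$?'') followed by a Geometric number of additional $R_k$-excursions---matching precisely the summand structure $J_i(1+Y_i)$ in~\eqref{eq:zps}. Concretely, write $\ncylE_{0,k,j,w}(s) = \sum_{i=1}^{w^2\cylE^\star(s)} N_i$, where $N_i$ counts the $R_k$-excursions completed during the $i$th $R_j$-excursion. Since $R'_k < R_k < R'_j < R_j$, during the external phase of each $R_j$-excursion the walk cannot reach $\partial\cyl(0,R'_k)$ before first hitting $\partial\cyl(0,R'_j)$; with $\widetilde{J}_i := \one\{\text{walk hits }\partial\cyl(0,R'_k)\text{ during the }i\text{th }R_j\text{-excursion}\}$ one obtains $N_i = \widetilde{J}_i(1+\widetilde{Y}_i)$, where $\widetilde{Y}_i$ counts additional $R_k$-excursions. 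The geometric nature of $\widetilde{Y}_i$ follows by strong Markov: after each completed $R_k$-excursion the walk sits on $\partial\cyl(0,R_k)$ and either first hits $\partial\cyl(0,R'_k)$ (another $R_k$-excursion ensues) or first reaches $\partial\cyl(0,R_j)$ (terminating the $R_j$-excursion), so conditionally on $\widetilde{J}_i=1$ and on the entrance/exit points of the $R_j$-excursion, $\widetilde{Y}_i$ is a genuine Geometric variable.

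Next I would pin down the parameters of these Bernoulli and Geometric variables, uniformly in the entrance point $u\in\partial\cyl(0,R_j)$ and exit point $w\in\partial\cyl(0,R_j)$ of each $R_j$-excursion. The unconditional probabilities of hitting $\partial\cyl(0,R'_k)$ from $\partial\cyl(0,R'_j)$ and of hitting $\partial\cyl(0,R_j)$ from $\partial\cyl(0,R_k)$ equal $p_{j\to k}(n)$ and $p_{k\to j}(n)$ up to a $(1+o(1))$ factor, by the standard comparison (\cite[Chapter~1]{lawler}) of $2$D-\abbr{srw} hitting probabilities on annuli with the explicit logarithmic harmonic measures. Bayes' rule rewrites the probabilities conditional on the exit point $w$ as the unconditional ones times the ratio of harmonic measures $\p_v[X\text{ exits at }w]/\p_u[X\text{ exits at }w]$ evaluated at the relevant intermediate boundary point $v$, and since $v\mapsto\p_v[X\text{ exits at }w]$ is harmonic on $\cyl(0,R_j)$, a planar Harnack inequality for harmonic functions on $2$D annuli (the analogue of Lemma~\ref{lem::harnack_inequality}) forces this ratio to equal $1+O(M^{-1})$ uniformly in $u,w$. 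Consequently, $\p[\widetilde{J}_i=1\mid\text{endpoints}]=p_{j\to k}(n)(1+O(M^{-1}))$, and conditionally on $\widetilde{J}_i=1$ and the endpoints, $\widetilde{Y}_i$ is Geometric with parameter $p_{k\to j}(n)(1+O(M^{-1}))$, uniformly in the endpoints.

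To conclude, fix $c>1$ and choose $M\ge M_0(c,L)$ so that each $1+O(M^{-1})$ factor above lies in $(c^{-1/2},c^{1/2})$, then $n\ge n_0(c,L,M)$ so the estimates apply. Conditionally on the sequence of entrance/exit points of the $R_j$-excursions the pairs $(\widetilde{J}_i,\widetilde{Y}_i)$ are mutually independent by the strong Markov property, so for $p>c\,p_{j\to k}(n)$ and $p'<p_{k\to j}(n)/c$ one can couple each $(\widetilde{J}_i,\widetilde{Y}_i)$ to an independent $(J_i,Y_i)$ distributed as in~\eqref{eq:zps} so that $\widetilde{J}_i(1+\widetilde{Y}_i)\le J_i(1+Y_i)$ pointwise, and averaging the coupling over the endpoints yields the stated stochastic domination from above. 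The opposite regime $p<p_{j\to k}(n)/c$, $p'>c\,p_{k\to j}(n)$ handles the domination from below analogously, by coupling with reversed inequalities. The main obstacle I anticipate is the uniform-in-endpoints version of the hitting-probability estimates: the conditioning on the exit point $w$ distorts them by the harmonic-measure ratio on $\partial\cyl(0,R_j)$ mentioned above, and verifying that this Harnack loss is really $1+O(M^{-1})$ (rather than just an $M$-independent constant) on a $2$D annulus of radius-ratio $M$ is the quantitative heart of the argument.
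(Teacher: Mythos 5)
Your decomposition is exactly the paper's: you split $\ncylE_{0,k,j,w}(s)$ over the internal parts $E_i$ of the $R_j$-excursions and write each term as $\widetilde{J}_i(1+\widetilde{Y}_i)$, with $\widetilde{J}_i$ the indicator of reaching $\partial\cyl(0,R'_k)$ and $\widetilde{Y}_i$ the number of subsequent returns from $\partial\cyl(0,R_k)$ to $\cyl(0,R'_k)$ before exiting $\cyl(0,R_j)$, and you finish with conditional independence given the excursion end-point data, just as the paper does. The one place you diverge is the quantitative, endpoint-uniform control on the Bernoulli and Geometric parameters: the paper outsources it to \cite[Lemma~2.3]{DPRZ-late} (combined with the log-ratio asymptotics from \cite[Exercise~1.6.8]{lawler}), formulated as a $\limsup/\liminf$ as $n\to\infty$ then $M\to\infty$, while you propose a self-contained Bayes-plus-Harnack argument aiming for a $1+O(M^{-1})$ bound. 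For the Bernoulli factor this goes through cleanly along the lines of Lemma~\ref{lem::green_kernel_conditioned_to_exit}, since the strong Markov property at $\tau_{\cyl(0,R'_k)}$ reduces the numerator in Bayes' formula to an average of $\p_v[X_{\sigma_{\cyl(0,R_j)}}=w]$ over $v\in\partial\cyl(0,R'_k)\subset\cyl(0,R'_j)$, and Lemma~\ref{lem::harnack_inequality} applies.

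One flag on the Geometric factor, which you correctly identify as the delicate spot: the conditioning on the exit point there is not merely a harmonic-measure ratio between two interior points. When you decompose the escape event from $v\in\partial\cyl(0,R_k)$ at the first passage through $\partial\cyl(0,R'_j)$, the remaining leg to $w\in\partial\cyl(0,R_j)$ is still conditioned on avoiding $\partial\cyl(0,R'_k)$, so the relevant ratio is $\p_v[X_{\sigma_{\cyl(0,R_j)}}=w,\; A]/(\p_v[A]\,\p_v[X_{\sigma_{\cyl(0,R_j)}}=w])$ with $A$ the escape event, and this is not directly an $f(v')/f(u)$ Harnack quotient. It can be rescued: if $f(v):=\p_v[X_{\sigma_{\cyl(0,R_j)}}=w]$ and $g(v):=\p_v[A,\,X_{\sigma_{\cyl(0,R_j)}}=w]$, then $f-g$ is harmonic on $\cyl(0,R_j)\setminus\cyl(0,R'_k)$, vanishes on $\partial\cyl(0,R_j)$, and equals $f$ on $\partial\cyl(0,R'_k)$; writing it as a hitting-probability average of $f|_{\partial\cyl(0,R'_k)}$ and invoking Harnack once more (all points lie in $\cyl(0,R'_j)$) gives $f(v)-g(v)=(1+O(M^{-1}))\,f(v)\,\p_v[\tau_{\cyl(0,R'_k)}<\sigma_{\cyl(0,R_j)}]$, which does yield $g/f\approx p_{k\to j}(n)$. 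So your route is sound, but this extra step is needed; simply quoting Harnack on the exit measure undersells the work. Also a phrasing caveat: conditionally on endpoints, $\widetilde{Y}_i$ is not literally Geometric (the per-trial escape probability drifts with position on $\partial\cyl(0,R_k)$); what you actually use, and what suffices, is the sandwich by two Geometrics whose parameters bracket $p_{k\to j}(n)$ within a $1+O(M^{-1})$ factor. None of this is a gap in the conclusion — it matches the paper — but the proof proposal is slightly optimistic about how much the single Harnack quotation buys you.
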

\begin{proof}
For each $i$, let $\wt{J}_i$ denote the indicator of the event that the $i$th excursion $E_i$ of the \abbr{srw}
$X$ from $\partial \cyl(y_j,R_j')$ to $\partial \cyl(y_j,R_j)$
hits $\partial \cyl(y_k,R_k')$.  We also let 
$\wt{Y}_i$ denote the number of returns that the \abbr{srw} $X$ makes to $\cyl(y_k,R_k')$ from $\partial \cyl(y_k,R_k)$ before exiting $\cyl(y_j,R_j)$ during $E_i$. Then,
\[ 
\ncylE_{y_k,k,j,w}(s) = \sum_{i=1}^{w^2 \cylE^\star(s)} \wt{J}_i (1+\wt{Y}_i).\]
Let $\CF_{j}$
denote the $\sigma$-algebra generated by the entrance and exit points of all excursions $\{E_i\}$ and $\CF_{j,k}$ denote the $\sigma$-algebra generated by
$\CF_j$ as well as all entrance and exit points of the  excursions of $X$ from
$\partial \cyl(y_k,R'_k)$ to $\partial \cyl(y_k,R_k)$.
By \cite[Lemma~2.3]{DPRZ-late} in the limit $M \to \infty$ 
the probability of the occurrence of $\wt{J}_i$ given 
$\CF_j$ does not depend on the relevant starting and ending points. The same applies
for the probability that $\wt{Y}_i=\ell$ given $\wt{Y}_i \ge \ell$ and 
$\CF_{j,k}$.
Thus, in view of \cite[Exercise 1.6.8]{lawler}, we conclude that,
\begin{equation}
\label{eqn::pn1}
\liminf_{M \to \infty} \liminf_{n \to \infty}
\inf_{i} \Big\{\frac{\p[\wt{J}_i=1 \giv \CF_j]}{p_{j \to k}(n)} \Big\}
 =
\limsup_{M \to \infty} \limsup_{n \to \infty}
\sup_{i} \Big\{\frac{\p[\wt{J}_i=1 \giv \CF_j]}{p_{j \to k}(n)} \Big\}
= 1,
\end{equation}
\begin{align}
\label{eqn::pn2}
\liminf_{M \to \infty} \liminf_{n \to \infty} \inf_{i,\ell}
&\Big\{\frac{\p[\wt{Y}_i = \ell \giv \CF_{j,k},\ \wt{Y}_i \geq \ell]}
{p_{k \to j}(n)}\Big\} \notag \\
& =
\limsup_{M \to \infty} \limsup_{n \to \infty} \sup_{i,\ell}
\Big\{\frac{\p[\wt{Y}_i = \ell \giv \CF_{j,k},\ \wt{Y}_i \geq \ell]}
{p_{k \to j}(n)}
\Big\} = 1.
\end{align}
Combining~\eqref{eqn::pn1} and~\eqref{eqn::pn2} yields the desired result because the excursions $\{E_i\}$ are conditionally independent given $\CF_j$.
\end{proof}

By Lemma~\ref{lem::excursion_count_approx},
it suffices to prove the bounds of
\eqref{prop::2d_excursion_count} for $Z_{w,s}(p_n,p_n')$
in place of $\ncylE_{y_k,k,j,w}(s)$, provided that 
both $p_n/p_{j \to k}(n) \to 1$ and
$p_n'/p_{k \to j}(n) \to 1$. Further, 
in view of~\eqref{eqn::prob_asymp}, when doing so we may consider w.l.o.g.
$p_n'=\kappa p_n$, $\kappa \in (0,\infty)$, taking $n \to \infty$
followed by $\kappa \to 1$. To this end, set
\[ \Lambda_{p,p'} (\theta) := \log \E[ e^{- \theta
J_1(1+Y_1)}] = \log\left( 1 - p + \frac{p p'}{e^{\theta} - 1 + p'} \right) \quad\text{for}\quad \theta \geq 0,\]
and for each $0 \le z \le w \le 1$, let
\[
I_{p,p'}(z,w) := \frac{1}{p} \inf_{\theta \ge 0} \Big\{ z^2 \theta + w^2 \Lambda_{p,p'} (\theta) \Big\},
\]
whose asymptotic as $p'=\kappa p$, $p \to 0$ shall describe 
the tail behavior of $Z_{w,s}(p_n,p_n')$ which is relevant here.
\begin{lem}
\label{lem::i_lambda_properties}
Fix $\kappa \in (0,\infty)$.  Then, we have that
for $w \ge \sqrt{\kappa} z > 0$,
\begin{equation}
\label{eqn::I-val}
I_\kappa (z,w) := \lim_{p \to 0} I_{p,p\kappa} (z,w) = \inf_{v \ge 0} \left( v z^2  - \frac{v w^2}{\kappa+v}  \right) =
- (w - \sqrt{\kappa} z)^2.
\end{equation}
Let $\theta_p \in [0,\infty)$ be the unique value so that $\Lambda_{p,\kappa p}'(\theta_p) = -(z/w)^2$.  Then,
\begin{align}
\label{eqn::theta_p_limit}
\lim_{p \to 0} &\frac{\theta_p}{p} =
\sqrt{\kappa} \frac{w}{z} - \kappa := v_{\star} \ge 0,\\
\label{eqn::mgf_2nd_deriv_limit}
 \lim_{p \to 0} &p^2 \Lambda_{p,\kappa p}''(\theta_p) = 0.
\end{align}
\end{lem}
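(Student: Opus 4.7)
The plan is to exploit the rescaling $\theta = vp$, under which the optimization problem defining $I_{p,\kappa p}(z,w)$ becomes well-behaved as $p \downarrow 0$. Setting
\[ F_p(v) := z^2 v + \tfrac{w^2}{p}\,\Lambda_{p,\kappa p}(vp), \qquad v \ge 0, \]
we have $I_{p,\kappa p}(z,w) = \inf_{v\ge 0} F_p(v)$, and the candidate limit is $F(v) := z^2 v - w^2 v/(v+\kappa)$. First I would Taylor-expand: using $e^{vp}-1 = vp + O(p^2)$,
\[ \frac{p\,p'}{e^\theta - 1 + p'}\bigg|_{\theta=vp,\,p'=\kappa p} = \frac{\kappa p^2}{(v+\kappa)p + O(p^2)} = \frac{\kappa p}{v+\kappa} + O(p^2), \]
so $\Lambda_{p,\kappa p}(vp) = \log\bigl(1 - \tfrac{vp}{v+\kappa}(1+O(p))\bigr) = -\tfrac{vp}{v+\kappa} + O(p^2)$, with error uniform on compact subsets of $[0,\infty)$. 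Dividing by $p$, we obtain $F_p \to F$ locally uniformly on $[0,\infty)$.

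Next I would pass to the infimum. For small $p$, each $F_p$ is strictly convex on $[0,\infty)$ (since $\Lambda$ is convex) and $F_p(v) \ge z^2 v + w^2 \log(1-p)/p$, whose right-hand side has asymptotic slope $z^2 > 0$; the limit $F$ likewise has slope $z^2$ at infinity and is strictly convex. Thus minimizers stay in a compact set uniformly in small $p$, so local uniform convergence delivers $\inf_{v\ge 0} F_p(v) \to \inf_{v\ge 0} F(v)$ together with convergence of the (unique) minimizers $v_p := \theta_p/p \to v_\star$. The first-order condition $F'(v) = z^2 - w^2\kappa/(v+\kappa)^2 = 0$ gives $v_\star = \sqrt{\kappa}\,w/z - \kappa$, which is non-negative precisely when $w \ge \sqrt{\kappa}\,z$; substituting (for instance via $u := v_\star + \kappa = \sqrt{\kappa}\,w/z$ and $F(v_\star) = (u-\kappa)(z^2 - w^2/u)$) yields $F(v_\star) = -(w-\sqrt{\kappa}\,z)^2$, establishing \eqref{eqn::I-val} and \eqref{eqn::theta_p_limit}.

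For \eqref{eqn::mgf_2nd_deriv_limit}, I would write $\Lambda = \log g$ with $g(\theta) = 1 - p + pp'/(e^\theta - 1 + p')$, so $\Lambda'' = g''/g - (g'/g)^2$. A direct computation using the same small-$p$ expansion at $\theta = v_\star p$ gives $g(v_\star p) = 1 + O(p)$, $g'(v_\star p) = -\kappa/(v_\star+\kappa)^2 + O(p)$ (which is $O(1)$), and
\[ g''(v_\star p) = \frac{2\kappa}{(v_\star+\kappa)^3\,p}\bigl(1+o(1)\bigr), \]
so $\Lambda''_{p,\kappa p}(v_\star p) = O(1/p)$. Combined with $\theta_p = v_\star p + o(p)$ from \eqref{eqn::theta_p_limit} and continuity of $\Lambda''$, this yields $p^2 \Lambda''_{p,\kappa p}(\theta_p) = O(p) \to 0$.

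The main technical step is justifying the interchange of limit and infimum; this is handled by the standard principle that locally uniformly convergent strictly convex functions with matching coercivity have converging minima, and coercivity follows from the uniform lower bound on $F_p$ noted above. Everything else is direct (if slightly tedious) Taylor expansion in $p$ about the scaling $\theta \sim p$.
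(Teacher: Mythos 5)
Your proof is correct, and it follows the same broad strategy as the paper — a rescaling of $\theta$ on the scale of $p$, compactness of minimizers uniformly in small $p$, interchange of limit and infimum, and identification of $v_\star$ from the first-order condition — but with a genuinely different substitution and a different treatment of the second-derivative estimate. The paper substitutes $\theta = \log(1+pv)$, which lets one write the objective as $z^2 f_p(v) + w^2 f_p(-v/(\kappa+v))$ with $f_p(v) := p^{-1}\log(1+pv) \uparrow v$ monotonically; this makes the pointwise limit transparent with essentially no expansion, and it is also what drives their proof of \eqref{eqn::mgf_2nd_deriv_limit}, since $\Lambda_{p,\kappa p}(\theta) = p\,f_p(F_0(v))$ can be differentiated twice in $v$ with the chain rule to produce the factor $p^2$ structurally. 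Your choice $\theta = vp$ is simpler but requires the Taylor expansion of $\Lambda_{p,\kappa p}(vp)$ you carry out; in return, your argument for \eqref{eqn::mgf_2nd_deriv_limit} is a direct estimate of $g, g', g''$ at $\theta_p$, which is more elementary than the paper's rearranged chain-rule identity $p^2\Lambda'' = p(1+pv)(F_p''(v)(1+pv) + pF_p'(v))$. One small point to make fully rigorous in your version: when you invoke ``continuity of $\Lambda''$'' to move from $v_\star p$ to $\theta_p$, what is really being used is that the asymptotic $g''(vp) = \tfrac{2\kappa}{(v+\kappa)^3 p}(1+o(1))$ holds uniformly for $v$ in compacts, together with $\theta_p/p \to v_\star$; since $v+\kappa \ge \kappa > 0$ this uniformity is immediate, and it also covers the boundary case $v_\star = 0$ (i.e.\ $w = \sqrt{\kappa}z$, where $\theta_p \equiv 0$). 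With that clarification the proof is complete.
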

\begin{proof}
We begin by making the substitution $\theta := \log(1+p v)$ for $v \geq 0$, and setting $f_p(v):=p^{-1} \log (1+pv)$
rewrite $I_{p,\kappa p}(z,w)$ as
\begin{equation}
\label{eqn::I_general_def}
I_{p,\kappa p}(z,w) = \inf_{v \geq 0}
\Big\{  z^2 f_p(v) + w^2 f_p(\frac{-v}{\kappa+v}) \Big\}.
\end{equation}
Since $f_p(v) \uparrow \infty$ as $v \to \infty$,
the infimum in~\eqref{eqn::I_general_def} is attained
at some finite $v_p$. Further, with $p \mapsto f_p(v)$
non-increasing,
there exists a universal finite constant $V$ such that
$v_p$ takes its values in $[0,V]$ as $p \to 0$ and $\kappa$ fixed. This allows us to change the order of the limit in $p$ and the infimum
over $v$, yielding
\[
I_\kappa (z,w) = \inf_{v \geq 0} \lim_{p \to 0} \Big\{ z^2
f_p(v) + w^2 f_p(\frac{-v}{\kappa+v}) \Big\}.
\]
Since $f_p(v) \to v$ for $p \to 0$, the first assertion
of the lemma follows
upon verifying that the infimum in~\eqref{eqn::I-val}
is achieved at $v_{\star} \ge 0$.

As for confirming~\eqref{eqn::theta_p_limit}
and~\eqref{eqn::mgf_2nd_deriv_limit},
let
$F_p(v):=f_p(F_0(v))$ for $F_0(v)=-v/(\kappa+v)$, so
$\Lambda_{p,\kappa p}(\theta) = p F_p(v)$, under the substitution
$\theta = \log(1+pv)$. Differentiating both sides of this identity twice and rearranging, we find that
\begin{equation}\label{eq:theta-v}
p^2 \Lambda_{p,\kappa p}''(\theta) =
p(1+p v) \big( F_p''(v)(1+pv) + pF_p'(v) \big).
\end{equation}

Since the infimum in the definition of $I_{p,\kappa p}(z,w)$
is attained at $\theta_p$, necessarily $\theta_p = p f_p(v_p)$.
Thus, as $p \to 0$ we have that
$p^{-1} (e^{\theta_p}-1) = v_p \to v_{\star}$,
from which~\eqref{eqn::theta_p_limit} follows.
Further, $F_p'(v_p) \to F_0'(v_\star)$ and
$F_p''(v_p) \to F_0''(v_\star)$, yielding
\eqref{eqn::mgf_2nd_deriv_limit} in view of
\eqref{eq:theta-v}.
\end{proof}

As explained before, the required bounds 
\eqref{prop::2d_excursion_count}
are established by combining Lemma~\ref{lem::excursion_count_approx}
with our next lemma,
then taking $\kappa \to 1$ (we have the required
boundedness of $p_n \log n$ by 
\eqref{eq:typical-counts} and~\eqref{eqn::prob_asymp}).

\begin{lem}
\label{lem::cramer_excursion}
Fix $s \geq 1$, $\kappa \in (0,\infty)$ and $w \ge \sqrt{\kappa} z>0$.
If $p_n \log n$ are uniformly bounded above and uniformly bounded
away from zero, then
\begin{equation}
\label{eqn::cramer_excursion}
\lim_{n \to \infty} \frac{1}{p_n \cylE^\star(s)} \log \p[ Z_{w,s}(p_n,\kappa p_n) \leq z^2 \cylE^\star(s) ] = - (w - \sqrt{\kappa} z)_+^2.
\end{equation}
\end{lem}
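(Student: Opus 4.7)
My plan is to establish the two matching halves of~\eqref{eqn::cramer_excursion} via a Cram\'er-style moderate-deviation argument, with Lemma~\ref{lem::i_lambda_properties} supplying every asymptotic ingredient I need about the rate function and the optimal tilt. Throughout write $N := w^2 \cylE^\star(s)$ and $Z := Z_{w,s}(p_n,\kappa p_n)$, so that by independence $\E[e^{-\theta Z}] = \exp(N\, \Lambda_{p_n,\kappa p_n}(\theta))$ for every $\theta \ge 0$. For the upper bound I would apply Markov's inequality to $e^{-\theta Z}$ and optimize over $\theta \ge 0$, yielding
\[
\p[Z \le z^2 \cylE^\star(s)] \;\le\; \exp\bigl( p_n \cylE^\star(s)\, I_{p_n,\kappa p_n}(z,w) \bigr),
\]
after which dividing by $p_n \cylE^\star(s)$ and invoking~\eqref{eqn::I-val} of Lemma~\ref{lem::i_lambda_properties} immediately gives $\limsup \le -(w - \sqrt{\kappa}\, z)^2$.

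For the matching lower bound I would exponentially tilt $Z$ by $\theta_{p_n}$, i.e.\ introduce $\wt{\p}$ with Radon--Nikodym derivative $d\wt{\p}/d\p = e^{-\theta_{p_n} Z}/\E[e^{-\theta_{p_n} Z}]$. By the defining property of $\theta_{p_n}$ the summands $J_i(1+Y_i)$ remain i.i.d.\ under $\wt{\p}$ with mean $-\Lambda'_{p_n,\kappa p_n}(\theta_{p_n}) = z^2/w^2$ and single-term variance $\Lambda''_{p_n,\kappa p_n}(\theta_{p_n})$, so $\wt{\E}[Z] = z^2 \cylE^\star(s)$ and the $\wt{\p}$-variance of $Z$ is $\sigma_n^2 := N\, \Lambda''_{p_n,\kappa p_n}(\theta_{p_n})$. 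For a window $A_n := [z^2\cylE^\star(s) - a_n,\, z^2\cylE^\star(s)]$ with $a_n > 0$ to be chosen, unfolding the change of measure gives
\[
\p[Z \le z^2 \cylE^\star(s)] \;\ge\; \p[Z \in A_n] \;\ge\; \exp\bigl(p_n \cylE^\star(s)\, I_{p_n,\kappa p_n}(z,w) - \theta_{p_n} a_n\bigr)\, \wt{\p}[Z \in A_n].
\]
Taking logs and dividing by $p_n\cylE^\star(s)$, the task reduces to choosing $a_n$ so that $\theta_{p_n} a_n = o(p_n \cylE^\star(s))$ while $\wt{\p}[Z \in A_n]$ stays bounded away from $0$.

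The two remaining conclusions of Lemma~\ref{lem::i_lambda_properties} control both requirements. Relation~\eqref{eqn::theta_p_limit} gives $\theta_{p_n}/p_n = O(1)$, reducing the first condition to $a_n = o(\cylE^\star(s))$, whereas~\eqref{eqn::mgf_2nd_deriv_limit}, combined with the standing hypothesis $p_n \cylE^\star(s) = \Theta(\log n)$, yields $\sigma_n^2/\cylE^\star(s)^2 \to 0$. I would then pick $a_n := \sqrt{(\sigma_n \vee 1)\, \cylE^\star(s)}$, so that $\sigma_n \ll a_n \ll \cylE^\star(s)$, and Chebyshev's inequality forces $\wt{\p}[\lvert Z - z^2 \cylE^\star(s)\rvert \le a_n] \to 1$. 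The main technical obstacle is upgrading this two-sided concentration to a one-sided lower bound on $\wt{\p}[Z \in A_n]$; I would obtain $\wt{\p}[Z \le z^2 \cylE^\star(s)] \to 1/2$ by invoking the Lindeberg CLT for the tilted triangular array, whose single-term law under $\wt{\p}$ is again a mixture of a point mass at $0$ and a shifted Geometric in the same parametric family, so the Lindeberg condition follows from the explicit parameter asymptotics already packaged in Lemma~\ref{lem::i_lambda_properties}. Combining the two-sided Chebyshev bound with this one-sided CLT statement forces $\wt{\p}[Z \in A_n] \to 1/2$, which closes the lower bound and completes the proof.
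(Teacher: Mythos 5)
Your upper bound is identical to the paper's: Chernoff plus \eqref{eqn::I-val}. For the lower bound you take a genuinely different route. The paper tilts so that the tilted mean lands at $(z-\delta)^2\cylE^\star(s)$ (i.e.\ picks $\theta_p$ with $\Lambda'_{p,\kappa p}(\theta_p) = -(z-\delta)^2/w^2$), restricts to the window $[(z-2\delta)^2\cylE^\star(s),\, z^2\cylE^\star(s)]$ which then contains the tilted mean in its interior at fixed macroscopic distance from both endpoints, and so Chebyshev alone (via \eqref{eqn::mgf_2nd_deriv_limit} and $p_n^2\cylE^\star(s)$ bounded away from zero) forces $\p_{\theta_p}[A]\to 1$; the final step sends $\delta\to 0$. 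You instead tilt the mean exactly onto the right endpoint $z^2\cylE^\star(s)$ of your window, which is the standard ``exact'' Cram\'er tilt but then necessarily leaves you estimating a one-sided probability $\wt{\p}[Z \le \wt{\E}Z]$, and no amount of Chebyshev can bound that below, so you are forced into a CLT. The paper's $\delta$-shift trick is precisely the device that avoids needing a CLT in Cram\'er-type lower bounds, and it pays off here since the summands are not a fixed law but an $n$-dependent triangular array.

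The one genuine gap is your claim that the Lindeberg condition ``follows from the explicit parameter asymptotics already packaged in Lemma~\ref{lem::i_lambda_properties}.'' That lemma controls $\theta_p/p$ and $p^2\Lambda''_{p,\kappa p}(\theta_p)$, i.e.\ first- and second-moment asymptotics of the tilted law, but the Lindeberg (or Lyapunov) condition is a truncated-second-moment / tail statement, which is not contained there. It is in fact true: under the tilt the nonzero part of $J_i(1+Y_i)$ is geometric with parameter of order $p_n$, the standardizing scale satisfies $\sigma_n p_n \to \infty$ under your standing hypotheses, so the truncated second moment beyond $\epsilon\sigma_n$ decays super-polynomially. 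But this calculation has to be carried out; citing Lemma~\ref{lem::i_lambda_properties} does not do it. If you want to avoid the Lindeberg check entirely, adopt the paper's $\delta$-shifted tilt: then the window sits symmetrically (at the $\cylE^\star$-scale) around the tilted mean, Chebyshev closes the argument, and no CLT is invoked.
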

\begin{proof}
Fix $s \ge 1$, $\kappa \in (0,\infty)$ and $w \ge \sqrt{\kappa} z >0$. Now, for any $p \in (0,1)$
we get by applying Chernoff's bound, then optimizing over $\theta \geq 0$ that
\begin{equation}
\label{eqn::cramer_excursion_ubd}
\frac{1}{p \cylE^\star(s)} \log \p[ Z_{w,s}(p,\kappa p) \leq
z^2 \cylE^\star(s) ] \le I_{p, \kappa p}(z,w)\,.
\end{equation}
Thus, in view of~\eqref{eqn::I-val},
considering $p=p_n \to 0$ yields the
upper bound in~\eqref{eqn::cramer_excursion}.

For the lower bound we use a change of measure
analogous to the proof of the lower bound in
Cramer's theorem (see \cite[Theorem~2.2.3]{DZ-LD}).
Specifically, fixing $p \in (0,1)$ and $\delta > 0$
small (we eventually send $\delta \to 0$), set
$\theta = \theta_p \geq 0$ be the unique value
such that $\Lambda_{p,\kappa p}'(\theta_p) = -(z-\delta)^2/w^2$ and probability
measure $\p_\theta$ given by
\[
\frac{d\p_\theta}{d\p} =
\exp\Big( -\theta Z_{w,s}(p,\kappa p)
- w^2 \cylE^\star(s) \Lambda_{p,\kappa p}(\theta) \Big)\,.
\]
Considering event
$A_{p,\kappa p} = \{ (\cylE^\star(s))^{-1}
Z_{w,s}(p,\kappa p) \in [(z-2\delta)^2, z^2] \}$,
we clearly have then
\begin{align}
\p[A_{p,\kappa p}]
\geq
\p_\theta[A_{p,\kappa p}] \exp\big( w^2 \cylE^\star(s) \Lambda_{p,\kappa p}(\theta) + \theta (z-2\delta)^2 \cylE^\star(s) \big). \label{eqn::prob_lower_bd}
\end{align}
Adding and subtracting
$\theta (z-\delta)^2 \cylE^\star(s)$
in the exponent on the \abbr{RHS} of
\eqref{eqn::prob_lower_bd},
then setting there $\theta=\theta_p$, we see that
$\p[A_{p,\kappa p}]$
is further bounded below by
\begin{align*}
\p_{\theta_p} [ A_{p,p'} ] \exp\big( p \cylE^\star(s)
I_{p, \kappa p}(z-\delta,w) - \eta \cylE^\star(s) \theta_p\big),
\end{align*}
where $\eta:=(z-\delta)^2 - (z-2\delta)^2$.
We now complete the proof by taking $p = p_n$ (we will suppress the subscript $n$). Indeed,
note that under $\p_\theta$ the variables $J_i(1+Y_i)$ are
i.i.d.\ each having mean $(z-\delta)^2/w^2$ and variance
$\Lambda_{p,\kappa p}''(\theta)$.  Further, $p_n^2 \cylE^\star(s)$ is bounded away from zero,
so by~\eqref{eqn::mgf_2nd_deriv_limit} we see that
$\var_{\theta_p}\big(\cylE^\star(s)^{-1} Z_{w,s}(p,\kappa p)\big)
\to 0$ as $n \to \infty$, while
$\E_{\theta_p} [\cylE^\star(s)^{-1} Z_{w,s}(p,\kappa p)] = -w^2 \Lambda'_{p,\kappa p}(\theta_p)=(z-\delta)^2$. Consequently,
\[ \lim_{n \to \infty} \frac{1}{p \cylE^\star(s)} \log \p_{\theta_p} [ A_{p,\kappa p}] = 0.\]
Hence, by~\eqref{eqn::I-val}
and
\eqref{eqn::theta_p_limit} we have that
\[ \liminf_{n \to \infty} \frac{1}{p \cylE^\star(s)} \log \p[Z_{w,s}(p,\kappa p) \leq z^2 \cylE^\star(s)] \geq -(w - \sqrt{\kappa} (z-\delta))_+^2 - 2 \eta .\]
The stated lower bound follows
by considering $\delta \to 0$ (so
$\eta \to 0$ as well).
\end{proof}

\section{Lower bound on mixing time: 
effective clustering in $\CU(s\tcovp)$}\label{sec-lbd}

Let $\Q_{s'}$ denote the 
law of the lamps configuration of $X^\diamond$ at time $s' \tcovp$,
starting  from all lamps off (and walker at the point 
$0 \in \ttorus$), with $\Q_\infty$ the uniform
law over the set of $2^{|V_n|}$ possible lamp configurations.
We claim that $\| \Q_{s'} - \Q_\infty \|_{\TV} \to 1$ 
when $n \to \infty$, 
for fixed $s'=(1-\epsilon) s$, any $s<\Psi(\phi)$ and $\epsilon>0$.
Obviously, then $\tmix \ge s' \tcovp$ for such $s'$, which 
in view of the upper bound on $\tmix$ we proved in
Section~\ref{sec:ubd-mix}, establishes the stated 
cut-off and thereby proves Theorem \ref{thm:cut-off}.
 
To prove this claim, 
fix $\epsilon>0$ and $s <\Psi(\phi)$, noting that
in view of \eqref{eq:alp-rho} and
the variational formulation~\eqref{eq:t-lbd}
of $\Psi(\phi)$, there exist $\rho$ 
and $w > z > (1+w/\rho)\delta$, all in $(0,1]$, 
such that for small enough $\delta>0$,
\begin{equation}\label{eq:lbd-par}
b_{\rho}(w-\delta) \ge 2\delta \quad \text{and} \quad
\alpha(z+ 3 \delta) + \lambda (\rho-\delta) \le \rho - 5\delta \,,
\end{equation}
where further, by \eqref{eq:la-under-2} and the assumed range of $z$,
\begin{equation}\label{eq:h-val}
\lambda:=2s \frac{(w-z+\delta)^2}{(\rho-\delta)^2} < 2 
\quad \text{and} \quad 
A:= \frac{(z-\delta)\rho-w\delta}{w-z+\delta} > 0 \,.
\end{equation}

Using hereafter these parameters, we considerably shorten our proof by 
taking advantage of the results of \cite{DPRZ-late} and 
\cite{DPR-quarter} (which we apply here 
for the 2D projection of the \abbr{srw} on $\ttorus$).
For this purpose, we change our cylinders radii
somewhat and consider throughout this section 
\[
R_k=R'_{k+1}=(k!)^3, \quad k=1,\ldots,m
\]
with $m \in \N$ such that for some $\bar{\gamma} \in [b+12,b+16]$
\[
n= K_m: = m^{\bar{\gamma}} R_m 
\]
(and $b \ge 10$ a universal constant from
\cite[Lemma~4.2]{DPRZ-late}).
Next, let
$\CZ_m$ denote a maximal set of $4R_{\rho m+4}$-separated 
points on the $2$D base of $\ttorus$ excluding those within 
distance $R_m$ of the starting position $0$ of the
2D projected \abbr{srw}, such that $(0, 2R_m)\in \CZ_m$ (so
$\CZ_m$ is precisely the set considered
in \cite[Equation~(10.3)]{DPRZ-late}, taking there 
$\beta = \rho$ and $K_m = n$). 
Further, set
$$
{\CZ}_m' := {\CZ}_m \bigcap \bigcup_{v_i} \cyl(v_i,R_{m-2}),
$$
for a collection $\{v_i\}$ that forms a maximal $4 R_{m}$-separated set
on the 2D base of $\ttorus$. Next, for any
$v \in \CZ_m'$, let $\kappa_n(v)$ count the vertices 
of $\cyl(v,R_{\rho m - 2}) \subset \ttorus$, and 
$D^v$ denote the difference of number of ``off-lamps'' 
minus ``on-lamps'' among these $\kappa_n(v)$ vertices.
Considering the statistics
$$
U_n = \max_{v \in {\mathcal Z}_m'} \; \{D^v\},
$$
it suffices to show that as $n \to \infty$,
\begin{align}\label{eq:tv-lbd}
\Q_\infty[U_n \ge n^{\rho+\delta}] \to 0
\quad \text{and} \quad
\Q_{s'}[U_n < n^{\rho+\delta}] \to 0 \,.
\end{align}
We proceed with the proof of~\eqref{eq:tv-lbd}, establishing 
in {\bf Step I} the easy part, namely its \abbr{lhs}. 
Introducing 
$n_m(2s):=6sm^2 \log m$ and
\begin{align}\label{dfn:hatU}
\wh{U}^v := \big|\{ x \in \cyl(v,R_{\rho m-2}) : \;\;
x & \hbox{ unvisited in first } n_m(2s)  
\hbox{ excursions by} \nonumber \\ 
& \hbox{the \abbr{srw} from } \partial \cyl(v,R_m') 
\hbox{ to } \partial \cyl(v,R_m) \ \big| \,,
\end{align}
we reduce in {\bf Step II}
the \abbr{rhs} of~\eqref{eq:tv-lbd} 
to having \abbrs{whp} some $v \in \CZ_m'$ with large enough  
$\wh{U}^v$ (see \eqref{eq:shift}). We now need the following additional notations.
\begin{defn}\label{def:barU}
For a maximal set ${\CZ}_{\delta m}(v)$ of
$4 R_{\delta m}$-separated points in the $\twoD$ 
projection of $\cyl(v,R_{\rho m -2})$ on the base of $\ttorus$, let:
\newline
(a) $W^v$ count
points in $\CZ_{\delta m}(v)$ for
whose $R_{\delta m}$-sized cylindrical annulus 
the \abbr{srw} completed at most $z^2 n_m(2s)$ excursions
during its first $w^2 n_m(2s)$ excursions from 
$\partial \cyl(v,R'_{\rho m})$ to $\partial \cyl(v,R_{\rho m})$.
\newline
(b) ${\bar U}^v \le W^v$
count those $y$ from (a), for which in addition
$x=(y,0)$ is not visited during the first $z^2 n_m(2s)$ excursions
from $\partial \cyl(x,R_{\delta m}')$ to $\partial \cyl(x,R_{\delta m})$.
\end{defn}
\noindent
{\bf Step III} shows that \abbrs{whp}
$\wh{U}^{v_\star} \ge {\bar U}^{v_\star}$ for some $v_\star \in \CZ_m'$. 
Indeed, it clearly suffices to have at most $w^2 n_m(2s)$ of the
$R_{\rho m}$-excursions of $v_\star$ within the first $n_m(2s)$ 
of its $R_m$-excursions. This applies to
pre-qualified points from \cite[Section 10]{DPRZ-late}, so 
we complete this step by showing that \abbrs{whp} 
the relevant count $\Wpq(m)$ of pre-qualified points 
(see \eqref{def:WPQ}), is positive.
{\bf Step~IV} then converts the conditional statement of bounding below ${\bar U}^{v_\star}$ (for the random $v_\star$), into such a statement for non-random $v$,
which we verify under the condition of $W^v$ large enough
(see \abbr{rhs} of \eqref{eq:A-holds}).
We complete the proof of the latter (see \abbr{lhs} of \eqref{eq:A-holds}),
by applying in {\bf Step~V} the concept of pre-sluggish 
points from \cite[Section 6]{DPR-quarter}.

\medskip
\noindent
{\bf Step I}. Note that under $\Q_\infty$ the
variables $\{D^v, v \in {\CZ}_{m}'\}$ are
mutually independent, with $D^v$ having the law
of the sum of $\kappa_n(v)$ i.i.d.\ symmetric
$\{\pm 1\}$-valued variables $\{I^v_j\}$.
Further, $\sup_v \kappa_n(v) \le C h n^{2\rho}$
and $|{\mathcal Z}_m'| \le C n^{2(1-\rho)}$
for some $C$ finite and all $n$.
Recall that $\E[e^{\zeta I^v_j}] \le e^{\zeta^2/2}$
for all $\zeta$, hence by the union bound over at
most $C h n^2$ values of $v \in {\mathcal Z}_{m'}$
and the uniform tail bound
\begin{align}\label{eq:unif-tail-bd}
\sup_{r \le C h n^{2\rho}}
\p\Big[ \sum_{j=1}^r I^v_j \ge n^{\rho+\delta} \Big]
\le e^{-n^{\delta} + C h/2} ,
\end{align}
we conclude that the \abbr{lhs} of~\eqref{eq:tv-lbd}
holds for any $\delta>0$. 

\medskip
\noindent
{\bf Step II}. 
Turning to the \abbr{rhs} of~\eqref{eq:tv-lbd}, 
let $N\cyl_{v,m} (s')$ count the $R_m$-excursions 
for cylindrical annuli centered at $v$ on the 2D base of $\ttorus$, 
made by the \abbr{srw} on $\ttorus$ up to time $s' \tcovp$.
Note that $\log n =(3+o(1)) m \log m$ and
for $R/R'=R_m/R_m'=m^3$ the value of $\cylE^\star(s)$ of~\eqref{eq:typical-counts} 
is within $1+o(1)$ (as $n \to \infty$), of $n_m (2s)= 6s m^2 \log m$ (from \cite{DPRZ-late}). 
Hence, analogous to part (a) of Definition \ref{dfn:Gz} we have that
\begin{equation}\label{eq:As-likely}
\lim_{n \to \infty} \p\big(\max_v \{ N \cyl_{v,m}(s') \} > n_m(2s) \big) = 0,
\end{equation}
where the maximum is over all $n^2$ vertices $v$ on the $2$D base of $\ttorus$.
Indeed, combining the tail bound \cite[Equation~(3.18)]{DPRZ-late} for
the aggregate number of steps during the first $n_m(2s)$ such 
$R_m$-excursions for fixed $v$, with standard exponential 
tail bounds on the number of actual steps taken by our 
$\frac{2}{3}$-lazy projected $2$D \abbr{srw}, we thus deduce 
that $n^{2} \p(N \cyl_{v,m}(s') > n_m(2s) ) \to 0$ and the 
union bound over $v$ results with \eqref{eq:As-likely}.

We now show that the \abbr{rhs} of~\eqref{eq:tv-lbd} holds as soon as
\begin{equation}\label{eq:shift}
\lim_{n \to \infty} \p[ \max_{v \in {\CZ}_m'}
\{ \wh{U}^v \} < 2 n^{\rho+\delta}] = 0 \,,
\end{equation}
for $\wh{U}^v$ of \eqref{dfn:hatU}.
Indeed, $\Q_{s'}[U_n < n^{\rho+\delta}]$ is bounded
by the sum of the probabilities considered in \eqref{eq:As-likely}
and in~\eqref{eq:shift}, and
\begin{align}\label{eq:ubd-Gv}
\sum_{v \in {\mathcal Z}_m'}
\Q_{s'} \Big[
\sum_{j \notin \widehat{U}^v} I^v_j \le -
n^{\rho+\delta} \Big] \,.
\end{align}
Further, conditional on the whole path of the
\abbr{srw} on $\ttorus$
the variables $\{I^v_j, j \notin \widehat{U}^v\}$
retain under $\Q_{s'}$ their
symmetric i.i.d.\ $\pm 1$-valued law, so the
sum of probabilities considered in (\ref{eq:ubd-Gv}) is small 
by the uniform tail bound of~\eqref{eq:unif-tail-bd}.

\medskip
\noindent
{\bf Step III}. Proceeding to prove~\eqref{eq:shift},
let $N_{m,k}^v$ denote the number of \abbr{srw} excursions 
from $\partial \cyl(v,R'_k)$ to $\partial \cyl(v,R_k)$, 
during the first $n_m(2s)$ excursions it made 
from $\partial \cyl(v,R'_m)$ to $\partial \cyl(v,R_m)$. 
We rely on \cite[Section 10]{DPRZ-late} to prove the 
existence \abbrs{whp} (as $m \to \infty$), of
$v \in \CZ_{m}'$ such that for $w$ of \eqref{eq:lbd-par} 
\begin{equation}\label{eq:pre-qual-conc}
N_{m,\rho m}^v < w^2 n_m(2s) \,.
\end{equation}
Indeed, we consider the choice of parameters 
$a=2s$, $\beta =\rho$ and $\gamma = (w-\delta)/\rho$,
in \cite[Section 10]{DPRZ-late} 
and call $v \in \CZ'_m$ an \emph{$(m,\beta)$-pre-qualified point} 
if $N_{m,k}^v \in [\widehat{n}_k - k,\widehat{n}_k+k]$
for all $\beta m \le k \le m-1$ and the value of 
$\widehat{n}_k$ given in \cite[Equation~(10.2)]{DPRZ-late}.
Since our 
choices of $a$, $\beta$ and $\gamma$ result with 
$\wh{n}_{\beta m}=(w-\delta)^2 n_m(2s)(1+o(1)) \gg m$,
we deduce that for some universal $m_0$ and 
all $m \ge m_0$, every $(m,\beta)$-pre-qualified point 
satisfies \eqref{eq:pre-qual-conc}. Further,
in view of \eqref{eq:b-def} and
the \abbr{lhs} of \eqref{eq:lbd-par},
the value of $a^\star$ in \cite[Section 10]{DPRZ-late}
(given the preceding choices of $a$, $\beta$, and $\gamma$), 
is such that 
$(1-\beta)(2-a^\star)=2 b_\rho(w-\delta) \ge 4\delta$.
Thus, letting 
\begin{equation}\label{def:WPQ}
\Wpq (m) := 
|\{v \in \CZ_m': v \mbox{ is } 
(m,\beta)\mbox{-
pre-qualified} \}| \,,
\end{equation}
it suffices to show that 

$$
\lim_{m \to \infty} \P\big( \Wpq (m) \ge K_m^{(1-\beta)(2-a^\star)-\delta} 
\big) = 1 \,,
$$ 
which we get by adapting the proof of \cite[Equation~(10.3)]{DPRZ-late}, 
in replacing the $(m,\beta)$-qualified points in $\CZ_m$ dealt with there, 
by the $(m,\beta)$-pre-qualified points in $\CZ'_m$ considered here. 
To this end, recall that \cite[Equation~(10.3)]{DPRZ-late} is derived 
by showing that:
\newline 
(a) The mean number of such points far exceeds 
$K_m^{(1-\beta)(2-a^\star)-\delta}$.
\newline
(b) Its variance is negligible relative to the square of its mean.
\newline
We further note that the $(m,\beta)$-qualified points of 
\cite[Section 10]{DPRZ-late} are essentially
our $(m,\beta)$-pre-qualified points for which also 
the event $\wh{\bf A}^v_{N^v_{m,\beta m}}$ 
as in the proof of \cite[Lemma 10.1]{DPRZ-late}, occurs.  
In \cite{DPRZ-late} one takes $2s<2$ for which 
the latter event is shown to occur \abbrs{whp}
(see \cite[Equation~(10.8)]{DPRZ-late}).
The probability that $v$ is $(m,\beta)$-qualified, as computed
in \cite[Equation~(10.4)]{DPRZ-late}, is thus within $(1+o(1))$
of the probability that $v$ is $(m,\beta)$-pre-qualified, and it is further 
easy to check that in the pre-qualified case the same formula
applies also when $2 s \ge 2$.

Hence, the same argument as in \cite{DPRZ-late} 
establishes (a) here as well. The key to (b) is
the bound of \cite[Equation~(10.7)]{DPRZ-late} which builds 
on the correlation upper bounds \cite[Equations~(10.5),(10.6)]{DPRZ-late}.
The latter have already been derived there for $(m,\beta)$-pre-qualified points 
and all $s>0$. Thus, \cite[Equation~(10.7)]{DPRZ-late} applies here as well,
apart for a minor difficulty due 
to the fact that we consider only points from the subset $\CZ_m'$ 
of $\CZ_m$. However, $\inf_m \{ m^{12} |\CZ_m'|/|\CZ_m| \}$ 
is positive, and we have already increased by $m^{12}$ 
the value of $K_m=n$, which as seen by following the derivation of
\cite[Equation~(10.7)]{DPRZ-late}, well compensates this effect.

\medskip
\noindent
{\bf Step IV}. Ordering the points of ${\CZ}_m'$ 
in some non-random fashion, we let $v^\star$ denote the first 
$v \in {\CZ}_m'$ satisfying \eqref{eq:pre-qual-conc} (which 
by {\bf Step III} exists \abbrs{whp}).  By definition the points in ${\CZ}_m'$ are
$4 R_{\rho m+4}$-separated and the 
$R_m$-sized cylindrical annulus around each is
of distance $R_{m-1} \ge R_{\rho m}$ from any
(other) point of ${\CZ}_m'$. Consequently, 
$v^\star$ is measurable 
on the $\sigma$-algebra $\CF$ generated 
by the \abbr{srw} path excluding the interior parts of 
excursions between $\partial \cyl(v,R_{\rho m-1})$ 
and $\partial \cyl(v,R_{\rho m})$, for all $v \in \CZ_m'$ 
(namely, each such part has been replaced by its entrance and exit points).
Recalling Definition \ref{def:barU} of the counts 
$\bar{U}^v \le W^v$ of points in $\CZ_{\delta m}(v)$,
we thus get \eqref{eq:shift} by showing that
\begin{equation}
\label{eq:A-vs-holds}
\lim_{n \to \infty}\p({\bar U}^{v_\star} \ge 2 n^{\rho+\delta} |\CF)=1 \,.
\end{equation}
Further, applying \cite[Lemma~2.4]{DPRZ-late} for $r=R_{\rho m-2}$,
$R=R_{\rho m-1}$, $R'=R_{\rho m}$ and the event
$\{ {\bar U}^v \ge 2n^{\rho+\delta}\}$ 
which is measurable on the 
$\sigma$-algebra $\CH^v(\ell)$ 
of the interior parts of first $\ell=w^2 n_m(2s)$ excursions 
for $R_{\rho m}$-sized
cylindrical annulus around $v$, we get the 
conditional result \eqref{eq:A-vs-holds}, once we show that 
for $\theta:=(2-\lambda)(\rho-\delta)-\delta$
and any non-random $v \in \CZ_m'$, as $n \to \infty$, 
\begin{equation}
\label{eq:A-holds}
\p(W^v \ge n^{\theta}) \to 1  \quad 
\text{and} \quad 
\p({\bar U}^{v}  \ge 2 n^{\rho+\delta} \,|\, W^v \ge n^{\theta}) \to 1 \,.
\end{equation}
Proceeding to establish the \abbr{rhs}, let $q_n$ be the minimal value over all 
possible excursion end points and the choice of 
$x \in \ttorus \setminus \cyl(0,R_{\delta m})$,
of the conditional probability that $x$ is not visited 
during the first $z^2 n_m(2s)$ of the \abbr{srw} 
excursions from $\partial \cyl(x,R'_{\delta m})$ 
to $\partial \cyl(x,R_{\delta m})$. Since
points in ${\CZ}_{\delta m}(v) \subset \cyl(v,R'_{\rho m}-R_{\delta m})$ 
are $4 R_{\delta m}$-separated, the variable $W^v$ 
is measurable on the $\sigma$-algebra $\CF^v$ generated 
by the \abbr{srw} path excluding the interior part of the 
excursions between $\partial \cyl(x,R'_{\delta m})$ 
and $\partial \cyl(x,R_{\delta m})$, for all $x=(y,0)$ and
$y \in \CZ_{\delta m}(v)$ 
(namely, each such part has been replaced by its entrance and exit points).
Thus, conditionally on $W^v \ge n^\theta$, the variable 
${\bar U}^v$ stochastically dominates the 
Binomial$(n^{\theta},q_n)$ law. From~\eqref{eq:lbd-par} 
and our choice of $\theta$ we have that
$$
\theta - \alpha(z+3 \delta) \ge \rho + 2 \delta \,,
$$
so by the \abbr{clt} for Binomial random
variables, we get the \abbr{RHS} of~\eqref{eq:A-holds} upon
proving that as $n \to \infty$,
\begin{equation}\label{eq:qn-lbd-by-alpha}
n^{\alpha(z+3\delta)} q_n \to \infty \,.
\end{equation}
In view 
of the \abbr{lbd} of Proposition \ref{prop::prob_hit_point},
we have \eqref{eq:qn-lbd-by-alpha} upon showing that for 
any $M$ large enough, the probability of having at least 
$(z + 2 \delta)^2 \cylE^\star(s)$ excursions from 
$\partial \cyl(x,M h)$ to $\partial \cyl(x,M^2 h)$
during the first $z^2 n_m(2s)$ of the corresponding 
$R_{\delta m}$-excursions, is bounded away from one,
uniformly in $x$, $m \to \infty$, and the possible
excursion end points. Further, the stochastic comparisons
of Lemma \ref{lem::excursion_count_approx} extend to 
our case where $R_0=M R_0'=M^2 h$ as before, but we 
replace $R_j=n^{j/L} R_0=M R_j'$ 
with $R_{\delta m}=(\delta m)!^3=(\delta m)^3 R'_{\delta m}$
and change $\cylE^\star(s)$ in \eqref{eq:zps} to $n_m(2s)$.
Since $n_m(2s)$ is 
within factor $1+o(1)$ of the value of 
$\cylE^\star(s)$ from \eqref{eq:typical-counts} that 
corresponds to $R'=R'_{\delta m}$ and
$R=R_{\delta m} =(\delta m)^3 R'$, 
the desired uniform bound on probabilities follows 
from the convergence $Z_{z,s}(p,p')/\E[Z_{z,s}(p,p')] \to 1$ as 
$m \to \infty$ followed by $M \to \infty$ (while both 
$p=3 \log m/\log R_{\delta m}$ and $p'=\log M/\log R_{\delta m}$ decay to zero).

\medskip
\noindent
{\bf Step V}. We set $\widehat{R}:=R_{\rho m}+R_{\rho m-2}$,
$\widehat{\rho}:=R_{\rho m-1}-R_{\rho m -2}$ and 
$\wh{n}_k(\lambda):= 3 \lambda (k+Am)^2 \log m$, $k=1,2,\ldots$
for $\lambda<2$ and $A>0$ of \eqref{eq:h-val}. Following the
proof of \cite[Lemma 6.1]{DPR-quarter} we call  
$y \in \CZ_{\delta m}(v)$ \emph{$(m,\rho)$-pre-sluggish} 
if for the universal constant $b \ge 4$ found there,
and all $\delta m \le k \le \rho m - b$ the \abbr{srw}
completed within $\pm k$ of $\wh{n}_k(\lambda)$ 
excursions from $\partial \cyl(y,R'_k)$ to $\partial \cyl(y,R_k)$
during its first 
$\wh{n}_{\rho m}(\lambda)$ excursions from
$\partial \cyl(y,\widehat{\rho})$ to $\partial \cyl(y,\widehat{R})$.
It is easy to check that $\wh{n}_{\rho m}(\lambda) = w^2 n_m(2s)$ and 
$\wh{n}_{\delta m}(\lambda) = (z-\delta)^2 n_m(2s) \le z^2 n_m(2s) - \delta m$
(these analogs of \cite[(6.4) and (6.5)]{DPR-quarter} are 
behind our choice of $A$ and $\lambda$ in \eqref{eq:h-val}).
Further, if $y \in \CZ_{\delta m}(v)$ then 
$$
\cyl(y,\wh{\rho}) \subseteq \cyl(v,R'_{\rho m}) \subset
\cyl(v,R_{\rho m}) \subseteq \cyl(y,\wh{R}) \,.
$$
Hence, 
$W^v$ exceeds the number $\wh{W}^v$ of $(m,\rho)$-pre-sluggish $y \in \CZ_{\delta m}(v)$. 
The latter points match the definition made in 
\cite[proof of Lemma 6.1]{DPR-quarter}, upon taking there the 
parameters $\gamma:=\rho$, $\beta=w$ and $\eta:=\delta$.
Utilizing \cite[Lemma~6.2]{DPR-quarter} it is shown
in the course of proving \cite[Lemma~6.1]{DPR-quarter}
that $\wh{W}^v$ concentrates 
\abbrs{whp} around its mean value, which 
for our choice of parameters turns out to be
$R_m^{\theta+\delta-o_m(1)}$
(see \cite[Equations~(6.6) and~(6.7)]{DPR-quarter}).
The values of $\lambda$, $\beta$ and $\gamma$ 
we have here are outside the range considered in
\cite[Lemmas~6.1 and~6.2]{DPR-quarter}, but this 
restriction in \cite{DPR-quarter} is only 
relevant for the \emph{extra requirement} made
in \cite[Equation~(6.10)]{DPR-quarter} that any 
$(m,\gamma)$-pre-sluggish point should be
\abbrs{whp} also $(m,\gamma)$-sluggish. We completely 
abandoned this requirement, so the proof of \cite{DPR-quarter} 
easily extends to yield the \abbr{lhs} 
of \eqref{eq:A-holds}.

\end{document}